\newtheorem{theorem}{Theorem}[section]
\newtheorem{lemma}{Lemma}[section]
\newtheorem{remark}{Remark}[section]
\newtheorem{proposition}{Proposition}[section] 
\newtheorem{corollary}{Corollary}[section]
\newtheorem{assumption}{Assumption}[section]
\newtheorem{example}{Example}[section]
\newtheorem{definition}{Definition}[section]
\newtheorem{result}{Result}[section]
\begin{document}
\title[Unadjusted Barker]{Skew-symmetric schemes for stochastic differential equations with non-Lipschitz drift: an unadjusted Barker algorithm}
\author[Yuga Iguchi, Samuel Livingstone, Nikolas N{\"u}sken, Giorgos Vasdekis and Rui-Yang Zhang]{}

\date{}

\maketitle
\vspace{-10pt}
\begin{center}
    \textsc{Yuga Iguchi} \\
    School of Mathematical Sciences,Lancaster University, Lancaster, U.K..\\
    \texttt{y.iguchi@lancaster.ac.uk} \\
    \bigskip
    \textsc{Samuel Livingstone} \\
    Department of Statistical Science, University College London,  U.K.\\
    \texttt{samuel.livingstone@ucl.ac.uk} \\
    \bigskip
    \textsc{Nikolas N{\"u}sken} \\
	Department of Mathematics, King's College London, U.K. \\ \texttt{nikolas.nusken@kcl.ac.uk}\\
    \bigskip
    \textsc{Giorgos Vasdekis} \\
	School of Mathematics, Statistics and Physics, Newcastle University, U.K. \\ \texttt{giorgos.vasdekis@newcastle.ac.uk}\\
    \bigskip
    \textsc{Rui-Yang Zhang} \\
	School of Mathematical Sciences, Lancaster University, U.K. \\
    \texttt{r.zhang26@lancaster.ac.uk}
\end{center}

\maketitle

\begin{abstract}
    We propose a new simple and explicit numerical scheme for time-homogeneous stochastic differential equations. The scheme is based on sampling increments at each time step from a skew-symmetric probability distribution, with the level of skewness determined by the drift and volatility of the underlying process.  We show that as the step-size decreases the scheme converges weakly to  the diffusion of interest. We then consider the problem of simulating from the limiting distribution of an ergodic diffusion process using the numerical scheme with a fixed step-size.  We establish conditions under which the numerical scheme converges to equilibrium at a geometric rate, and quantify the bias between the equilibrium distributions of the scheme and of the true diffusion process.  Notably, our results do not require a global Lipschitz assumption on the drift, in contrast to those required for the Euler--Maruyama scheme for long-time simulation at fixed step-sizes.  Our weak convergence result relies on an extension of the theory of Milstein \& Tretyakov to stochastic differential equations with non-Lipschitz drift, which could also be of independent interest.  We support our theoretical results with numerical simulations.
    \end{abstract}
{\bf Keywords: }Stochastic differential equations, Skew-symmetric distributions, Sampling algorithms, Markov Chain Monte Carlo.
\textbf{MSC2020 subject classifications:} 60H35, 65C05, 65C30 ,  65C40.

\section{Introduction}

We consider the problem of sampling from the equilibrium distribution of an ergodic diffusion process, which is ubiquitous in areas such as molecular simulation \citep{stoltz2010free,faulkner2024sampling}, Bayesian statistical inference and machine learning \citep{green2015bayesian,andrieu2003introduction}, as well as many others (e.g. \citep{bortoli.thornton.heng.doucet:21, sant.jenkins.koskela.spano:23}).  The classical Euler--Maruyama numerical scheme does not always approximate the dynamics of the process to a desired degree of accuracy \citep{roberts1996exponential}.  Lack of numerical stability commonly arises when the dynamics are in some sense stiff, in particular when the drift is not globally Lipschitz. The problem is discussed in \cite{higham.mao.stuart:03} and can cause issues in various application domains, which has motivated the search for alternative explicit schemes that are equally straightforward to implement \citep{hutzenthaler2012strong,sabanis2013note,mao2015truncated}. 

In this work we study explicit numerical integration schemes based on the simple idea of sampling an increment at each time step from a distribution that is skewed in the direction of the drift vector.  The scheme is inspired by a recently introduced sampling algorithm in Bayesian statistics known as the Barker proposal \citep{livingstone2022barker,hird2020fresh,vogrinc2022optimal}, which is a Metropolis--Hastings scheme in which proposals are generated at each step by approximating the dynamics of a particular Markov jump process. A Metropolis--Hastings filter is then applied to ensure that the resulting Markov chain has the correct equilibrium distribution.  The scheme is known to be extremely stable in cases when alternative approaches (often based on approximating Langevin diffusions using the Euler--Maruyama numerical scheme) can perform poorly. The starting point for this work was the realisation that the same proposal mechanism can be viewed as approximating the dynamics of an overdamped Langevin diffusion, a well-studied object about which much is known regarding long-time behaviour and stability properties (e.g. \citep{roberts1996exponential,bolley2012convergence}, see also \cite{bourabee.vandeneijnden:18} for an earlier work in this direction).

To establish this connection more formally, in Section \ref{sec:scheme} we define a general class of skew-symmetric numerical integrators for uniformly elliptic stochastic differential equations. Choosing a particular element of the class and targeting the overdamped Langevin diffusion recovers the Barker algorithm (minus the Metropolis--Hastings filter). We later refer to this approach as the \textit{unadjusted Barker algorithm}  or the \textit{Barker proposal} when applied to Langevin diffusions in Section \ref{sec:experiments}.  In Section \ref{sec:general.convergence:00} we prove that, under suitable conditions, the schemes defined in Section \ref{sec:scheme} converge weakly to the true diffusion as the step-size decreases with weak order 1.  To do this we establish a regularity result, Theorem \ref{Generalisation.theorem:1}, which allows the general convergence theorem of Milstein \& Tretyakov \cite[Theorem 2.2.1]{milstein.tretyakov:21} to be extended to a class of stochastic differential equations with non-Lipschitz drift.  In principle this theorem can also be applied to establish weak convergence of other numerical schemes, and so we believe it to be of independent interest. In Section \ref{sec:ergodicity} we consider simulation over long time scales. We prove that, under suitable assumptions, the skew-symmetric scheme has an equilibrium distribution to which it converges at a geometric rate, and additionally we characterise the bias incurred when computing expectations under the equilibrium distribution of the numerical scheme versus the true diffusion process as a function of the integrator step-size. Notably, these long-time behaviour results do not require a global Lipschitz condition on the drift, whereas the Euler--Maruyama scheme is known to be transient when the drift is not globally Lipschitz \citep{roberts1996exponential}, rendering it unsuitable for long-time simulation in this setting. This justifies to some extent the stable performance of the skew-symmetric approach, which has the property of naturally preventing large values of the drift from disrupting the stability of the numerical scheme, akin to approaches such as the tamed and truncated Euler--Maruyama methods introduced in \cite{hutzenthaler2012strong} and \cite{mao2016convergence}. In Section \ref{sec:experiments} we illustrate the performance of the scheme on a range of benchmark examples, via numerical simulations, before concluding in Section \ref{conclusion:00} with a discussion. Proofs of the main results are presented in the appendices. The supplementary material contains a conceptual comparison to the tamed and adaptive Euler approaches, and some more simulation studies.

\subsection{Notation} 
For any function $f: \mathbb{R}^{2d} \rightarrow \mathbb{R}$ and any  $(x,\nu) \in \mathbb{R}^{2d}$, with $x=(x_1,...,x_d) \in \mathbb{R}^d$ and $\nu=(\nu_1,...,\nu_d) \in \mathbb{R}^d$, we write $\partial_if(x,\nu)$ to denote the partial derivative with respect to $x_i$ and $\partial^{
\nu}_if(x,\nu)$ for that with respect to $\nu_i$. The notation $f^{(k)}$ will indicate any derivative of order $k$ with respect to components of potentially both $x$ and $\nu$. The gradient operator applied to functions $g: \mathbb{R}^d \rightarrow \mathbb{R}$ will be denoted by $\nabla$, and the Hessian by $\nabla^2$.

For a matrix-valued function $\sigma:\mathbb{R}^d \rightarrow \mathbb{R}^{d \times d}$ evaluated at $x \in \mathbb{R}^d$ we write $\sigma_{i,j}(x)$ to denote the element in the $i$th row and $j$th column. Similarly the $i$th element of a vector-valued function $\mu$ evaluated at $x$ is denoted $\mu_i(x)$. The $d \times d$ identity matrix is written $I_d$. 

Let $\| \cdot \|$ denote the Euclidean norm on $\mathbb{R}^d$ when applied to vectors or the Frobenius norm when applied on matrices on $\mathbb{R}^{d \times d}$. More generally, we write 
\begin{equation}
\Vert A \Vert_F := \sqrt{\sum_{i_1,\ldots,i_n = 1}^d (A^2_{i_1,\ldots,i_n})}, \qquad A \in \mathbb{R}^{(d^n)},
\end{equation}
to denote the Frobenius norm of a multidimensional array,
which reduces to the Euclidean norm if $n=1$ and to the standard matrix Frobenius norm if $n=2$.

The supremum norm on $\mathbb{R}^d$ is denoted $\| \cdot \|_{\infty}$. For two vectors $c=(c_1,...,c_d)$ and $c'=(c_1',...,c_d')$ in $\mathbb{R}^d$, we write $c \cdot c'$ to denote their scalar product, and $c \ast c' := (c_1c'_1,c_2c'_2,...,c_dc'_d)$ to denote their element-wise product. For a matrix $A \in \mathbb{R}^{d \times d}$ and a vector $c \in \mathbb{R}^d$, $A c$ denotes the usual multiplication between matrix and vector, and $A^T$ denotes the transpose of $A$. For a probability measure $\pi$ on $\mathbb{R}^d$, $\langle \cdot , \cdot \rangle_{\pi}$ denotes the usual inner product in $L^2(\pi)$.

Let $C^l(\mathbb{R}^d)$ denote the set of $l$-times differentiable functions $f: \mathbb{R}^d \rightarrow \mathbb{R}$, with continuous derivatives up to order $l$.
We also denote
\begin{align*}
C_P^{l,m}(\mathbb{R}^d) = \left\{ f \in C^l(\mathbb{R}^d): \,\, \exists C>0 \text{ s.t. } |f^{(k)}(x)| \le C(1 + \|x\|^m), \forall x \in \mathbb{R}^d, k \leq l  \right\},
\end{align*}
as well as
\begin{equation}\label{CPl.definition:1}
C^l_P(\mathbb{R}^d)=\bigcup_{m \in \mathbb{N}}C^{l,m}_P \qquad \text{and} \qquad     C^{\infty}_P(\mathbb{R}^d)=\bigcap_{l \in \mathbb{N}}C^{l}_P.
\end{equation}

For $a,b \in \mathbb{R}$, we write $a \wedge b := \min\{ a ,b \}$ and $a \vee b := \max\{ a , b \}$.

Finally, $\mathbb{E}_x$ denotes expectation with respect to the law of a stochastic process conditioned on the initial value $x$. $\mathbb{I}_A$ denotes the indicator function of the set $A$.

\section{A skew-symmetric numerical scheme}
\label{sec:scheme}

\subsection{Intuition in one dimension}\label{one.dimension:00}
We consider the one-dimensional stochastic differential equation (SDE)
\begin{equation}\label{eqn:SDE}
\mathrm{d}Y_t = \mu(Y_t) \, \mathrm{d}t + \sigma(Y_t) \, \mathrm{d}W_t,\qquad \qquad Y_0 = x_0,
\end{equation}
where $\mu:\mathbb{R} \rightarrow \mathbb{R}$ and $\sigma: \mathbb{R} \rightarrow \mathbb{R}_{>0}$ are drift and volatility coefficients of sufficient regularity (see Section \ref{sec:results} below), $x_0 \in \mathbb{R}$ is a deterministic initial condition, and $(W_t)_{t \ge 0}$ is a standard Brownian motion. Numerical approximations of $(Y_t)_{t \ge 0}$ often rely on the \emph{Euler--Maruyama} scheme \citep{kloeden1992stochastic}
\begin{equation}
\label{eq:EM}
 X^{n+1} = X^n + \mu(X^n) \Delta t   + \sqrt{\Delta t }  \sigma(X^n) \nu^n, \qquad \qquad X^0 = x_0, 
\end{equation}
where $\Delta t > 0$ is a step size, and $(\nu^n)_{n \in \mathbb{N}}$ is an i.i.d. sequence of standard Gaussian random variables.

In this paper, we suggest the alternative iterative scheme
\begin{equation}
\label{eq:skew scheme}
X^{n+1} = X^{n} +  b^{n} \sqrt{\Delta t} \sigma(X^{n})   \nu^{n}, \qquad \qquad X^0 = x_0,
\end{equation}
with $\nu^n$ referring to Gaussian increments as before. The auxiliary random variables $b^n$ are binary,
\begin{equation}
b^{n} = \begin{cases}
	+1 &\quad \text{with probability } p(X^{n}, \nu^n), \\
	-1 &\quad \text{otherwise,}
\end{cases}
\end{equation}
introducing random sign flips of $\nu^n$ in \eqref{eq:skew scheme}, moderated by an appropriate (differentiable) function $p:\mathbb{R} \times \mathbb{R} \rightarrow [0,1]$ yet to be specified. Notice that $b^n$ and $\nu^n$ are not independent, and hence in general $\mathbb{E}[b^n \nu^n] \neq 0$. It is therefore plausible that by a judicious choice of $p$, the additional flips in \eqref{eq:skew scheme} may reproduce the drift $\mu(X^n) \Delta t $ in \eqref{eq:EM}, up to $\mathcal{O}(\Delta t^2)$. Indeed, appropriate Taylor series expansions (see Appendix \ref{proof.of.semigroup.expansion:00}) show that \eqref{eq:skew scheme} captures the dynamics \eqref{eqn:SDE} in the weak sense as $\Delta t  \rightarrow 0$, provided that $p$ satisfies 
\begin{equation}
\label{eq:p condition 1d}
\partial_{\nu} \Big\vert_{\nu = 0} p(x,\nu) = \frac{\mu(x)}{2 \sigma(x)}\sqrt{\Delta t},    
\end{equation}
for all $x \in \mathbb{R}$.
We provide precise statements in Section \ref{sec:results}.

\begin{remark}[Robustness]
As alluded to above, Taylor series expansions show that 
\begin{equation}
\mathbb{E}[X^{n+1} - X^n | X^n] = \mu(X^n) \Delta t + \mathcal{O}(\Delta t^2),
\end{equation}
both for the Euler-Mayurama scheme \eqref{eq:EM} and the skew-symmetric scheme \eqref{eq:skew scheme}, assuming \eqref{eq:p condition 1d}. This observation makes it plausible that some properties of \eqref{eq:EM} might hold as well for \eqref{eq:skew scheme}, and we confirm this intuition for the weak order (Theorem \ref{theorem.weak.convergence:0}) and approximation accuracy of invariant measures (Theorem \ref{thm.bias.equilibrium:1}). To expose qualitative differences between \eqref{eq:EM} and \eqref{eq:skew scheme}, it is instructive to consider the expected squared jump sizes
\begin{equation}
\label{eq:jump size EM}
\mathbb{E}[(X^{n+1} - X^n)^2 | X_n] = \mu^2(X^n) \Delta t^2 + \sigma^2(X^n) \Delta t
\end{equation}
for the Euler-Mayurama scheme, and
\begin{equation}
\label{eq:jump size skew}
\mathbb{E}[(X^{n+1} - X^n)^2 | X^n] =  \sigma^2(X^n) \Delta t
\end{equation}
for the skew-symmetric scheme. Indeed, \eqref{eq:jump size EM} is reflective of the fact that \eqref{eq:EM} is generally unstable for non-Lipschitz drifts \citep{hutzenthaler2012strong,sabanis2013note}. In contrast, \eqref{eq:jump size skew} shows that the expected squared jump size (but not the direction) for \eqref{eq:skew scheme} does not depend on $\mu(X^n)$, hence suppressing potential instabilities due to unbounded drifts. Both our theoretical results (allowing for polynomially growing drifts that are not globally Lipschitz) and our numerical experiments in Section \ref{sec:experiments} support the intuition that the skew-symmetric scheme \eqref{eq:skew scheme} has favourable robustness properties in the presence of strong unbounded drifts. 
\end{remark}
\subsection{Relationships to skew-symmetric distributions and sampling algorithms}\label{subsection:skew-symmetric.mcmc:00}

Whilst the relationship \eqref{eq:p condition 1d} %is -- at least formally -- sufficient to
will ensure weak convergence of \eqref{eq:skew scheme} towards \eqref{eqn:SDE}, it clearly does not uniquely determine $p$ given $\mu$ and $\sigma$. To explore and restrict the design space of $p:\mathbb{R} \times \mathbb{R} \rightarrow [0,1]$, we make the following two observations:
\begin{enumerate}
 \item The density of $\xi:=X^{n+1}-X^{n}$ given $X^n$ in (\ref{eq:skew scheme}) can be written as 
\begin{align*}
   \hspace{-1cm} \gamma(X^n, \xi)=& \left( p\left(X^n,\frac{\xi}{\sqrt{\Delta t} \sigma(X^n)}\right) + 1 - p\left(X^n,\frac{-\xi}{\sqrt{\Delta t} \sigma(X^n)} \right) \right)  \frac{1}{\sqrt{\Delta t} \sigma(X^n)} \phi\left( \frac{\xi}{\sqrt{\Delta t} \sigma(X^n)} \right),
\end{align*}
where $\phi$ is the density of a standard Normal distribution. Therefore, $p$ influences the law of the process only through the quantity $p(X^n,\nu)+1-p(X^n,-\nu)$. We observe that, if one replaces $p$ by the function $\tilde{p}(x,\nu):=\left[p(x,\nu)+1-p(x,-\nu)\right]/2$, we would induce the same law for $\xi$, while $\tilde{p}(x,0)=1/2$. Therefore, we can without loss of generality impose the further constraint \begin{equation}\label{p.0.1d:1}
p(x,0) = \frac{1}{2}, \qquad \text{for all} \,\,\, x \in \mathbb{R}.
\end{equation}

\item 
Under the assumption that $p(X_n,\cdot)$ is a cumulative distribution function (CDF)\footnote{that is, $\nu \mapsto p(X_n,\nu)$ is right-continuous and monotone increasing, with limits $\lim_{\nu \rightarrow - \infty} p(x,\nu) = 0$ and $\lim_{\nu \rightarrow - \infty} p(x,\nu) = 1$.} of a centred and symmetric real-valued random variable, the product $b_n \nu_n$ is known as a skew-symmetric random variable (see e.g. \citep{azzalini2013skew}), to be thought of as a biased modification of a standard Gaussian.
\end{enumerate}

Based on these observations, we could consider functions $p$ of the form given by the following proposition. 

\begin{proposition}\label{cdf.p:0}
Let $F$ be the CDF of a one dimensional distribution with continuous density $f$, which is symmetric under the reflection $x \mapsto -x$, and which satisfies $f(0) \neq 0$. Then \begin{equation}\label{cdf.p:1}
p(x,\nu)=F\left(\nu \Delta t^{1/2} \frac{\mu(x)}{2f(0)\sigma(x)} \right)
\end{equation} satisfies (\ref{eq:p condition 1d}) and \eqref{p.0.1d:1}. 
\end{proposition}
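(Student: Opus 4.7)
The plan is to verify the two required identities directly by elementary computations, since the definition \eqref{cdf.p:1} has essentially been engineered to make them hold. There is no real obstacle; the only ingredients I need are the symmetry of $f$ and the chain rule.

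First I would check \eqref{p.0.1d:1}. Setting $\nu = 0$ in \eqref{cdf.p:1} gives $p(x,0) = F(0)$. Since $f$ is continuous and symmetric under $x \mapsto -x$, a standard change of variables argument gives $\int_{-\infty}^0 f(u)\,\mathrm{d}u = \int_0^\infty f(u)\,\mathrm{d}u$, and because these sum to $1$ we conclude $F(0) = 1/2$. Hence $p(x,0) = 1/2$ for every $x \in \mathbb{R}$.

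Next I would verify \eqref{eq:p condition 1d}. By the chain rule, and using that $F' = f$,
\begin{equation*}
\partial_\nu p(x,\nu) = f\!\left(\nu \Delta t^{1/2}\,\frac{\mu(x)}{2f(0)\sigma(x)}\right) \cdot \Delta t^{1/2}\,\frac{\mu(x)}{2f(0)\sigma(x)}.
\end{equation*}
Evaluating at $\nu = 0$ and using $f(0) \neq 0$ to cancel the factor $f(0)$ in the denominator yields
\begin{equation*}
\partial_\nu\Big|_{\nu=0} p(x,\nu) = f(0) \cdot \Delta t^{1/2}\,\frac{\mu(x)}{2 f(0)\sigma(x)} = \frac{\mu(x)}{2\sigma(x)}\sqrt{\Delta t},
\end{equation*}
as required.

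Since both conditions follow from one-line computations, the writeup is essentially a single display per identity; the hypothesis $f(0) \neq 0$ is used exactly once, to justify the cancellation above, and the symmetry of $f$ is used exactly once, to identify $F(0) = 1/2$. No step is delicate, so I would keep the proof correspondingly short.
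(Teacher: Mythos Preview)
Your proposal is correct and follows essentially the same approach as the paper: use symmetry of $f$ to get $F(0)=1/2$, then apply the chain rule with $F'=f$ and evaluate at $\nu=0$. The paper's proof is just slightly terser, but the argument is identical.
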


\begin{proof}[Proof of Proposition \ref{cdf.p:0}]
The symmetry of $f$ implies $p(x,0)=F(0)=1/2$. Using the chain rule and noting that $F'(y)=f(y)$ gives
    \begin{equation*}
        \partial_{\nu}p(x,\nu)=f\left(\nu \Delta t^{1/2} \frac{\mu(x)}{2f(0)\sigma(x)} \right) \Delta t^{1/2} \frac{\mu(x)}{2f(0)\sigma(x)},
    \end{equation*}
    and setting $\nu=0$ gives (\ref{eq:p condition 1d}).
\end{proof}

\begin{example}\label{example.cdf.p:0}
\begin{enumerate}
\item \hspace{-0.18cm} \ The \emph{Logistic} distribution has density {${f(x)\!=e\!^{-x}(1\!+\!e^{-x})^{-2}}$} and  CDF $F(x)=\left( 1+ e^{-x} \right)^{-1}$, so that (\ref{cdf.p:1}) results in
\begin{equation*}
p(x,\nu)=\frac{1}{1+\exp\left\{ -2\Delta t^{1/2}\nu \frac{\mu(x)}{\sigma(x)} \right\} }.
\end{equation*}
\item \
The \emph{standard Normal} distribution
with density $f(x)= (2\pi)^{-1/2}e^{ -x^2/2 }$ and CDF $\Phi$ induces   \begin{equation*}
p(x,\nu)=\Phi\left( \sqrt{\frac{\pi}{2}}\Delta t^{1/2}  \nu \frac{\mu(x)}{\sigma(x)} \right).
  \end{equation*}
   \end{enumerate}
\end{example}

Although restricting to CDFs is not strictly necessary in the context of \eqref{eq:skew scheme}, we believe that relaxing this assumption is unlikely to lead to numerical benefits. We emphasise, however, that our results hold beyond this class of choices.

The work of this article can also find applications in the area of sampling algorithms, for which the goal is to sample from or compute expectations with respect to a probability distribution $\pi$ of interest. A common approach is to design a diffusion with equilibrium distribution $\pi$ and then simulate the process numerically over sufficiently long time scales. This can be done in such a way that the equilibrium distribution is preserved by applying a Metropolis--Hastings filter at each time-step, leading to a Metropolis-adjusted sampling algorithm (e.g. \cite{roberts1996exponential}). It can also be done approximately without applying this filter, leading to an unadjusted sampling algorithm (e.g. \cite{durmus2017nonasymptotic}).  In both cases it is important to understand how quickly the numerical process approaches equilibrium, and in the unadjusted case it is also important to understand the level of bias introduced by the numerical discretisation at equilibrium. 

The unadjusted Langevin algorithm (ULA) is a state-of-the art example in which the overdamped Langevin diffusion is discretised using the Euler--Maruyama scheme \citep{durmus2017nonasymptotic}. It is well-known that when the drift is not globally Lipschitz, as will be the case when $\pi$ has sufficiently light tails, then ULA does not preserve the equilibrium properties of the diffusion, and instead produces a transient Markov chain.  By contrast, the results of Section \ref{sec:ergodicity} show that using the skew-symmetric numerical scheme produces an algorithm that converges to equilibrium at a geometric rate, and for which the equilibrium distribution is close to $\pi$ for sufficiently small choice of numerical step-size. We call the scheme the \emph{unadjusted Barker algorithm} in reference to both ULA and the Barker proposal of \cite{livingstone2022barker}. An alternative approach using the tamed Euler scheme is discussed in \cite{brosse2019tamed}.  We note that a version of the algorithm using stochastic gradients has recently been proposed in \cite{mauri2024robust}.

\subsection{Skew-symmetric schemes in general dimension}
Next we turn to the problem of simulating a multi-dimensional SDE of the form 
\begin{equation}\label{multi.dimensional.sde:0}
\mathrm{d}Y_t=\mu(Y_t)\,\mathrm{d}t+\sigma(Y_t)\,\mathrm{d}W_t,\quad  Y_0=x \in \mathbb{R}^d,
\end{equation}
where $\mu=(\mu_1,...,\mu_d):\mathbb{R}^d \rightarrow \mathbb{R}^d$ and $\sigma=(\sigma_{i,j})_{i,j \in \{ 1,...,d \}}: \mathbb{R}^d \rightarrow \mathbb{R}^{d \times d}$ specify the drift and volatility, respectively, and $(W_t)_{t \geq 0}=(W^1_t,...,W^d_t)_{t \geq 0}$ is a $d$-dimensional standard Brownian motion. Let us also assume that we have chosen a family of functions $p_i: \mathbb{R}^d \times \mathbb{R}^d \rightarrow (0,1)$ for all $i \in \{ 1,...,d\}$; those will be required  to satisfy appropriate modifications of \eqref{eq:p condition 1d} and \eqref{p.0.1d:1}. For details, see Section \ref{sec:results} below, where we also provide precise assumptions on the growth and regularity of $\mu$ and $\sigma$.

The idea behind the multi-dimensional skew-symmetric scheme is to view the columns of the volatility matrix $\sigma$ as a basis of $\mathbb{R}^d$ and update using a skew-symmetric jump (as introduced in Section \ref{one.dimension:00}) in every direction given by the basis. In particular, for a  fixed step size $\Delta t \in (0,1)$, we iteratively construct $X^0, X^1 ,...,$ as follows. First we set $X^0=x$ according to the initial condition in \eqref{multi.dimensional.sde:0}. Then, for any $n \in \mathbb{N}$, we generate $\nu^n=(\nu^n_1,...,\nu^n_d) \sim N(0,I_d)$, and for all $i \in \{ 1,...,d \}$ a random variable
\begin{equation}
b^{n}_i = \begin{cases}
	+1 &\quad \text{with probability } p_i(X^{n}, \nu^n), \\
	-1 &\quad \text{otherwise.}
\end{cases}
\end{equation}
There is freedom in how to choose the probabilities $p_i$. As will be seen later, one practical way to do so is to set
\begin{equation}\label{first.cdf.p.high.dimensions:1}
    p_i(x,\nu)=F\left(\nu_i \Delta t^{1/2} \frac{1}{2f(0)} \Psi_i(x) \right),
\end{equation} 
where $\Psi(x)=\sigma^{-1}(x) \mu(x)$ and $f,F$ are respectively the probability density and cumulative distribution function of a real-valued random variable such that $f$ is symmetric around zero with $f(0) \neq 0$.
We then set $b^n:=(b^n_1,...,b^n_d)$, and 
\begin{equation}\label{general.update.scheme:1}
X^{n+1}=X^n+ {\Delta t}^{1/2} \ \sigma(X^n) (b^n \ast \nu^n),
\end{equation}
recalling that $\ast$ denotes element-wise multiplication. A more intuitive way to understand the update (\ref{general.update.scheme:1}) is to write it as
\begin{equation}\label{general.update.scheme:2}
  X^{n+1}=X^n+ \sum_{i=1}^d b^n_{i} \ \sigma_{i}(X^n) \nu_i^n{\Delta t}^{1/2},
\end{equation}
where $\sigma_i$ is the $i$th column of the volatility matrix $\sigma$. That is, we see the columns of $\sigma$ as a basis of $\mathbb{R}^d$, and for every element $i$ of the basis we suggest a normal jump $N(0,\Delta t)$ in the direction of the element. We then accept that jump with probability $p_i$ or reflect it in the opposite direction with probability $1-p_i$.

The values of $X^1,X^2,...$ can then be taken as approximate samples from the law of $Y_{\Delta t}, Y_{2 \Delta t},...$. The scheme is presented in algorithmic form as Algorithm \ref{skew.symmetric.algorithm:001}. Assumptions under which the scheme is asymptotically exact in the weak sense as the step-size decreases are presented in the next sub-section.

\begin{algorithm}
{\bf Input}: Initial point $x \in \mathbb{R}^d$, number of iterations $N \in \mathbb{N}$, step size $\Delta t >0$, probability functions $p_i:\mathbb{R}^d \times \mathbb{R}^d \rightarrow (0,1)$.

{\bf Goal}: Approximate samples from $Y_{\Delta t}, Y_{2 \Delta t}, ..., Y_{N \Delta t}$.

    {\bf Output}: $X^1, X^2, ..., X^N$.

    \begin{itemize}
        \item \ Set $X^0=x$.
        \item \ For $n \in \{0,...,N-1\}$, repeat
        \begin{enumerate}
            \item \ Sample $\nu=(\nu_1,...,\nu_d) \sim N(0,I_d)$.
            \item \ For all $i \in \{ 1,...,d \}$, set \begin{equation*}
                b_i = \begin{cases}
	                   +1 &\quad \text{with probability } p_i(X^{n}, \nu), \\
	                    -1 &\quad \text{otherwise,}
                    \end{cases}
                   \end{equation*}
                and set $b=(b_1,...,b_d)$. 
            \item \ Set  $X^{n+1}=X^n+\Delta t^{1/2}  \sigma(X^n) (b \ast \nu)$.
        \end{enumerate}
    \end{itemize}
    \caption{Skew-symmetric scheme}\label{skew.symmetric.algorithm:001}
\end{algorithm}

\begin{remark}\label{remark.q.notation:1}
Let 
$\xi=(\xi_1,...,\xi_d)=\Delta t^{1/2}\nu$, with $\nu \sim N(0,I_d)$. In some cases it will be more convenient to view each $p_i(x,\nu)$ as a function of the current position $x$ and the proposed Brownian increment $\xi$. We therefore introduce the functions $q_i$, where for all $i \in \{ 1,...,d  \}$, $x \in \mathbb{R}^d$ and $\xi \in \mathbb{R}^d$
\begin{equation}\label{notation.q:001}
    q_i(x,\xi):=p_i(x,\nu),
\end{equation}
with the convention that $\xi=\Delta t^{1/2} \nu$ and $\nu = \Delta t^{-1/2} \xi$.
\end{remark}

\subsection{Assumptions and main results}\label{sec:results}

We begin by stating our main assumptions on the drift and volatility coefficients in the SDE \eqref{multi.dimensional.sde:0} that will be imposed throughout the document. 

\begin{assumption}[Drift]
\label{ass.drift.general:1}
For all $i \in \left\{ 1,\dots , d \right\}$, we have $\mu_i \in C^{\infty}_P(\mathbb{R}^d)$. Furthermore, the drift satisfies a \emph{one-sided Lipschitz condition:} There exists $C>0$ such that
\begin{equation}\label{one.sided.lipschitz.assump:2}
\left( \mu(y) -  \mu(x) \right) \cdot \left( y-x \right) \leq C^2\left( 1+ \| y-x \|^2 \right),
\end{equation}
for all $x,y \in \mathbb{R}^d$.
\end{assumption}

\begin{assumption}[Volatility]\label{ass.volatility.general:1}
\begin{enumerate}
    \item \ For all $i,j \in \left\{ 1,\dots, d \right\}$, we have $\sigma_{i,j} \in C^{\infty}_P(\mathbb{R}^d)$ 
   Furthermore, there exists $C>0$ such that for all $x, y \in \mathbb{R}^d$
  \begin{equation}\label{volatility.lipschitz:1}
      \| \sigma(y)-\sigma(x)\|_F^2 \leq C^2\left(\| y-x \|^2 \right).
  \end{equation}

\item \ For all $x \in \mathbb{R}^d$, $\sigma(x)$ is invertible.

    \end{enumerate}
\end{assumption}

%We note here that Assumption \ref{ass.volatility.general:1}.2 is not used for Theorem \ref{Generalisation.theorem:1}, stated in Section \ref{sec:weak-convergence}. Aside from this theorem, Assumption \ref{ass.volatility.general:1}.2 will be made throughout this document.

\begin{remark}
\begin{enumerate}
    \item  \ The one-sided Lipschitz condition (\ref{one.sided.lipschitz.assump:2}) along with (\ref{volatility.lipschitz:1}) and the local Lipschitz property of $\mu$ and $\sigma$ (which is implied by the assumption that they are $C^{\infty}$) imply that the stochastic differential equation (\ref{multi.dimensional.sde:0}) admits a solution for all $t \in [0,+\infty)$ (e.g. \cite{krylov1991simple}, or Lemma 3.2 of \cite{higham.mao.stuart:03}). 
    \item \ The one-sided Lipschitz condition on the drift $\mu$ is weaker than the global Lipschitz condition requiring that there exists $C>0$ such that for all $x,y \in \mathbb{R}^d$ 
    \begin{equation}\label{ass.fully.lipschitz:1}
        \| \mu(x)-\mu(y) \| \leq C \| x-y \|.
\end{equation}
Indeed, \eqref{one.sided.lipschitz.assump:2} allows for fast growth of $\Vert \mu(x) \Vert$ as $\Vert x\Vert \rightarrow \infty$, as long as $\mu$ is ``inward-pointing'' or ``confining''.
   \end{enumerate}
\end{remark}

\begin{example}[Langevin Dynamics]\label{langevin.running:1}
    Our running example will be the overdamped Langevin dynamics
    \begin{equation}\label{langevin.running:2}
        \mathrm{d}Y_t = \nabla \log \pi(Y_t)\,\mathrm{d}t + \sqrt{2}\, \mathrm{d}W_t,
    \end{equation}
    for some probability density $\pi$ on $\mathbb{R}^d$. Under mild regularity conditions (see Theorem 2.1 of \cite{roberts1996exponential}) this diffusion is known to converge as $t \rightarrow \infty$ to the law induced by $\pi$, in total variation distance. Numerically discretising the diffusion has traditionally given rise to well-known sampling algorithms (e.g. MALA  \citep{roberts1996exponential}). 

   When the probability density $\pi$ has very light tails, the one-sided Lipschitz condition might still hold, but the (global) Lipschitz condition (\ref{ass.fully.lipschitz:1}) does not. A natural example is when $d=1$ and for some $\epsilon,c >0$
\begin{equation*}
\pi(x)=\frac{1}{Z}\exp\{ -c|x|^{2+\epsilon} \},
\end{equation*}
where $Z=\int_{\mathbb{R}^d} \exp\{ -c|y|^{2+\epsilon} \}\,\mathrm{d}y$, i.e. when $\pi$ has lighter tails than any Gaussian distribution.
\end{example}

Furthermore, throughout we make the following assumption on each $p_i$. 

\begin{assumption}\label{assumption.p.derivative:001}
    For all $i \in \{ 1,...,d \}$, we have that $p_i \in C^4(\mathbb{R}^d \times \mathbb{R}^d)$ as well as $0 < p_i(x,\nu) <1$ for all $x,\nu \in \mathbb{R}^d$. Moreover, writing
\begin{equation}\label{def:Psi.notation:1}
        \Psi(x)=\sigma^{-1}(x) \mu(x)
    \end{equation}
    we have
\begin{equation}\label{condition.p.derivative:0}
p_i(x,0)=\frac{1}{2} \qquad \text{and} \qquad 
\partial^{\nu}_ip_i(x,0) = \frac{1}{2} \Psi_i(x)  \Delta t^{1/2},
\end{equation}
for all $x \in \mathbb{R}^d$.
\end{assumption}

Assumption \ref{assumption.p.derivative:001} naturally generalises the one-dimensional conditions (\ref{p.0.1d:1}) and (\ref{eq:p condition 1d}). In particular, the second equation in (\ref{condition.p.derivative:0}) is essential for the skew-symmetric scheme to approximate the diffusion.

 \begin{remark}
 \begin{enumerate}
 \item \ Using the notation introduced in Remark \ref{remark.q.notation:1}, and using the chain rule,  \eqref{condition.p.derivative:0} becomes 
\begin{equation}\label{condition.q.derivative:0}
q_i(x,0)=\frac{1}{2}, \qquad \text{and} \qquad 
\partial^{\xi}_iq_i(x,0) = 
\frac{1}{2} \Psi_i (x).
\end{equation}
 
 \item \ As in Proposition \ref{cdf.p:0},  let $F$ be the cumulative distribution function of a one-dimensional distribution with continuous density $f$ which is symmetric with respect to zero and with $f(0) \neq 0$. Then the choice \begin{equation}\label{cdf.p.high.dimensions:1}
    p_i(x,\nu)=F\left(\nu_i \Delta t^{1/2} \frac{1}{2f(0)} \Psi_i(x) \right)
\end{equation} 
 satisfies \eqref{condition.p.derivative:0}.
\item \ It is possible to adjust our scheme to accommodate the case where the dimensions of the state $Y$ and the Brownian motion are different. For instance, consider a $d$-dimensional elliptic stochastic differential equation driven by $m$-dimensional standard Brownian motion with its volatility matrix $\sigma: \mathbb{R}^d \to \mathbb{R}^{d \times m}$, where $a (x) \equiv \sigma (x) \sigma (x)^\top$ is assumed to be positive definite uniformly in $x \in \mathbb{R}^d$. Then, the skew-symmetric scheme can be designed in the same manner as above, with the sign flipping for each coordinate of the Gaussian variable $v = (v_1, \ldots, v_m)$ dictated by the probability 
\begin{align*}
p_i (x, v) = F \Bigl( v_i \Delta t^{1/2} \tfrac{1}{2 f(0)} \bigl( \mu (x)^\top a (x)^{-1} \sigma _i (x) \bigr) \Bigr), \quad i = 1, \ldots, m.  
\end{align*}
Here $F$ is as in the second part of this Remark and $\sigma_i$ is the $i$th column of $\sigma$.
\end{enumerate}
\end{remark}

\begin{example}[The Barker Proposal]\label{example:Barker}
    We return to our running example, the overdamped Langevin dynamics given by (\ref{langevin.running:2}). If the $p_i$'s are of the form (\ref{cdf.p.high.dimensions:1}), with $f$ and $F$ being respectively the density and cumulative distribution functions of the Logistic distribution, introduced in Example \ref{example.cdf.p:0}, then
    $$
    X^{n+1}_i=X^n_i+ \sqrt{2\Delta t}\nu_i b_i
    $$
    for all $i \in \{ 1 , \dots , d\}$, where $\nu_i \sim N(0,1)$, and $b_i=1$ with probability
    \begin{equation*}
p_i(x,\nu)=\frac{1}{1+\exp\left\{ -\sqrt{2\Delta t} \nu_i \partial_i \log \pi(x) \right\} }
\end{equation*}
    and $b=-1$ otherwise. This update was originally considered in \cite{livingstone2022barker} as a Metropolis--Hastings proposal.  A version without a Metropolis-Hastings correction step is presented as Algorithm 1 in \cite{mauri2024robust}, and called the \emph{unadjusted} Barker proposal. We refer to these schemes throughout as the \textit{Barker proposal} and the \textit{Unadjusted Barker algorithm}.
    \end{example}

Finally, we make the following regularity assumption throughout.

\begin{assumption}\label{regularity.coefficients:1}
There exists  $C>0$ and $m \in \mathbb{N}$ such that for all $k \leq 4$, $x,\xi \in \mathbb{R}^d$,
    \begin{equation*}
      \left|q_i^{(k)}(x,\xi)\right|  \leq C \left(1 + \| x \|^m + \| \xi \|^m \right), 
    \end{equation*} 
where $q_i^{(k)}$ denotes any partial derivative of $q_i$ (with respect to any coordinates in $\mathbb{R}^{d} \times \mathbb{R}^d$) of order $k$.
\end{assumption}

\begin{remark}
Straightforward calculations show that, under Assumptions \ref{ass.drift.general:1} and \ref{ass.volatility.general:1}, various choices of $q_i$'s such as 
$q_i(x,\xi)= 1/\!\left[ 1 \!+\! \exp\left\{ -2 \xi_i \Psi_i (x)  \right\} \right]$,
induced by the Logistic CDF applied on (\ref{cdf.p.high.dimensions:1}) satisfy Assumption \ref{regularity.coefficients:1}.
\end{remark}

Given the above, combined with additional result-specific assumptions outlined in Sections \ref{sec:general.convergence:00}-\ref{sec:ergodicity}, the main contributions of this article can be summarised as follows.

\begin{result}\label{result:1}
    Under appropriate assumptions, as the step-size $\Delta t \rightarrow 0$, the skew-symmetric scheme converges weakly to the true diffusion in $O(\Delta t)$, meaning it is weak order 1 (Theorem \ref{theorem.weak.convergence:0}).
\end{result}

\begin{result}\label{result:2}
    Under appropriate assumptions, for any fixed step-size $\Delta t \in (0,1)$, the skew-symmetric scheme has an invariant probability measure $\pi_{\Delta t}$ and converges to this measure at a geometric rate in total variation distance (Theorem \ref{thm:geometric_ergodicity}).
\end{result}

\begin{result}\label{result:3}
    Under appropriate assumptions, as the step-size $\Delta t \rightarrow 0$, the bias between the limiting measures of the skew-symmetric scheme, $\pi_{\Delta t}$, and the true diffusion, $\pi$, vanishes at a rate $O(\Delta t)$ (Theorem \ref{thm.bias.equilibrium:1}).
\end{result}

Finally, in order to prove Result \ref{result:1}, we first establish Theorem \ref{Generalisation.theorem:1}, which is of independent interest, since it generalises the theory of Milstein \& Tretyakov \cite[Theorem 2.2.1]{milstein.tretyakov:21} to a setting where there is no global Lipschitz condition for the drift of the diffusion. There are previous works in this direction by \cite{cerrai2001second,wang2023weak}, for which one needed assumption is that the drift satisfies a one-sided Lipschitz condition with negative Lipschitz constant, see e.g. Assumption 3.6.4 of \cite{wang2023weak}.  We establish the result under a general one-sided Lipschitz assumption.  Another result under similar assumptions to that presented here was independently and concurently proved in \cite{zhao2024weak}, which was originally published online around the same time as the present work.

%To the best of our knowledge, this is the first time this theorem is generalised beyond the Lipschitz drift regime and it adds a new tool that can be used in order to prove weak convergence results for a number of algorithms approximating stochastic differential equations.

\begin{result}\label{result:4}
    Under appropriate assumptions, Theorem 2.2.1 of \cite{milstein.tretyakov:21} generalises beyond the globally Lipschitz case to the setting in which the drift satisfies a one-sided Lipschitz condition, a locally Lipschitz condition and exhibits at most polynomial growth (Theorem \ref{Generalisation.theorem:1}).
\end{result}

\section{Weak convergence to the diffusion}\label{sec:general.convergence:00}

%\subsection{Weak convergence} \label{sec:weak-convergence}

In this section we will present our first result (Theorem \ref{theorem.weak.convergence:0}), showing that the skew-symmetric numerical scheme approximates the stochastic differential equation in the limit as the step-size $\Delta t \rightarrow 0$ at finite time scales. We begin by defining operators that will govern the leading-order terms in the relevant Taylor series expansions.

\begin{definition}
Let $L$ denote the generator of the diffusion process corresponding to the solution of (\ref{multi.dimensional.sde:0}), i.e. for $f \in C^2(\mathbb{R}^d)$
\begin{equation}\label{generator.diffusion:1}
  Lf(x)=  \mu(x) \cdot \nabla f(x)  + \frac{1}{2}\textup{Tr}\left( \nabla^2 f(x) \sigma(x)\sigma^{\top}(x)\right). 
\end{equation}
We also introduce the operator $\mathcal{A}_2:C^4(\mathbb{R}^d) \rightarrow C^0(\mathbb{R}^d)$, defined as
\begin{align}\label{second.derivative.numerical.scheme:1}
\mathcal{A}_2f(x)&=\sum_{i,k=1}^d\partial_if(x)\partial^{\xi}_{i}\partial^{\xi}_{k}\partial^{\xi}_{k}q_i(x,0)  \sigma_{i,i}(x)  \\
     &+ \sum_{i \neq k} \partial_i\partial_kf(x) \mu_i(x) \mu_k(x)
     +4 \sum_{i \neq k} \partial_{i}\partial_{k}f(x) \partial^{\xi}_{k}q_i(x,0) \partial^{\xi}_{i} q_k (x,0) \sigma_{i,i}(x) \sigma_{k,k}(x) \nonumber \\
        &+ \frac{1}{2} \sum_{i,k=1}^d\partial_i\partial_k^2f(x)  \mu_i(x)  \sigma_{k,k}(x)^2 
        +\frac{1}{8} \sum_{i,k=1}^d \partial_i^2\partial_k^2f(x) \sigma_{i,i}(x)^2   \sigma_{k,k} (x)^2, \nonumber
\end{align}
for $f \in C^4(\mathbb{R}^d)$. 
\end{definition}
Given the above, the following result characterises the local error associated to the skew-symmetric scheme $(X_n)_{n \in \mathbb{N}}$.

\begin{proposition}\label{semigroup.expansion:1}
Under Assumptions  \ref{ass.drift.general:1}, \ref{ass.volatility.general:1}, \ref{assumption.p.derivative:001} and \ref{regularity.coefficients:1}, the following hold:
\begin{enumerate}
\item \ For any $f \in C^4_P(\mathbb{R}^d)$, there exists $M_1:\mathbb{R}^d \times (0,1) \rightarrow \mathbb{R}$ and  $K_1 \in C^{\infty}_P(\mathbb{R}^d)$ such that for all $\Delta t \in (0,1)$ and $x \in \mathbb{R}^d$
\begin{equation}\label{semigroup.expansion.basic:2}
    \mathbb{E}_x\left[ f(X^{1}) \right] = f(x)+ \Delta t \cdot Lf(x) + M_1(x,\Delta t),
\end{equation}
with $|M_1(x,\Delta t)| \leq \Delta t^2 K_1(x)$.
\item \  Assume further that the volatility matrix $\sigma$ is diagonal: for all $i \neq j$, and all $x \in \mathbb{R}^d$, $\sigma_{i,j}(x)=0$. For any $f \in C^6_P(\mathbb{R}^d)$, there exists $M_2:\mathbb{R}^d \times (0,1) \rightarrow \mathbb{R}$ and  $K_2 \in C^{\infty}_P(\mathbb{R}^d)$ such that for all $\Delta t \in (0,1)$ and $x \in \mathbb{R}^d$
\begin{equation}\label{semigroup.expansion:2}
    \mathbb{E}_x\left[ f(X^{1}) \right] = f(x)+ \Delta t \cdot Lf(x) + \Delta t^2 \cdot \mathcal{A}_2f(x) + M_2(x,\Delta t),
\end{equation}
with $|M_2(x,\Delta t)| \leq \Delta t^3 K_2(x).$
\end{enumerate}
\end{proposition}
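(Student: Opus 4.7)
The plan is to carry out a Taylor expansion of $f(X_1)$ around $x$, take the conditional expectation using the distributional structure of $b$ given $\nu$, and carefully collect terms at each order of $\Delta t$. Write the single step as $X_1 = x + \xi$, where $\xi_i = \sqrt{\Delta t}\, b_i\, \sigma_{i,i}(x)\, \nu_i$ (using the diagonal volatility Assumption~\ref{ass.volatility.general:1}.2). Note that each $\xi_i$ has formal scale $\sqrt{\Delta t}$. For part~1 I would Taylor expand $f(x+\xi)$ to order~3 with an integral-form remainder involving the 4th derivative of $f$ (which is $C^4_P$), and for part~2 to order~5 with a remainder involving the 6th derivative. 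The stochastic remainders integrate against $\mathbb{E}[|\xi|^{k}]$, which by Assumption~\ref{ass.volatility.general:1} and Gaussian moment bounds is $O(\Delta t^{k/2})$ up to a polynomial factor in $\Vert x \Vert$, giving the desired $\Delta t^2 K_1(x)$ and $\Delta t^3 K_2(x)$ controls with $K_1,K_2 \in C^\infty_P(\mathbb{R}^d)$.

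The core computation is evaluating $\mathbb{E}[\xi_{i_1}\cdots \xi_{i_m}]$ for $m=1,\dots,5$. The key structural observation is that, conditionally on $\nu$, the signs $b_1,\dots,b_d$ are independent with $\mathbb{E}[b_i \mid \nu] = 2 p_i(x,\nu) - 1$, and $b_i^2 = 1$. Therefore
\begin{equation*}
\mathbb{E}\bigl[b_{i_1}\cdots b_{i_m} \bigm| \nu\bigr] = \prod_{i \,:\, m_i \text{ odd}} \bigl(2 p_i(x,\nu) - 1\bigr),
\end{equation*}
where $m_i$ is the multiplicity of index $i$ in the tuple $(i_1,\dots,i_m)$. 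I then Taylor expand each factor $2p_i(x,\nu)-1$ in $\nu$ around $0$, using Assumption~\ref{assumption.p.derivative:001}: the constant term vanishes because $p_i(x,0)=1/2$, the linear term is $\sqrt{\Delta t}\, \mu_i(x)/\sigma_{i,i}(x)\cdot \nu_i$, and the higher-order Taylor coefficients are controlled by Assumption~\ref{regularity.coefficients:1} (transferred from $q_i$ to $p_i$ by the chain rule). Multiplying by the monomial $\nu_{i_1}\cdots\nu_{i_m}$ and integrating against the standard Gaussian, only terms with every $\nu_j$-power even survive; Wick's formula supplies the combinatorics.

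Plugging the moment formulas back into the Taylor expansion, I would collect powers of $\Delta t$ as follows. At order $\Delta t$, the surviving contributions are $\sum_i \partial_i f(x)\cdot \mu_i(x)$ from $\mathbb{E}[\xi_i]$ (linear-in-$\nu$ expansion of $2p_i-1$, combined with $\mathbb{E}[\nu_i^2]=1$), and $\tfrac{1}{2}\sum_i \partial_i^2 f(x)\, \sigma_{i,i}^2(x)$ from the diagonal $\mathbb{E}[\xi_i^2]$ (since $b_i^2=1$); off-diagonal $\mathbb{E}[\xi_i\xi_j]$ for $i\neq j$ starts at $O(\Delta t^2)$. This assembles into $Lf(x)$, proving part~1. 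For part~2, I would track every $O(\Delta t^2)$ contribution: the next-order correction to $\mathbb{E}[\xi_i]$ coming from the cubic Taylor term in $\nu$ of $2p_i-1$, yielding the term involving $\partial_i^{\xi}\partial_k^{\xi}\partial_k^{\xi} q_i(x,0)$; the leading $O(\Delta t^2)$ off-diagonal second moments, which split into the product $\mu_i(x)\mu_k(x)$ (from the two linear drift contributions) plus the symmetric cross term $4\,\partial_k^\xi q_i(x,0)\,\partial_i^\xi q_k(x,0)\,\sigma_{i,i}^2\sigma_{k,k}^2$; the third-moment contribution $\tfrac{1}{2}\sum_{i,k}\partial_i\partial_k^2 f(x)\,\mu_i(x)\sigma_{k,k}^2(x)$ arising from $b_i^2=1$; and the fourth-moment Gaussian contribution $\tfrac{1}{8}\sum_{i,k}\partial_i^2\partial_k^2 f(x)\,\sigma_{i,i}^2\sigma_{k,k}^2$ from $\mathbb{E}[\xi_i^2\xi_k^2]$. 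These are precisely the terms in $\mathcal{A}_2 f(x)$.

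The main obstacle will be the bookkeeping in step~2, namely ensuring that every odd-order combination cancels (thanks to Gaussian symmetry together with the skew-symmetric structure of $b\ast\nu$) and that the $O(\Delta t^2)$ terms assemble exactly into the form of $\mathcal{A}_2$; the combinatorics of contributions from $\mathbb{E}[\xi_i\xi_j\xi_k]$ with mixed parities requires care. The polynomial-growth bounds on $K_1, K_2$ follow routinely by estimating each moment $\mathbb{E}[|\xi_{i_1}\cdots \xi_{i_m}|]$ via Cauchy--Schwarz, Gaussian moments, and the polynomial growth of $\sigma_{i,i}$, $\mu_i$, and the derivatives of $q_i$ (Assumptions~\ref{ass.drift.general:1}, \ref{ass.volatility.general:1}, \ref{regularity.coefficients:1}), combined with the $C^{4}_P$ or $C^6_P$ growth of the derivatives of $f$.
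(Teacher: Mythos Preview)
Your proposal is correct and follows essentially the same strategy as the paper's proof: Taylor-expand $f$ in the increment, Taylor-expand the skewing probabilities in $\nu$ (equivalently in $\xi$), and use Gaussian moment identities (Isserlis/Wick) to extract the coefficients at each power of $\Delta t$. The only organisational difference is that the paper writes the expectation as an explicit sum over $b\in\{\pm 1\}^d$ weighted by $\prod_i[\tfrac{1}{2}+b_i(q_i-\tfrac{1}{2})]$ and then invokes a combinatorial lemma ($\sum_b b_{i_1}\cdots b_{i_n}=0$ unless every index appears with even multiplicity), whereas you take the conditional expectation over $b$ given $\nu$ directly via $\mathbb{E}[b_{i_1}\cdots b_{i_m}\mid\nu]=\prod_{i:\,m_i\text{ odd}}(2p_i-1)$; these are the same computation, and your formulation is arguably a bit more streamlined.
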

\begin{proof}[Proof of Proposition \ref{semigroup.expansion:1}]
 The proof relies on Taylor series expansions and is presented in Appendix \ref{proof.of.semigroup.expansion:00}.
\end{proof}

The second part of Proposition \ref{semigroup.expansion:1} will be useful when we study the long-time behaviour of the numerical scheme in Section \ref{subsection:bias.equilibrium:00}. The first part of Proposition \ref{semigroup.expansion:1} will be recognisable as the main ingredient needed for standard arguments establishing weak convergence. We show this below after establishing the following result, which is of independent interest, since it allows the theory of Milstein \& Tretyakov \cite[Theorem 2.2.1]{milstein.tretyakov:21} to be used for non-globally Lipschitz drifts. For this we need to make the following assumption.

\begin{assumption}\label{ass:der assumption}
The volatility coefficient $\sigma: \mathbb{R}^d \rightarrow \mathbb{R}^{d \times d}$ has bounded derivatives of all orders. Furthermore, there exists a constant $C > 0$ such that
 \begin{equation}
 \label{eq:drift contracting}
  \xi^\top \nabla \mu(x) \xi \le C \| \xi \|^2,   
 \end{equation}
 for all $x, \xi \in \mathbb{R}^d$.
\end{assumption}

\begin{remark}
\begin{enumerate}
    \item \ The assumption in \eqref{eq:drift contracting} is inspired by \cite{imkeller2019differentiability}.
Intuitively, it  means that the degree with  which $\mu$ ``points inwards towards the origin'' is bounded from below as $\Vert x\Vert \rightarrow \infty$. %For example, $\mu(x) = -f(\|x\|^2)x$, with $f$ positive, differentiable, and monotonically increasing satisfies this assumption.
\item \ In our running example of the Langevin dynamics, the drift satisfies $\mu=-\nabla U$ for some $U \in C^2(\mathbb{R}^d)$. In that case, \eqref{eq:drift contracting} is equivalent to asking that there exists $C>0$ such that for all $x \in \mathbb{R}^d$
\begin{equation*}
    \nabla^2U(x) \geq -C I_d,
\end{equation*}
meaning that all the eigenvalues of the Hessian of $U$ are bounded below by some (potentially negative) constant. This is a fairly mild assumption, typically satisfied in the context of sampling literature. 
\item \ Notice that it is equivalent to impose this condition outside of a ball of some radius $R$: The drift satisfies \eqref{eq:drift contracting} for all $x \in \mathbb{R}^d$ if and only if there exists $R > 0$ and $C>0$ such that \eqref{eq:drift contracting} is satisfied for all $x \in \mathbb{R}^d$ with $\Vert x\Vert \ge R $. 
\end{enumerate}
\end{remark}

\begin{theorem}[Regularity and growth of diffusion semi-group with non-globally Lipschitz drift]\label{Generalisation.theorem:1}
Assume that Assumptions \ref{ass.drift.general:1}, \ref{ass:der assumption} and the first part of Assumption \ref{ass.volatility.general:1} hold. Fix $T > 0$ and $l \in \mathbb{N}$, and consider the SDE
\begin{equation}
\label{eq:SDE_appendix}
\mathrm{d}Y^x_t = \mu(Y^x_t) \, \mathrm{d}t + \sigma(Y^x_t) \, \mathrm{d}W_t, \qquad Y_0 = x, \qquad t \in [0,T].
\end{equation}
For $f \in C^l_P(\mathbb{R^d})$, 
\begin{equation}\label{def.u:01}
u_t(x) := \mathbb{E}[f(Y^x_t)],\qquad  x \in \mathbb{R}^d, t \in [0,T],
\end{equation}
is well defined (that is, $\mathbb{E}[|f(Y_t^x)|] < \infty$, for all $x \in \mathbb{R}^d$ and $t \in [0,T]$). Moreover, $u_t \in C^{l}_P(\mathbb{R}^d)$, and there exist constants $C>0$ and $m \in \mathbb{N}$, independent of $t \in [0,T]$ and $x \in \mathbb{R}^d$, such that for all $x \in \mathbb{R}^d$
\begin{equation}
\label{eq:moment bound u}
|u_t(x)| \le C(1+ \Vert x \Vert^m)
\end{equation}
and for all $n=1,2, \dots l$
\begin{equation}
\label{eq:der u bound}
\Vert \nabla^n u_t(x) \Vert_F \le C(1 + \Vert x \Vert^m).
\end{equation}
\end{theorem}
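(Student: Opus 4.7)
\medskip

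\textbf{Proof proposal.} The plan is to follow the classical ``variation of initial condition'' strategy (see e.g. Kunita, Krylov, or the flow-approach of \cite{imkeller2019differentiability}), but replacing the global Lipschitz control on $\mu$ with the one-sided bound \eqref{one.sided.lipschitz.assump:2} for moment estimates on $Y_t^x$, and with the directional bound \eqref{eq:drift contracting} for moment estimates on the derivative flows. The output will be uniform (in $t \in [0,T]$) polynomial-in-$\Vert x \Vert$ bounds for all derivatives $\nabla^n u_t$, which via Leibniz will imply $u_t \in C^l_P(\mathbb{R}^d)$ and \eqref{eq:der u bound}.

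First I would establish moment estimates on $Y_t^x$ itself. Applying It\^o's formula to $V_p(y) = (1 + \Vert y \Vert^2)^p$ and combining \eqref{one.sided.lipschitz.assump:2} with the linear growth bound on $\sigma$ implied by \eqref{volatility.lipschitz:1} and boundedness of $\sigma(0)$, one obtains $\mathrm{d} V_p(Y_t^x) \le C_p V_p(Y_t^x)\,\mathrm{d}t + \mathrm{d}M_t$ with $M_t$ a local martingale; Gronwall then gives $\mathbb{E}[\Vert Y_t^x\Vert^{2p}] \le C_{p,T}(1+\Vert x\Vert^{2p})$ for every $p \ge 1$, uniformly in $t \in [0,T]$. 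Since $f \in C_P^l$, this already yields \eqref{eq:moment bound u}.

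For the derivative bounds I would proceed by induction on $n$. The first variation process $J_t := \nabla_x Y_t^x$ solves the linear matrix SDE
\begin{equation*}
\mathrm{d}J_t = \nabla\mu(Y_t^x)\,J_t\,\mathrm{d}t + \sum_{j=1}^d \nabla\sigma_{\cdot,j}(Y_t^x)\,J_t\,\mathrm{d}W_t^j, \qquad J_0 = I_d.
\end{equation*}
Applying It\^o to $\Vert J_t \xi\Vert^{2p}$ for fixed $\xi \in \mathbb{R}^d$, the drift term produces $2p\,\Vert J_t\xi\Vert^{2p-2}\, (J_t\xi)^\top \nabla\mu(Y_t^x)(J_t\xi)$, which by \eqref{eq:drift contracting} is at most $2pC\Vert J_t\xi\Vert^{2p}$; the diffusion and It\^o-correction terms are controlled by the assumed boundedness of $\nabla\sigma$ (Assumption \ref{ass:der assumption}). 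Gronwall then delivers the crucial bound $\mathbb{E}[\Vert J_t\Vert_F^{2p}] \le C_{p,T}$, uniformly in $x$ and $t$. For the inductive step, differentiating \eqref{eq:SDE_appendix} in $x$ another $n-1$ times produces an SDE for $J_t^{(n)} := \nabla^n_x Y_t^x$ of the form
\begin{equation*}
\mathrm{d}J_t^{(n)} = \nabla\mu(Y_t^x)\,J_t^{(n)}\,\mathrm{d}t + R_t^{(n)}\,\mathrm{d}t + \sum_{j=1}^d \bigl(\nabla\sigma_{\cdot,j}(Y_t^x)J_t^{(n)} + S_t^{(n,j)}\bigr)\,\mathrm{d}W_t^j,
\end{equation*}
where $R_t^{(n)}$ and $S_t^{(n,j)}$ are polynomials in $\{\nabla^k\mu(Y_t^x),\nabla^k\sigma(Y_t^x)\}_{2 \le k \le n}$ and in $\{J_t^{(j)}\}_{j<n}$ (this is the standard Faà di Bruno expansion for SDE flows). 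By Assumption \ref{ass.drift.general:1} each $\nabla^k\mu$ lies in $C^\infty_P$, and by Assumption \ref{ass:der assumption} each $\nabla^k\sigma$ is bounded, so the moment bounds on $Y_t^x$ and the inductive moment bounds on $J_t^{(j)}$, $j < n$, yield $\sup_{t \le T}\mathbb{E}[\Vert R_t^{(n)}\Vert_F^{2p} + \Vert S_t^{(n,j)}\Vert_F^{2p}] \le C(1 + \Vert x\Vert^{m_n})$. Repeating the It\^o--Gronwall argument (now with a forcing term) on $\Vert J_t^{(n)}\Vert_F^{2p}$ and using once more \eqref{eq:drift contracting} to absorb the $\nabla\mu$-drift propagates the polynomial moment bound to order $n$.

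Finally, a Faà di Bruno expansion gives
\begin{equation*}
\nabla^n u_t(x) = \sum_{k=1}^{n} \mathbb{E}\bigl[ \nabla^k f(Y_t^x) \bullet P_k(J_t, J_t^{(2)}, \dots, J_t^{(n)}) \bigr],
\end{equation*}
where $P_k$ is a polynomial contraction of the variation processes and $\bullet$ denotes the appropriate tensor pairing. Since $f \in C_P^l$, $\vert\nabla^k f(y)\vert \le C(1+\Vert y\Vert^{m'})$, so the desired estimate \eqref{eq:der u bound} follows by H\"older's inequality from the moment bounds established above. Continuity of $\nabla^n u_t$ in $x$ follows from standard continuity properties of the flow $x \mapsto (Y_t^x, J_t, \dots, J_t^{(n)})$ combined with dominated convergence justified by the same moment bounds, yielding $u_t \in C^l_P(\mathbb{R}^d)$.

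The main obstacle is the induction step for $n \ge 2$, where the remainder terms $R_t^{(n)}$ unavoidably involve $\nabla^k\mu(Y_t^x)$ with $k \ge 2$, which are only polynomially bounded (not uniformly bounded) under Assumption \ref{ass.drift.general:1}. The argument therefore requires carefully propagating sufficiently high moments of $Y_t^x$ and of all lower-order variations together, and exploiting the fact that the \emph{linear part} $\nabla\mu(Y_t^x) J_t^{(n)}$ of the drift is still controlled one-sidedly by \eqref{eq:drift contracting} even though $\nabla\mu$ is unbounded, so that the Gronwall closure goes through uniformly in $t \in [0,T]$.
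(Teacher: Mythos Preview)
Your outline matches the paper's strategy closely: moment bounds on $Y_t^x$ from the one-sided Lipschitz condition, moment bounds on the variation processes $J_t^{(n)}$ from \eqref{eq:drift contracting} and boundedness of $\nabla\sigma$ via It\^o--Gronwall, then Fa\`a di Bruno to assemble $\nabla^n u_t$. The estimates you write down are exactly the right ones. But you have skipped the step that is the actual technical heart of the theorem: you write the variation equation for $J_t = \nabla_x Y_t^x$ and proceed as though its existence were automatic, citing Kunita. Kunita's flow theorem, however, requires globally Lipschitz coefficients, which $\mu$ is not under the present assumptions; so a priori you cannot even make sense of $J_t$, let alone apply It\^o's formula to $\Vert J_t\xi\Vert^{2p}$. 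The same issue recurs when you invoke ``standard continuity properties of the flow'' to justify exchanging $\nabla^n$ and $\mathbb{E}$.

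The paper closes this gap by a taming-and-limit argument. It introduces a modified drift $\mu^{(r)} \in C_b^\infty(\mathbb{R}^d;\mathbb{R}^d)$ that agrees with $\mu$ on $\{\Vert x\Vert \le r\}$ and still satisfies \eqref{eq:drift contracting} with an $r$-independent constant. For the tamed SDE, Kunita's theorem applies and your It\^o--Gronwall computations go through verbatim, yielding moment bounds on $\nabla_x^n Y_t^{x,(r)}$ that are \emph{uniform in $r$}. One then shows that $u_t^{(r)}(x) := \mathbb{E}[f(Y_t^{x,(r)})]$ converges to $u_t(x)$ and that $(\nabla_x u_t^{(r)})_{r}$ is Cauchy in $C(\mathcal{U};\mathbb{R}^d)$ for any bounded open $\mathcal{U}$, using that $Y_t^x$ and $Y_t^{x,(r)}$ can be coupled to coincide on the event $\{\sup_{0 \le s \le t}\Vert Y_s^x\Vert \le r\}$ together with the moment bound on $\sup_{s \le t}\Vert Y_s^x\Vert$; Vitali's theorem (rather than dominated convergence) is used to pass limits and justify the interchange of $\nabla_x$ and $\mathbb{E}$. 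The limit then inherits the $r$-uniform polynomial bounds. Your proposal becomes complete once this localization layer is added; the estimates you sketched are precisely those needed for the tamed flow.
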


\begin{proof}[Proof of Theorem \ref{Generalisation.theorem:1}]
    The proof is postponed until Appendix \ref{proof.of.regularity:00}.
\end{proof}
Combining Proposition \ref{semigroup.expansion:1} and Theorem \ref{Generalisation.theorem:1} we get the following result. This guarantees that for small step-size $\Delta t$, the skew symmetric process approximates the underlying stochastic differential equation.

\begin{theorem}[Weak convergence]\label{theorem.weak.convergence:0}
Assume that Assumptions 
\ref{ass.drift.general:1}, \ref{ass.volatility.general:1},
\ref{assumption.p.derivative:001}, \ref{regularity.coefficients:1} and \ref{ass:der assumption} hold. Fix $T>0$. Then there exists $K \in C^{\infty}_P(\mathbb{R}^d)$ such that for all $\Delta t>0$, for all $k \leq N:=\lfloor \frac{T}{\Delta t} \rfloor$, all $f \in C^4_P(\mathbb{R}^d)$ and all $x \in \mathbb{R}^d$ we have
\begin{equation}\label{order.1.convergence:1}
        \left| \mathbb{E}_x\left[ f(X^k) -f(Y_{k\Delta t})\right] \right| \leq K(x) \Delta t.
    \end{equation}
\end{theorem}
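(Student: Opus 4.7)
The plan is to deduce Theorem \ref{theorem.weak.convergence:0} from the combination of the local-error expansion in Proposition \ref{semigroup.expansion:1} and the semigroup regularity supplied by Theorem \ref{Generalisation.theorem:1}, following the classical telescoping strategy of Milstein and Tretyakov \cite[Theorem 2.2.1]{milstein.tretyakov:21}. Fix $k \leq N := \lfloor T/\Delta t\rfloor$ and set $u_t(x) := \mathbb{E}[f(Y_t^x)]$ for $t \in [0,k\Delta t]$. By Theorem \ref{Generalisation.theorem:1} applied with $l=4$ we have $u_t \in C^4_P(\mathbb{R}^d)$, and, crucially, the polynomial bounds on $u_t$ and its first four derivatives are \emph{uniform} in $t \in [0,T]$. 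The skew-symmetric iterates form a Markov chain, so the semigroup property $u_{(j+1)\Delta t} = P_{\Delta t}\, u_{j\Delta t}$ holds for the true diffusion semigroup $P_{\Delta t}$, and we telescope
\begin{equation*}
\mathbb{E}_x[f(X_k) - f(Y_{k\Delta t})]
 = \sum_{j=0}^{k-1} \mathbb{E}_x\bigl[g_j(X_{j+1}) - (P_{\Delta t} g_j)(X_j)\bigr],
 \qquad g_j := u_{(k-j-1)\Delta t}.
\end{equation*}

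For each term I would compare the two one-step expansions at the base point $X_j$. Proposition \ref{semigroup.expansion:1}(1) applied to $g_j \in C^4_P$ gives
\begin{equation*}
\mathbb{E}[g_j(X_{j+1})\mid X_j] = g_j(X_j) + \Delta t\, L g_j(X_j) + R_j^{\mathrm{sch}}(X_j),\qquad |R_j^{\mathrm{sch}}(x)| \leq \Delta t^2 K_1^{g_j}(x),
\end{equation*}
while the standard Itô/Dynkin expansion for the true diffusion (requiring only $g_j \in C^4_P$ and the polynomial moment bounds on $Y$ implied by Assumptions \ref{ass.drift.general:1}--\ref{ass.volatility.general:1}) yields the same leading terms with an analogous $\mathcal{O}(\Delta t^2)$ remainder. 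Subtracting, the deterministic $g_j(X_j) + \Delta t Lg_j(X_j)$ contribution cancels, leaving $|\mathbb{E}[g_j(X_{j+1}) - (P_{\Delta t}g_j)(X_j)\mid X_j]| \leq \Delta t^2 \, C(1+\|X_j\|^m)$, where the polynomial $C(1+\|x\|^m)$ can be taken independent of $j$ thanks to the uniform-in-$t$ bounds on derivatives of $u_t$ from Theorem \ref{Generalisation.theorem:1} and the uniform polynomial bounds on $q_i$ from Assumption \ref{regularity.coefficients:1}.

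Summing over $j$ and using $k \Delta t^2 \leq T \Delta t$ yields
\begin{equation*}
|\mathbb{E}_x[f(X_k) - f(Y_{k\Delta t})]| \leq T \Delta t \cdot C \cdot \bigl(1 + \sup_{0 \leq j \leq N-1} \mathbb{E}_x[\|X_j\|^m]\bigr),
\end{equation*}
so it remains to establish a uniform moment bound $\sup_{0 \leq j \leq N} \mathbb{E}_x[\|X_j\|^m] \leq \tilde K(x)$ with $\tilde K \in C^\infty_P(\mathbb{R}^d)$, from which the conclusion with $K(x) := T \cdot C(1+\tilde K(x))$ follows. I would prove this by a discrete Lyapunov argument with test function $V(x) = (1 + \|x\|^2)^{m/2}$: using the one-sided Lipschitz condition \eqref{one.sided.lipschitz.assump:2} on $\mu$ together with the skew-symmetric identity $\mathbb{E}[(X_{n+1}-X_n)(X_{n+1}-X_n)^\top \mid X_n] = \Delta t\, \sigma(X_n)\sigma(X_n)^\top$ (the multidimensional analogue of \eqref{eq:jump size skew}), a Taylor expansion of $V(X_{n+1})$ gives $\mathbb{E}[V(X_{n+1})\mid X_n] \leq (1+C\Delta t) V(X_n) + C'\Delta t$, which iterates to a bound of order $e^{CT}(V(x) + C' T)$ uniformly over $j \leq N$.

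The main obstacle is conceptual rather than computational: in the non-globally-Lipschitz regime the standard assertion that $u_t \in C^4_P$ with uniform-in-$t$ polynomial bounds is \emph{not} available from classical references, and this is precisely the gap that Theorem \ref{Generalisation.theorem:1} is designed to fill. Once those uniform bounds are in hand, the polynomial factors in $K_1^{g_j}$ can be controlled independently of $j$, and the telescoping inequality closes. The secondary technical point is the moment bound for the scheme, which is where the particular structure of \eqref{general.update.scheme:1}---the boundedness of $\|b_n\|$ and the consequent absence of a $\mu^2\Delta t^2$ term in the mean squared jump---is essential to handle super-linearly growing drifts that would otherwise destabilise the corresponding bound for Euler--Maruyama.
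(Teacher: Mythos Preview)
Your proposal is correct and follows essentially the same route as the paper: assemble the three ingredients for \cite[Theorem 2.2.1]{milstein.tretyakov:21} (local one-step error via Proposition \ref{semigroup.expansion:1}, uniform-in-$t$ semigroup regularity via Theorem \ref{Generalisation.theorem:1}, and a uniform moment bound on the scheme) and then telescope; the paper simply cites Milstein--Tretyakov after verifying these hypotheses, whereas you write the telescoping out. The paper packages the moment bound as a separate lemma (Lemma \ref{weak.conv.third.point.proof:1}), proved by directly expanding $\|X_1\|^{2m}$ as a polynomial in $\Delta t^{1/2}$ with coefficients $O((1+\|x\|)^{2m})$ rather than via a Lyapunov argument on $V$; one small inaccuracy in your sketch is that the full matrix identity $\mathbb{E}[(X_{n+1}-X_n)(X_{n+1}-X_n)^\top\mid X_n]=\Delta t\,\sigma(X_n)\sigma(X_n)^\top$ need not hold off the diagonal (since $b_i$ and $b_j$ are correlated through $\nu$), but the scalar identity $\|X_{n+1}-X_n\|^2=\Delta t\,\|\sigma(X_n)\nu\|^2$ does hold and is all that is needed.
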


\begin{proof}[Proof of Theorem \ref{theorem.weak.convergence:0}]
The proof is postponed until Appendix  \ref{proof.weak.convergence:000}. 
\end{proof}

\section{Long-time behaviour}\label{sec:ergodicity}
 In the sampling literature, where the goal is to sample from a distribution $\pi$, many algorithms have been motivated by numerically discretising a stochastic differential equation such as (\ref{multi.dimensional.sde:0}), with $\mu$ and $\sigma$ carefully chosen so that the diffusion has equilibrium distribution $\pi$. Samples from the numerical scheme $X^1, X^2, \dots, X^n$ are then obtained, with the idea that the law of $X^n$ approximates $\pi$ as $n \rightarrow \infty$. The skew-symmetric scheme can be used for this purpose, and so in this section we study its behaviour in the limit as $n \rightarrow \infty$. Note that in this section we assume that the volatility $\sigma(x)$ is diagonal (see Assumption \ref{ass:long_time}.1).  This assumption will hold in many examples motivated by the need to sample from the equilibrium distribution $\pi$, such as our running example of Overdamped Langevin dynamics.

\subsection{Equilibrium distribution and geometric convergence}\label{sec:geom.ergo:}

We begin by investigating the existence of an equilibrium distribution for the skew-symmetric scheme. The main result of this subsection (stated as Theorem \ref{thm:geometric_ergodicity}) shows that under suitable assumptions the Markov chain produced by the skew-symmetric numerical scheme with fixed step-size $\Delta t$ has an equilibrium distribution $\pi_{\Delta t}$, with finite moments of all orders, and converges to this equilibrium at a geometric rate.
To this end we make the following additional assumption.

\begin{assumption} \label{ass:long_time}
    Assume the following:
\begin{enumerate}  
\item \ The volatility matrix is diagonal: for all $i \neq j$, $x \in \mathbb{R}^d$, 
\begin{equation*}
   \sigma_{i,j}(x)=0.
\end{equation*}
\item \ There exist $M>N>0$, such that for all $i \in \{ 1,...,d \}$ and $x \in \mathbb{R}^d$
\begin{equation*}
N \leq \sigma_{i,i}(x) \leq M.
\end{equation*} 
\item \ For $i \in\{1,...,d\}$, the drift component  $\mu_i$ satisfies
\begin{equation} \label{eq:contract}
\lim_{\substack{\|x\|\to\infty \\ x_i < -\|x\|_\infty/2}} \mu_i(x) 
= \infty, 
~~
\lim_{\substack{\|x\|\to\infty \\ x_i > \|x\|_\infty/2}} \mu_i(x)
= -\infty .
\end{equation}

\item \ For all $i \in\{1,...,d\}$, $p_i:\mathbb{R}^d \times \mathbb{R}^d \rightarrow (0,1)$ depends on $\nu$ only via the $i$th coordinate $\nu_i$, i.e. $p_i(x,\nu) := g_i(x,\nu_i)$ for some function $g_i$. Furthermore, for all $i \in \{ 1,...,d \}$, $g_i$ is bounded away from zero on compact sets. In addition %$g_i(x,\nu_i) \to 1$ as $\nu_i \cdot \mu_i(x) \to \infty$ and $\to 0$ as $\nu_i\cdot\mu_i(x) \to -\infty$.
 \begin{equation} \label{gi_eqation:1}
    \lim_{\nu_i \cdot \mu_i(x) \rightarrow +\infty} g_i(x,\nu) 
     = 1, 
     ~~
     \lim_{\nu_i \cdot \mu_i(x) \rightarrow -\infty} g_i(x,\nu)
     = 0 .
\end{equation}
\end{enumerate}
\end{assumption}

We observe that, under Assumptions \ref{ass:long_time}.1 and \ref{ass:long_time}.2, Assumption \ref{ass:long_time}.4 will be satisfied if we choose $p_i(x,\nu)$ according to the CDF form of (\ref{cdf.p.high.dimensions:1}). Below we provide two simple sufficient conditions under which equations \eqref{eq:contract} hold, although we stress that they are also likely to hold in many other cases of interest.

\begin{proposition} \label{prop:sufficient_for_contract}
    Equations \eqref{eq:contract} hold if for some $R<\infty$ and all $\|x\|>R$ either
    \begin{enumerate}
        \item (Independence outside a ball) For all $i \in \{ 1,...,d \}$, $\mu_i$ depends on $x$ only via its $i$th coordinate, i.e. $\mu_i(x) := \tilde{\mu_i}(x_i)$, and $\lim_{x_i \to \pm \infty}\tilde{\mu_i}(x_i) =\mp \infty$.
        \item (Spherical symmetry outside of a ball) $\mu_i(x) = x_i\cdot \rho'(\|x\|)/\|x\|$ for some $\rho \in C^1(\mathbb{R})$ such that $\lim_{s\to \infty}\rho'(s) = -\infty$.
    \end{enumerate}
\end{proposition}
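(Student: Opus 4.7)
The plan is to verify each of the two sufficient conditions implies \eqref{eq:contract} by direct substitution, making essential use of the elementary comparison $\|x\|_\infty \ge \|x\|/\sqrt{d}$ between the sup and Euclidean norms on $\mathbb{R}^d$. This inequality is what converts the constraint $x_i < -\|x\|_\infty/2$ (or its analogue with a positive lower bound) into a lower bound on $|x_i|$ in terms of $\|x\|$, and it is the only non-trivial ingredient.

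For Case 1 (independence outside a ball), I would fix $i$ and consider a sequence $(x^{(n)})$ with $\|x^{(n)}\| \to \infty$ and $x_i^{(n)} < -\|x^{(n)}\|_\infty/2$. Combining with $\|x^{(n)}\|_\infty \ge \|x^{(n)}\|/\sqrt{d}$ gives $x_i^{(n)} < -\|x^{(n)}\|/(2\sqrt{d}) \to -\infty$. For $\|x^{(n)}\| > R$ the function reduces to $\mu_i(x^{(n)}) = \tilde{\mu}_i(x_i^{(n)})$, and the assumed one-dimensional limit $\tilde{\mu}_i(s) \to +\infty$ as $s \to -\infty$ yields the first equation of \eqref{eq:contract}. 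The second equation is identical up to a sign flip.

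For Case 2 (spherical symmetry outside a ball), I would rewrite
\begin{equation*}
\mu_i(x) = \frac{x_i}{\|x\|} \cdot \psi'(\|x\|),
\end{equation*}
and note that on the set $\{x_i < -\|x\|_\infty/2\}$ the comparison $\|x\|_\infty \ge \|x\|/\sqrt{d}$ gives $x_i/\|x\| \le -1/(2\sqrt{d})$, while trivially $x_i/\|x\| \ge -1$. Hence the factor $x_i/\|x\|$ remains in the closed interval $[-1, -1/(2\sqrt{d})]$, bounded strictly away from zero and negative. Since $\psi'(\|x\|) \to -\infty$ as $\|x\| \to \infty$ by assumption, the product tends to $+\infty$, establishing the first equation of \eqref{eq:contract}. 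The second equation follows by the symmetric argument on $\{x_i > \|x\|_\infty/2\}$, where $x_i/\|x\| \in [1/(2\sqrt{d}), 1]$.

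I do not anticipate any real obstacle: both cases amount to recognising the norm comparison that controls $x_i$ from the constraint on $\|x\|_\infty$, after which the prescribed one-dimensional limits finish the argument. The whole proof can be stated in a few lines once the norm inequality is invoked.
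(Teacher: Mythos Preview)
Your proposal is correct and follows essentially the same route as the paper's proof: both use the equivalence of norms on $\mathbb{R}^d$ to bound $|x_i|/\|x\|$ away from zero under the constraint $|x_i| > \|x\|_\infty/2$, and then feed in the assumed one-dimensional limits. The only cosmetic difference is that you write the explicit constant $1/(2\sqrt{d})$ while the paper invokes an abstract $c>0$ from norm equivalence, and you spell out Case~1 in slightly more detail than the paper (which dismisses it as following directly from the definition).
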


\begin{proof}[Proof of Proposition \ref{prop:sufficient_for_contract}]
     The independent case follows directly from the definition. For the second case note that by equivalence of norms on $\mathbb{R}^d$ there exists $c>0$ such that if $|x_i| > \|x\|_{\infty}/2$ and $x \neq 0$ then
     $
     c \leq |x_i| / \|x\| \leq 1.
     $
     As $x_i \to \infty$ under the restriction $|x_i|>\|x\|_\infty/2$, it therefore holds that $c \rho'(\|x\|) \leq \mu_i(x) \leq \rho'(\|x\|)$, which implies that $\mu_i(x) \to -\infty$. In the case $x_i \to -\infty$ under the same restriction, a similar argument shows that $\mu_i(x) \to\infty$.
\end{proof}

\begin{remark}
    Both cases can be considered when $\mu(x) := -\nabla U(x)$ for some potential $U(x)$. Outside a ball of radius $R$, the first corresponds to $U(x) := \sum_i U_i(x_i)$ for suitable $U_i$ and the second to $U(x) := \rho(\|x\|)$ for suitable $\rho$.
\end{remark}

We now state the main result of this section.

\begin{theorem}[Geometric Ergodicity]\label{thm:geometric_ergodicity}
Under Assumption \ref{ass:long_time} the Markov chain generated by the skew-symmetric numerical scheme has an invariant measure $\pi_{\Delta t}$ with finite moments of all orders. Furthermore, denoting by $P$ the associated Markov kernel, there exists $C>0$, $\lambda > 0$ and $V:\mathbb{R}^d \to [1,\infty)$ such that for all $n \in \mathbb{N}$ and $x \in \mathbb{R}^d$ 
\begin{equation} \label{eq:geo_erg}
\|P^n(x,\cdot) - \pi_{\Delta t}\|_{TV} \leq CV(x) e^{-\lambda n},
\end{equation}
where $\|\cdot\|_{TV}$ denotes the total variation norm.
\end{theorem}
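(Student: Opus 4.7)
The plan is to invoke the classical Foster--Lyapunov / Meyn--Tweedie criterion for geometric ergodicity of discrete-time Markov chains: if one can establish (i) a geometric drift condition $PV \le \lambda V + b\mathbf{1}_C$ for a measurable $V:\mathbb{R}^d \to [1,\infty)$, some $\lambda \in (0,1)$, $b<\infty$ and a compact set $C$, together with (ii) a minorisation condition $P(x,\cdot) \ge \epsilon \nu(\cdot)$ on $C$ for some probability measure $\nu$, then \eqref{eq:geo_erg} follows from standard theorems, along with existence and uniqueness of $\pi_{\Delta t}$ and $\int V\,d\pi_{\Delta t} < \infty$. Finite moments of all orders are then obtained by iterating the argument with a family of Lyapunov functions of arbitrarily high polynomial growth.

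For the drift condition I would take advantage of the coordinate-wise structure: under Assumption \ref{ass:long_time}.3, the pairs $(b_n^i,\nu_n^i)_{i=1}^d$ are independent given $X_n$. For a candidate $V(x) = 1+\|x\|^2$ a direct computation gives
\begin{equation*}
PV(x) - V(x) = 2\sqrt{\Delta t}\sum_{i=1}^d x_i \sigma_{ii}(x)\psi_i(x) + \Delta t \sum_{i=1}^d \sigma_{ii}^2(x),
\end{equation*}
where $\psi_i(x) := \mathbb{E}[b_n^i \nu_n^i \mid X_n = x] = 2\int_\mathbb{R} \nu\, g_i(x,\nu)\phi(\nu)\,d\nu$ satisfies $|\psi_i(x)|\le\sqrt{2/\pi}$ in general and, by dominated convergence applied to \eqref{gi_eqation:1}, $\psi_i(x) \to \mathrm{sign}(\mu_i(x))\sqrt{2/\pi}$ as $|\mu_i(x)|\to\infty$. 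Combining with Assumption \ref{ass:long_time}.2, for every coordinate $i$ with $|x_i| > \|x\|_\infty/2$ the contribution $x_i \sigma_{ii}(x)\psi_i(x)$ is asymptotically at most $-c|x_i|$ as $\|x\|\to\infty$, while coordinates $j$ with $|x_j|\le\|x\|_\infty/2$ contribute at most $M\sqrt{2/\pi}\,\|x\|_\infty/2$ in absolute value. Using Assumption \ref{ass:long_time}.1 to control the diffusive second term, this yields the drift condition outside a large enough ball---provided the Lyapunov function is chosen so that the negative ``dominant-coordinate'' term absorbs the ``non-dominant'' noise. Depending on $d$ and the ratio $N/M$, this may require replacing $\|x\|^2$ by a higher power $\|x\|^{2k}$ or by an exponential-type function such as $V(x) = \sum_i \cosh(\alpha x_i)$, which naturally downweights non-dominant coordinates.

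For the minorisation, the conditional independence from Assumption \ref{ass:long_time}.3 yields the factorised transition density
\begin{equation*}
p(x,y) = \prod_{i=1}^d \bigl[g_i(x,z_i) + 1 - g_i(x,-z_i)\bigr] \cdot \frac{1}{\sqrt{\Delta t}\,\sigma_{ii}(x)}\phi(z_i),\qquad z_i := \frac{y_i - x_i}{\sqrt{\Delta t}\,\sigma_{ii}(x)}.
\end{equation*}
Continuity of $g_i$ and the strict bounds $0 < g_i < 1$ from Assumption \ref{assumption.p.derivative:001} imply that on any compact subset of $\mathbb{R}^d \times \mathbb{R}$ one has $g_i \in [\epsilon',1-\epsilon']$, so the bracket is bounded below by $2\epsilon'$; together with Assumption \ref{ass:long_time}.1 this delivers a uniform positive lower bound on $p(x,y)$ over compact $(x,y)$, which is precisely the petite / small set property for every compact $C$.

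The main obstacle lies in the drift condition. Assumption \ref{ass:long_time}.2 only guarantees confining behaviour of $\mu_i$ along the ``dominant'' coordinate(s), which makes the naive quadratic Lyapunov function insufficient when $d$ is large or $N/M$ is unfavourable. The delicate point is to identify a Lyapunov function whose value is governed by the single largest coordinate of $x$, and to carefully control the cross-terms arising from the coordinate-wise Taylor expansion of $V(X_{n+1})$ around $X_n$; the remainder of the proof, once such a $V$ is in hand, is a routine application of the Foster--Lyapunov machinery.
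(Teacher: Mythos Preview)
Your overall strategy---Foster--Lyapunov drift plus minorisation on compacts, exploiting the coordinate-wise product structure of the transition density and the dominant/non-dominant coordinate split suggested by Assumption~\ref{ass:long_time}.2---is exactly the route the paper takes. Your minorisation argument matches the paper's Proposition~\ref{prop:small_set} essentially verbatim (though note the lower bound on $g_i$ comes from Assumption~\ref{ass:long_time}.3, not Assumption~\ref{assumption.p.derivative:001}).

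Where you stop short is precisely where the paper does the work. The paper's Lyapunov function is $V(x)=e^{s\|x\|_\infty}$ for $s$ large, which is the ``governed by the single largest coordinate'' function you were looking for. The key device is the inequality $V(y)\le\sum_{i=1}^d e^{s|y_i|}$, so that
\[
\frac{PV(x)}{V(x)}\le\sum_{i=1}^d \mathbb{E}\bigl[e^{s|Y_i|-s\|x\|_\infty}\bigr].
\]
Non-dominant coordinates ($|x_i|\le\|x\|_\infty/2$) satisfy $|Y_i|\le\|x\|_\infty/2+\sqrt{\Delta t}M|\nu_i|$, so their terms vanish in the $\limsup$. For dominant coordinates ($|x_i|>\|x\|_\infty/2$) one bounds $e^{s|Y_i|-s\|x\|_\infty}\le e^{s|Y_i|-s|x_i|}$ and then uses reverse Fatou together with \eqref{gi_eqation:1} to reduce to a one-sided Gaussian integral, giving $\limsup<\tfrac{4}{s N\sqrt{2\pi\Delta t}}$; taking $s>4d/(N\sqrt{2\pi\Delta t})$ forces each such term below $1/d$, and summing gives the drift condition. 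Your quadratic $V$ would not survive this splitting (the negative term scales like $\|x\|$, not $\|x\|^2$), but your alternative $\sum_i\cosh(\alpha x_i)$ is morally the same as the paper's choice and would succumb to an almost identical argument.
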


\begin{proof}[Proof of Theorem \ref{thm:geometric_ergodicity}]
The proof is postponed until Appendix \ref{proof.geom.ergo:000}. 
\end{proof}

\begin{remark}
Assumption \ref{ass:long_time}.3 is sufficient for Theorem \ref{thm:geometric_ergodicity}, but it may not be necessary: geometric ergodicity can still hold when each $|\mu_i(x)|$ remains bounded as $|x| \to \infty$. Such cases require a different approach and likely stronger assumptions on each $p_i$, as they no longer tend to $0$ or $1$ in the appropriate limit for typical values of $\nu$.  We exclude these cases in favour of having a cleaner statement under milder assumptions on each $p_i$.  % and note that they are often omitted in analyses of sampling algorithms where is known that a bespoke approach will be needed for these cases (see e.g. the discussion of \cite{jarner2000geometric}).
\end{remark}

\subsection{Controlling the bias at equilibrium}\label{subsection:bias.equilibrium:00}

Let us return to the problem of sampling from a distribution of interest $\pi$, using samples from a stochastic differential equation that has $\pi$ as its unique invariant distribution. In Section \ref{sec:geom.ergo:} we have established that in the long-time limit the skew-symmetric scheme approximating the diffusion converges exponentially fast to an equilibrium measure $\pi_{\Delta t}$. The discretisation error induced by the numerical scheme, however, means that $\pi_{\Delta t} \neq \pi$ in general, making the skew-symmetric scheme a non-exact/unadjusted algorithm to sample from $\pi$. On the other hand, since the scheme approximates the diffusion as $\Delta t \rightarrow 0$, there is hope that $\pi_{\Delta t}$ will converge to $\pi$ as $\Delta t \to 0$ in an appropriate sense. Given that the skew-symmetric scheme will eventually sample from $\pi_{\Delta t}$, rather than $\pi$, it is important to understand the discrepancy between $\pi$ and $\pi_{\Delta t}$. This will be studied in this subsection (Theorem \ref{thm.bias.equilibrium:1}). We begin by imposing the following assumption.

\begin{assumption}\label{stoltz.ass.pi:1}
    \begin{enumerate}
        \item \ The measure $\pi$ has moments of all orders. Furthermore, the associated probability density satisfies $\pi \in C^4(\mathbb{R}^d)$, $\pi(x)>0$ for all $x \in \mathbb{R}^d$, and $\log \pi \in C^4_P(\mathbb{R}^d)$. 
        \item \ There exists $a>0$ and $R>0$ such that for all $\| x \| \geq R$,
    \begin{equation}
        \sum_{i=1}^dx_i\mu_i(x) \leq -C \| x \|^a.
    \end{equation}
    \item \ The diffusion $(Y_t)_{t \geq 0}$ in (\ref{multi.dimensional.sde:0}) is time-reversible, has $\pi$ as its unique invariant distribution and converges to $\pi$ in total variation distance as $t \rightarrow \infty$.
    \end{enumerate}
\end{assumption}

\begin{remark}
 Assumption \ref{stoltz.ass.pi:1}.1 holds for example when the density is $\pi(x)\propto \exp\{ -p(\|x\|) \}$ for some polynomial $p$. Returning to our running example of the overdamped Langevin diffusion of \eqref{langevin.running:2}, Assumption \ref{stoltz.ass.pi:1}.2 will typically be satisfied for suitably regular forms of $\pi$ with exponential or lighter tails. In the one-dimensional case, for example, $\mu(x)=(\log \pi)^{'}(x)$, meaning Assumption \ref{stoltz.ass.pi:1}.2 becomes $(\log\pi)^{'}(x) \leq -C x^{a-1}$
 when $x>R$ and $(\log\pi)^{'}(x) \geq C |x|^{a-1}$ when $x < -R$, which is typically satisfied by any density $\pi$ that decays at a faster than polynomial rate.
\end{remark}

Under this assumption we have the following error estimate.

\begin{theorem}[Bias at equilibrium]\label{thm.bias.equilibrium:1}
Under Assumptions  \ref{ass.drift.general:1}, \ref{ass.volatility.general:1}, \ref{assumption.p.derivative:001}, \ref{regularity.coefficients:1}, \ref{ass:long_time} and \ref{stoltz.ass.pi:1}, there exists $L>0$ and $g \in C^{\infty}_P(\mathbb{R}^d)$ with $\int g \,\mathrm{d}\pi=0$, such that for all $\Delta t \in (0,1)$  and $f \in C^{\infty}_P(\mathbb{R}^d)$ there exists $|R_{\Delta t, f}| \leq L$ such that
    \begin{equation}\label{equilibrium.bias.bound:0}
        \int f d\pi_{\Delta t}=\int f d\pi+ \Delta t \int f g d\pi + \Delta t^2 R_{\Delta t, f}. 
    \end{equation}
\end{theorem}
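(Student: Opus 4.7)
The plan is to carry out a Talay--Tubaro-style expansion of the invariant measure bias, using the semigroup expansion of Proposition \ref{semigroup.expansion:1}.2 to relate the discrete invariance identity for $\pi_{\Delta t}$ to a Poisson equation for the continuous generator $L$. The three inputs I would lean on are: (i) the one-step expansion $P_{\Delta t} = I + \Delta t L + \Delta t^2 \mathcal{A}_2 + O(\Delta t^3)$ with polynomially-bounded remainder $K_2$, (ii) a Poisson-equation solver for $L$ delivering solutions in $C^{\infty}_P(\mathbb{R}^d)$, and (iii) uniform-in-$\Delta t$ polynomial moment bounds for $\pi_{\Delta t}$ obtained from the geometric drift produced in the proof of Theorem \ref{thm:geometric_ergodicity} together with Assumption \ref{stoltz.ass.pi:1}.2.

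Given $f \in C^{\infty}_P(\mathbb{R}^d)$, first solve the Poisson equation $L\psi = f - \int f\,d\pi$ with $\int \psi\,d\pi = 0$, via $\psi(x) = -\int_0^\infty \bigl(\mathbb{E}_x[f(Y_t)] - \int f\,d\pi\bigr)\,dt$. To show $\psi \in C^{\infty}_P(\mathbb{R}^d)$ I would combine Theorem \ref{Generalisation.theorem:1} (giving $u_t(x) := \mathbb{E}_x[f(Y_t)] \in C^{\infty}_P(\mathbb{R}^d)$ with $t$-uniform polynomial bounds on derivatives on any finite interval) with an exponential-decay estimate $|u_t(x) - \int f\,d\pi| \leq C(1+\|x\|^m)e^{-\rho t}$ that follows from Assumption \ref{stoltz.ass.pi:1}.3 together with a Lyapunov function built from Assumption \ref{stoltz.ass.pi:1}.2; the exponential decay justifies differentiating under the integral and controls $\psi$ and its derivatives in $C^{\infty}_P$. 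Next, invariance of $\pi_{\Delta t}$ gives $\int (P_{\Delta t}\psi - \psi)\,d\pi_{\Delta t} = 0$, into which I substitute the expansion from Proposition \ref{semigroup.expansion:1}.2 and divide by $\Delta t$ to obtain
\begin{equation}
\int f\,d\pi_{\Delta t} - \int f\,d\pi = -\Delta t \int \mathcal{A}_2\psi\,d\pi_{\Delta t} + \mathcal{R}(\Delta t),
\end{equation}
where $|\mathcal{R}(\Delta t)| \leq \Delta t^2 \int K_2\,d\pi_{\Delta t}$ is controlled by the uniform moment bounds on $\pi_{\Delta t}$.

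To replace $\pi_{\Delta t}$ by $\pi$ in the leading term, I would apply the same argument to $h := \mathcal{A}_2\psi \in C^{\infty}_P(\mathbb{R}^d)$: solving a second Poisson problem and invoking the one-step expansion again yields $\int h\,d\pi_{\Delta t} = \int h\,d\pi + O(\Delta t)$, so
\begin{equation}
\int f\,d\pi_{\Delta t} - \int f\,d\pi = -\Delta t \int \mathcal{A}_2\psi\,d\pi + \Delta t^2 R_{\Delta t, f}.
\end{equation}
To produce the $f$-independent density $g$, note that $\mathcal{A}_2$ annihilates constants (every term in (\ref{second.derivative.numerical.scheme:1}) contains at least one derivative of $f$), so $\tilde g := \mathcal{A}_2^{\ast}\mathbf{1}$, the $L^2(\pi)$-adjoint image of the constant function, lies in $C^{\infty}_P(\mathbb{R}^d)$ and satisfies $\int \tilde g\,d\pi = 0$. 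The reversibility hypothesis in Assumption \ref{stoltz.ass.pi:1}.3 makes $L$ self-adjoint in $L^2(\pi)$, so $L\phi = \tilde g$ admits $\phi \in C^{\infty}_P(\mathbb{R}^d)$ with $\int \phi\,d\pi = 0$, and
\begin{equation}
\int \mathcal{A}_2\psi\,d\pi = \int \psi\,\tilde g\,d\pi = \int \psi\,L\phi\,d\pi = \int (L\psi)\phi\,d\pi = \int f\phi\,d\pi,
\end{equation}
where the normalization $\int \phi\,d\pi = 0$ absorbs the constant $\int f\,d\pi$. Setting $g := -\phi$ yields the desired formula.

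The main obstacle I anticipate is securing the two technical foundations the argument rests on in the non-globally-Lipschitz regime: first, the uniform-in-$\Delta t$ polynomial moment bounds for $\pi_{\Delta t}$ that are needed to control $\pi_{\Delta t}(K_2)$ and the remainders at each step of the bootstrap, which I would extract by revisiting the Lyapunov analysis underlying Theorem \ref{thm:geometric_ergodicity} and combining it with Assumption \ref{stoltz.ass.pi:1}.2 to produce a Lyapunov function of polynomial (or stretched-exponential) type whose one-step drift inequality is $\Delta t$-independent; and second, the $C^{\infty}_P$ regularity of the Poisson solutions $\psi$ and $\phi$, which is more delicate than in the Lipschitz-drift setting and where Theorem \ref{Generalisation.theorem:1} plays a decisive role by supplying polynomial derivative bounds on the diffusion semigroup uniformly on compact time intervals, to be coupled with quantitative exponential ergodicity to obtain convergence of the defining integral.
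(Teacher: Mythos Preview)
Your approach is correct and is essentially the same Talay--Tubaro expansion that underlies the paper's proof. The difference is one of packaging: the paper does not carry out the bootstrap you describe but instead verifies the hypotheses of \cite[Theorem 3.3]{lelievre2016partial} (which encapsulates exactly the argument you sketch) and invokes that result directly. The ingredients checked there --- the one-step expansion from Proposition \ref{semigroup.expansion:1}, the fact that $\mathcal{A}_2^{\ast}\mathbf{1}\in C^{\infty}_{P,0}(\mathbb{R}^d)$, invertibility of $L$ on $C^{\infty}_{P,0}(\mathbb{R}^d)$, and finiteness of moments of $\pi_{\Delta t}$ from Theorem \ref{thm:geometric_ergodicity} --- are precisely the building blocks of your argument, and your identification $g=-L^{-1}\mathcal{A}_2^{\ast}\mathbf{1}$ is the right one.

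The one substantive methodological difference is in how the Poisson equation is solved. You propose the probabilistic representation $\psi(x)=-\int_0^{\infty}(P_tf(x)-\pi(f))\,\mathrm{d}t$ and plan to extract $C^{\infty}_P$ regularity by combining the finite-time derivative bounds of Theorem \ref{Generalisation.theorem:1} with exponential ergodicity of the diffusion. The paper instead appeals to elliptic regularity theory, citing \cite[Theorem 1]{pardoux.veretennikov:01} together with \cite[Theorem 9.19]{gilbarg1977elliptic} to obtain $L^{-1}:C^{\infty}_{P,0}\to C^{\infty}_{P,0}$ directly. Their route is shorter and avoids the delicate step you flag, namely showing that the \emph{derivatives} of $P_t f - \pi(f)$ decay exponentially in $t$ (Theorem \ref{Generalisation.theorem:1} only gives polynomial bounds uniform on compact time intervals, so an additional argument is needed to propagate the exponential decay to derivatives). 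Your plan is workable, but the PDE route sidesteps this obstacle entirely.
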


\begin{proof}[Proof of Theorem \ref{thm.bias.equilibrium:1}]
The proof is presented in Appendix \ref{proof.bias.equilibrium:00}.
\end{proof}

A natural corollary of Theorems \ref{thm:geometric_ergodicity} and \ref{thm.bias.equilibrium:1} is the following result on the long-time behaviour of the unadjusted Barker algorithm (Example \ref{example:Barker}, see also Algorithm 1 of \cite{mauri2024robust}).

\begin{corollary}[Long-time behaviour of the unadjusted Barker algorithm]\label{barker.corollary}
    Assume that the measure $\pi$ has moments of all orders. Furthermore, assume that the associated probability density satisfies $\pi \in C^{\infty}(\mathbb{R}^d)$, $\pi(x)>0$ for all $x \in \mathbb{R}^d$, and $\log \pi \in C^{\infty}_P(\mathbb{R}^d)$. Assume further that there exists $a>0$ and $R>0$ such that for all $\| x \| \geq R$
    \begin{equation*}
         x \cdot \nabla \log \pi(x) \leq -C \| x \|^a,
    \end{equation*}
     and that for all $i \in\{1,...,d\}$
\begin{equation*}
\lim_{\substack{\|x\|\to\infty \\ x_i < -\|x\|_\infty/2}} \partial_i \log \pi(x) 
= \infty, 
~~
\lim_{\substack{\|x\|\to\infty \\ x_i > \|x\|_\infty/2}} \partial_i \log \pi(x)
= -\infty .
\end{equation*}
Let $X^1, X^2, \dots $ be the unadjusted Barker algorithm targeting $\pi$ as introduced in Example \ref{example:Barker} with transition kernel denoted by $P$. Then there exists a measure $\pi_{\Delta}$ with finite moments of all orders, constants $C, \lambda >0$ and $V: \mathbb{R}^d \rightarrow [1,\infty)$ such that for all $n \in \mathbb{N}$ and $x \in \mathbb{R}^d$
\begin{equation*}
    \| P^n(x,\cdot) - \pi_{\Delta t} \|_{TV} \leq C V(x) e^{-\lambda n}.
\end{equation*}
Furthermore, there exists $L>0$ and $g \in C^{\infty}_P(\mathbb{R}^d)$ with $\int g d\pi=0$ such that for all $\Delta t \in (0,1)$ and $f \in C^{\infty}_P(\mathbb{R}^d)$, there exists $|R_{\Delta t , f}| \leq L$ such that 
\begin{equation*}
    \int f d\pi_{\Delta t} = \int f d \pi + \Delta t \int f g d\pi + \Delta t^2 R_{\Delta t , f}.
\end{equation*}
\end{corollary}

\section{Experiments}\label{sec:experiments}

In this section we conduct various numerical experiments to illustrate the weak order, long-time behaviour and stability properties of the skew-symmetric numerical scheme.

\subsection{Multiplicative volatility}

We illustrate the behaviour of the skew-symmetric scheme when simulating a diffusion process with multiplicative volatility. In particular, we consider sampling from the one-dimensional stochastic differential equation
$$
dY_t = -Y_t dt + a Y_t dW_t, \ \ \ Y_0=x,
$$
for various values of $a>0$ and $x>0$.

We compare the performance of the skew-symmetric scheme (with flipping probability set as in (\ref{cdf.p.high.dimensions:1}) using $F(x) := (1+e^{-x})^{-1}$), against the Euler–Maruyama scheme and the tamed Euler approach of \cite{hutzenthaler2012strong}. Performance is assessed by considering the absolute difference between the exact solution of $\mathbb{E}[Y_T]$ and a Monte Carlo average generated using the respective numerical scheme. Here we use the terminal time $T=5$. We report results for three different starting points ($x=0.1 , 1 , 10$) and for two values of $a$ ($a= 0.5 , 2$). In each case the Monte Carlo estimator was generated using $10^5$ i.i.d. samples from the numerical scheme in question. In Figure \ref{fig:experiment4} we report our results on the weak error ($y$-axis) over various step-sizes for the numerical schemes ($x$-axis). 

\begin{figure}[h!]
    \centering
    \includegraphics[width=0.9\linewidth]{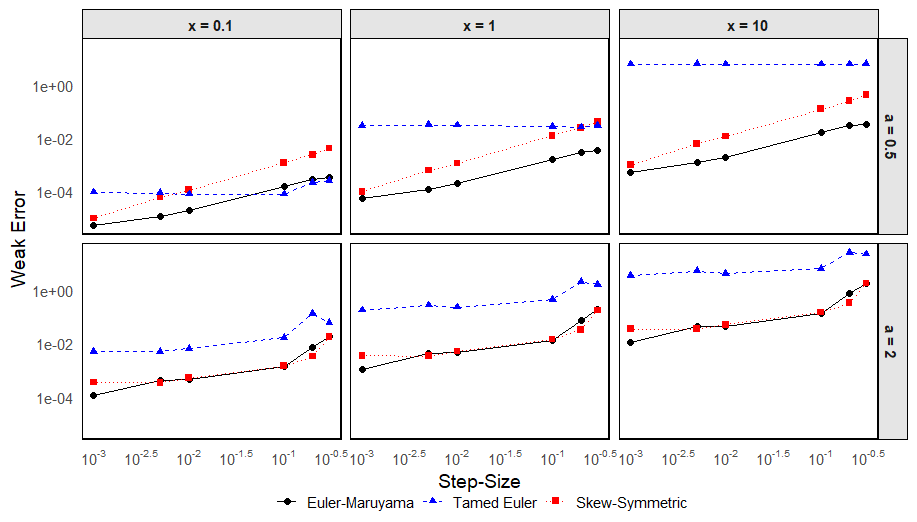}
    \caption{Plots for weak error of the mean for the diffusion process $dY_t = -Y_t dt + a Y_t dW_t$ initialized at $x = 0.1, 1, 10$ and for $a = 0.5, 2$. The weak error is approximated by the absolute difference between the true expectation and the Monte Carlo average over $10^5$ i.i.d. sample trajectories simulated using the Euler--Maruyama, tamed Euler, and skew-symmetric schemes.}
    \label{fig:experiment4}
\end{figure}

The results indicate that, while the volatility takes arbitrarily small values in the area where the process spends most of its time, this fact alone does not influence the algorithmic performance of the skew-symmetric scheme. This may be surprising, as the size of the jump is essentially dictated by only the volatility, and therefore it might be expected that the skew-symmetric scheme performs poorly when this is small. As shown by the results, however, the key driver of algorithm performance is instead the ratio between drift and volatility. In the model considered here, this ratio is given by $a^{-1}$. As can be seen in Figure \ref{fig:experiment4}, in the case $a=2$ the skew-symmetric scheme performs similarly to Euler--Maruyama, while Euler--Maruyama performs better when $a = 0.5$.  It is of interest to note that in the considered cases, the skew-symmetric scheme generally performs better than tamed Euler, particularly when $x = 10$.  It can be shown that in this model the skew-symmetric scheme will track the mean more accurately than Tamed Euler when the initial value $x$ is large.  We illustrate this with a more detailed mathematical comparison between the two approaches in Section 1 of the supplementary material.

\subsection{Poisson random effects model}

Next we compare the performance of the skew-symmetric scheme against Euler--Maruyama and semi-implicit Euler for long-time simulation of an overdamped Langevin diffusion with equilibrium distribution corresponding to the posterior of a Poisson random effects model, which is popular in Bayesian Statistics (e.g. \cite{dey2000generalized}). Using the Euler--Maruyama scheme equates to implementing the unadjusted Langevin algorithm (ULA) to sample from the associated posterior distribution. We compare the performance of ULA with the skew-symmetric scheme, with probability function $p$ chosen as in Example \ref{example.cdf.p:0}.1. In this setting we call this algorithm the {\it unadjusted Barker algorithm (UBA)}. We also compare to the semi-implicit Euler method (e.g. \cite{hu1996semi}), in which the increments are $X^{n+1} = X^n + \Delta t((1-\theta)\mu(X^n) + \theta \mu(X^{n+1})) + \sigma(X^n)\cdot \sqrt{\Delta t}\cdot \nu^n$, with $\nu^n \sim N(0,I)$. We set $\theta = 0.2$ to balance numerical stability and implicitness, and use a tolerance of $10^{-3}$ allowing a maximum number of $500$ fixed point iterations per step to implement the semi-implicit Euler method.

The Poisson random effects model is of the following form: 
\begin{equation*}
\begin{split}
y_{ij} | \eta_i &\sim \text{Poi}(e^{\eta_i}), \quad \quad j = 1, \ldots, J, \\  
\eta_i | \mu &\sim N(\mu,1), \quad \quad\,\, i = 1, \ldots, I, \\
\mu &\sim N(0, \sigma_\mu^2).
\end{split}
\end{equation*}
Defining the state vector $x = (\mu, \eta_1, ..., \eta_{I})$, this results in a posterior distribution with density $\pi(x) \propto \exp \{ -U(x) \}$ and corresponding potential
\begin{equation*}
U(x) = J\sum_i e^{\eta_i} - \sum_{i,j}y_{ij}\eta_i + \frac{1}{2}\sum_i (\eta_i - \mu)^2 + \frac{\mu^2}{2\sigma^2_\mu}.
\end{equation*}
We approximate the resulting overdamped Langevin diffusion $\mathrm{d}X_t = - \nabla U(X_t)\mathrm{d}t + \sqrt{2}\mathrm{d}W_t$, with a key focus on simulating from the distribution with density $\pi(x)\propto e^{-U(x)}$. Note that the drift vector is not globally Lipschitz.

The data $y_{ij}$ for $i=1,...,I$ and $j = 1,..., J$ were simulated using true parameter values $\mu^* = 5$, with each $\eta_i \sim N(\mu^*,1)$. To generate the data we set $I = 50$, $J = 5$ and $\sigma_{\mu} = 10$.  With these choices there are enough observations that the marginal posterior distribution for $\mu$ is essentially centred at $\mu^*$.  We compared the two numerical schemes by assessing the quality of ergodic averages of $\mu$ using mean squared Monte Carlo error for a fixed number of simulation steps.  For each step-size $\Delta t$ we repeated the simulation 100 times and in each case evaluated the ergodic average of $\mu$. We then computed the empirical mean squared error of these estimates as compared to $\mu^*$.  Larger step-sizes decrease the level of correlation between neighbouring iterates, reducing the Monte Carlo variance associated with ergodic averages, but increase the bias.  %(Could a better comparison be done here using some distance measure against 'ground truth samples'?)

We consider two ways to initialise the scheme. The first is to initialise at the true value $\mu^* = 5$. The second is to draw the starting point from a $\text{N}(5, 10^2)$ distribution, often called a \emph{warm start}. In the latter case, we expect the resulting Markov chain to have an initial transient phase before reaching equilibrium, so $10,000$ initial steps were discarded before another $50,000$ were used to evaluate mean squared error.  The choice of $10,000$ iterations to discard was made using pilot runs and visual inspection of trajectories. We note in passing that ULA tended to take a longer time to reach equilibrium than UB for the same choice of step-size.  In both settings each $\eta_i|\mu$ was initialised from the distribution $N(\mu,1)$.

\begin{figure}[ht] 
\centering
\includegraphics[width=10cm]{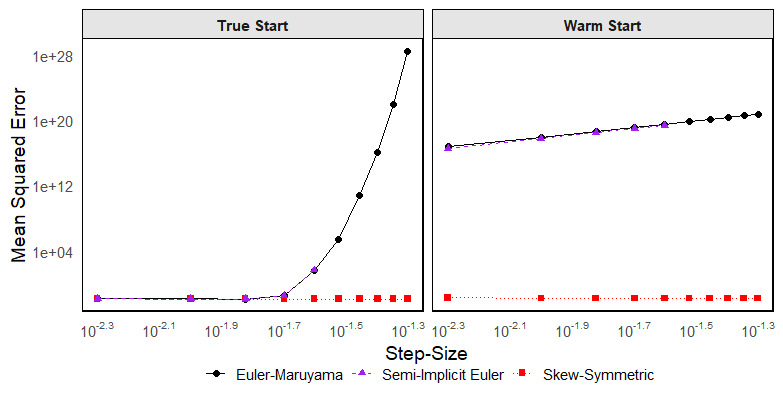} 
\caption{Mean squared error comparisons for the Poisson random effects example. Figure 2(a) (left-hand side) shows mean squared error when initialised at the true value $\mu^*$. Figure 2(b) (right-hand side) is initialised from a $N(5, 10^2)$ sample.}
\label{fig:poisson}
\end{figure}

As shown in Figure \ref{fig:poisson}, when initialised at $\mu^*$ both Euler--Maruyama and semi-implicit Euler perform poorly for $\Delta t \geq 0.03$, while the skew-symmetric scheme performs reasonably for all step-sizes used in the simulation. At small $\Delta t$, the performance of all three schemes is comparable, as expected. The warm start results are more pronounced, as shown in Figure \ref{fig:poisson}, as the skew-symmetric mean squared error remains low whereas the Euler--Maruyama error increases exponentially quickly as $\Delta t$ increases. Furthermore, the semi-implicit Euler scheme exhibits severe numerical instability for larger step-sizes, and is hundreds of times more computationally costly to run.

\subsection{Soft spheres in an anharmonic trap}

In this experiment we simulate particles in two spatial dimensions with soft sphere interactions evolving in an anharmonic trap (see e.g. Section 4.2 of \cite{boffi2023probability}). The particle dynamics are governed by the stochastic differential equation
\begin{align*}
\mathrm{d}Y_t^{(i)} = \,\,
& 4B(\beta - Y_t^{(i)}) \|Y_t^{(i)} - \beta \|^2 \, \mathrm{d}t 
+ \frac{A}{Nr^2} \sum_{j=1}^N (Y_t^{(i)} - Y_t^{(j)})e^{-\| Y_t^{(i)} - Y_t^{(j)} \|^2/2r^2} \, \mathrm{d}t + \sqrt{2D}\,\mathrm{d}W_t,
\end{align*}
for $i = 1, 2, \ldots, N$ where $\beta$ is the position of the trap, $A > 0$ is the strength of the repulsion between spheres with radius $r$, and $B > 0$ is the strength of the trap. We set $\beta = (0,0)^T$, $D = 0.25$, $A = 30$, $r = 0.15$ and $N = 50$. We vary the strength of the trap through parameter $B$ to vary the degree of numerical difficulty associated with the problem. 

We compared the skew-symmetric, Euler--Maruyama and semi-implicit Euler  schemes with step-sizes $\Delta t \in \{0.1, 0.2,0.3, \ldots, 1\}$, $B \in \{0.1, 0.2,0.3, \ldots, 1\}$ and hand-tuned $\theta = 0.2$ for semi-implicit. For each $(\Delta t,B)$-pair the dynamics were simulated for 10 steps, and this simulation was then repeated 100 times. The initial positions of the particles were independently drawn from a uniform distribution on $[-1,1]^2$. We then measured the probability of numerical explosion, meaning the position of one or more spheres reaching $\pm \infty$ (a number too large/small for the computer to record) during the simulation. The proportion of times such an explosion occurred for each chosen $(\Delta t,B)$-pair is shown in Figure \ref{fig:softsphere}.  

\begin{figure}[h] 
\centering
\includegraphics[width=13.5cm]{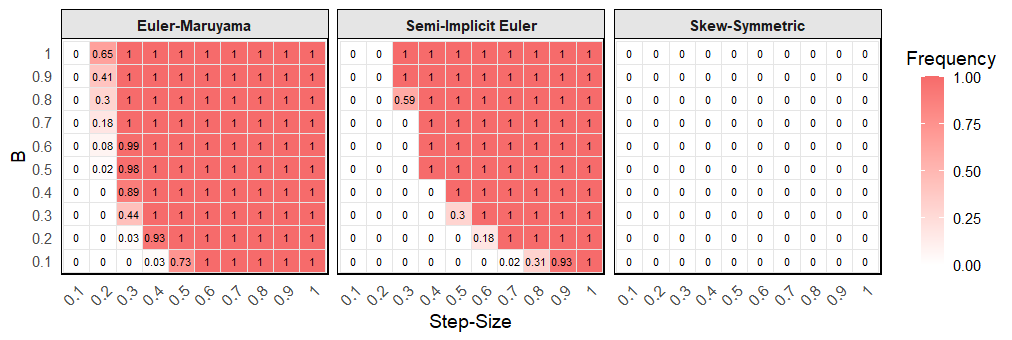} %{heatmap.with.semi.implicit}
\caption{Heat map of numerical explosion frequency for the soft spheres model. The $x$-axis shows chosen $B$ values, with larger $B$ increasing the level of difficulty of the problem, and the $y$ axis shows step-size.}
\label{fig:softsphere}
\centering
\end{figure}

The left-hand side of Figure \ref{fig:softsphere} illustrates that explosion occurred for both the Euler--Maruyama and semi-implicit schemes quite frequently as either $B$ or $\Delta t$ increases. This is because the drift is not globally Lipschitz, meaning the explicit Euler--Maruyama becomes unstable and in the semi-implicit scheme the fixed point iterations fail to converge often.  The increase in $B$ implies a stronger attraction of the trap, which leads to more numerical difficulties and the need for a smaller step-size to ensure non-explosion on the simulation timescales. The skew-symmetric scheme is, however, extremely stable and does not exhibit any explosion events, as shown on the right-hand side of Figure \ref{fig:softsphere}.

\section{Conclusion}\label{conclusion:00}
We have introduced a new simple and explicit numerical scheme for time-homogeneous and uniformly elliptic stochastic differential equations. The skew-symmetric scheme updates its current position using a two-step process: for each coordinate, first the size of the jump is simulated without the influence of the drift, and then the direction of the jump is decided, based on a probability that depends on both the drift and volatility of the current position. This mechanism makes the scheme highly robust when the drift is not globally Lipschitz.  We have established basic results on the weak convergence of the numerical scheme, and, while proving these, we generalised the theory of Milstein and Tretyakov \cite[Theorem 2.2.1]{milstein.tretyakov:21} to non-globally Lipschitz drifts. %Path-wise accuracy results are obtained in Section \ref{strong.convergence:000} for diffusions with constant drift and volatility. %The more general case is work in progress, and outside the scope of this article.

A key concern of the article is long-time simulation of ergodic diffusions. To this end, we have established that the skew-symmetric scheme can be used to generate approximate samples from the invariant distribution of such a diffusion. Under suitable conditions we have established that the numerical scheme converges at a geometric rate to its equilibrium distribution. We have also provided quantitative bounds on the distance between the equilibria of the numerical and exact processes with respect to the numerical step-size. In Section \ref{sec:experiments} we present numerical experiments, providing empirical evidence to support our theoretical results. 

There are many avenues for future work on skew-symmetric schemes. For example, the current scheme is not suitable for sampling hypo-elliptic diffusions, where some of the state coordinates are not directly driven by Brownian motion; thus, the diffusion matrix $\sigma$ is not invertible. Extensions in this direction are currently being considered. The Taming approach can also be applied to non-Lipschitz volatilities, whereas the skew-symmetric scheme as currently defined may perform poorly in this setting, and so extensions in this direction are possible. Finally, it would be interesting to consider a multilevel extension of the scheme.

\section*{Supplementary Material}
Supplementary Material for ``Skew-symmetric schemes for stochastic
differential equations with non-Lipschitz drift: an unadjusted Barker
algorithm”. Doi: To be confirmed.
Includes a discussion on the differences between the skew-symmetric scheme and other schemes, such as Taming. It also includes more simulations, comparing the performance of the skew-symmetric schemes against other algorithms.
The simulation codes can also be found at \url{https://github.com/ShuSheng3927/skew_symmetric_SDE}.

\section*{Acknowledgments}
The authors thank Camilo Garcia--Trillos, Alex Beskos, Jure Vogrinc \& Michael Tretyakov for useful discussions. The research was conducted while GV was a postdoctoral fellow at UCL, under SL. 

\section*{Funding}
SL and GV were supported by an EPSRC New Investigator Award (EP/V055380/1).  RZ was supported by an LMS undergraduate bursary (URB-2023-71). YI was supported by an EPSRC grant Prob AI (EP/Y028783/1).

\bibliographystyle{plainnat}
\bibliography{main}

\newpage

\appendix

\section{Proof of Proposition \ref{semigroup.expansion:1}}\label{proof.of.semigroup.expansion:00}

We begin with the following simple observation.
\begin{lemma}\label{lemma.betas:1}
    If we write $b=(b_1,...,b_d) \in \{ \pm 1 \}^d$, then for any $n \in \mathbb{N}$ and $i_1,...,i_n \in \{ 1,...,d \}$, the following statements hold. If at least one of the $i_j$'s appears an odd number of times amongst $\{ i_1,...,i_n \}$ then
    \begin{equation*}
        \sum_{b \in \{ \pm 1 \}^d}b_{i_1}b_{i_2}...b_{i_n}=0.
    \end{equation*}
   Otherwise 
   \begin{equation*}
        \sum_{b \in \{ \pm 1 \}^d}b_{i_1}b_{i_2}...b_{i_n}=2^d.
   \end{equation*}
\end{lemma}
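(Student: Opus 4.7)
The plan is to reduce the sum over the hypercube $\{\pm 1\}^d$ to a product of independent one-dimensional sums, by grouping factors $b_{i_1} b_{i_2} \cdots b_{i_n}$ according to which coordinate they involve. Specifically, for each $k \in \{1,\ldots,d\}$, let $m_k := \#\{j : i_j = k\}$ denote the multiplicity with which index $k$ appears in the tuple $(i_1, \ldots, i_n)$. Then $b_{i_1} b_{i_2} \cdots b_{i_n} = \prod_{k=1}^d b_k^{m_k}$, and since the summation decouples over the $d$ coordinates,
\begin{equation*}
\sum_{b \in \{\pm 1\}^d} b_{i_1} b_{i_2} \cdots b_{i_n} = \prod_{k=1}^d \left( \sum_{b_k \in \{\pm 1\}} b_k^{m_k} \right).
\end{equation*}

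For each factor on the right, if $m_k$ is even (including $m_k = 0$), then $b_k^{m_k} = 1$ for both choices of $b_k$, contributing $2$; if $m_k$ is odd, then $b_k^{m_k} = b_k$, and the two choices cancel to give $0$. Consequently, the product vanishes as soon as any single $m_k$ is odd, which corresponds exactly to some index appearing an odd number of times amongst $i_1, \ldots, i_n$. Otherwise every factor equals $2$, and the product equals $2^d$, yielding both claims.

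There is no real obstacle here: the argument is essentially a parity observation combined with Fubini over the finite product set $\{\pm 1\}^d$. The only point worth flagging is the convention for $k$ which does not appear in the tuple at all (i.e.\ $m_k = 0$): then $b_k^{0} = 1$, so that coordinate contributes the factor $2$ to the product, which is consistent with the stated value $2^d$ in the even case.
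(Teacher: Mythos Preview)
Your proof is correct and takes essentially the same approach as the paper: both reduce the sum over $\{\pm 1\}^d$ by separating out coordinates and using that $\sum_{b_k \in \{\pm 1\}} b_k^{m_k}$ is $0$ or $2$ according to the parity of $m_k$. Your version is slightly cleaner in that you fully factorise the product over all coordinates at once via the multiplicities $m_k$, whereas the paper isolates a single odd-multiplicity coordinate to force the vanishing; the underlying idea is identical.
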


\begin{proof}[Proof of Lemma \ref{lemma.betas:1}]
    Assume first that at least one of the coordinates (assume without loss of generality that this is $i_n$) appears an odd number of times amongst $i_1,...,i_n$. Then, there exist $k \in \mathbb{N}$ and $j_1,j_2,...,j_m \neq i_n$ such that
    \begin{align*}
        &\sum_{b \in \{ \pm 1 \}^d}b_{i_1}...b_{i_n}=\sum_{b \in \{ \pm 1 \}^d}b_{j_1}...b_{j_m}b^{2k+1}_{i_n} =2^{d-m-1}\!\!\!\!\!\!\sum_{b_{j_1},b_{j_2},... b_{j_m} \in \{ \pm 1 \}} \sum_{b_{i_n} \in \{ \pm 1 \}}b_{j_1}...b_{j_m}b_{i_n}\\
        &=2^{d-m-1}\sum_{b_{j_1},b_{j_2},... b_{j_m} \in \{ \pm 1 \}}b_{j_1}...b_{j_m}\sum_{b_{i_n} \in \{ \pm 1 \}} b_{i_n}=0.
    \end{align*}
    If, on the other hand, every $i_j$ appears an even number of times amongst $i_1,...,i_d$, then, since $b_{i_j} \in \{ \pm 1 \}$, the product between the even number of terms that are $b_{i_j}$ is equal to one. Therefore
    \begin{equation*}
        \sum_{b \in \{  \pm 1\}^d}b_{i_1}...b_{i_n}=\sum_{b \in \{ \pm 1 \}^d}1 = 2^d. 
    \end{equation*}
\end{proof}

We also use the following well-known result for moments of multivariate Gaussians. This is the odd-dimensional version of Isserlis' theorem, (see e.g. \cite[Chapter 8: Exercise 8.3]{friedli.velenik:17}). 

\begin{theorem}[Isserlis' Theorem]\label{isserlis.thm:0}
If $Y=(Y_1,...,Y_d)$ is a $d$-dimensional Gaussian, with $d$ odd then
\begin{equation*}
\mathbb{E}\left[ Y_1 Y_2 \ldots  Y_d \right]=0.
\end{equation*}
\end{theorem}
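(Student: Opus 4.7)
The plan is to prove this via a short symmetry argument which, in the centered Gaussian case, collapses to a one-line calculation. First I would clarify the hypothesis: the identity as written is only true when $Y$ is centered (otherwise the $d=1$ case $Y \sim N(m,1)$ with $m \neq 0$ is an immediate counterexample), and this matches both the convention of the cited Friedli--Velenik reference and the way the lemma is applied in the proof of Proposition \ref{semigroup.expansion:1}, where $Y$ will be a standard Gaussian increment.

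The key step will be to exploit the fact that for a centered Gaussian vector $Y$ with covariance matrix $\Sigma$, the reflected vector $-Y$ is also centered Gaussian with the same covariance $\Sigma$, and hence has the same distribution as $Y$. Using this distributional identity,
\begin{equation*}
\mathbb{E}[Y_1 Y_2 \cdots Y_d] \;=\; \mathbb{E}\bigl[(-Y_1)(-Y_2)\cdots(-Y_d)\bigr] \;=\; (-1)^d\,\mathbb{E}[Y_1 Y_2 \cdots Y_d].
\end{equation*}
Since $d$ is odd we have $(-1)^d = -1$, so the expectation equals its own negative and must vanish. Finiteness of the expectation is automatic since Gaussians have moments of every order, so interchanging the sign inside the expectation is legitimate.

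There is essentially no serious obstacle here; the invariance of centered Gaussians under $Y \mapsto -Y$ does all the work. A heavier but more structural alternative would be to expand the characteristic function $\varphi(t) = \exp(-\tfrac{1}{2} t^\top \Sigma t)$ as a power series in $t$: only even total powers of the components of $t$ appear, so differentiating an odd number of times and evaluating at $t=0$ returns zero. This route would additionally yield the full Wick/Isserlis pairing formula in the even-order case, but is strictly more than what is needed for the application in Proposition \ref{semigroup.expansion:1}.
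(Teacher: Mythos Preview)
Your symmetry argument is correct and is the standard one-line proof. Note, however, that the paper does not actually prove this statement: it is simply quoted as a well-known result with a reference to \cite[Chapter 8: Exercise 8.3]{friedli.velenik:17}, so there is no ``paper's own proof'' to compare against. Your observation that the identity requires $Y$ to be centered is well taken and matches both the cited reference and the only use of the theorem in the paper (applied to $\nu \sim N(0,I_d)$ in the proof of Proposition~\ref{semigroup.expansion:1}).
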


We can now prove Proposition  \ref{semigroup.expansion:1}. 

In the next two subsections we will prove the first and the second parts of Proposition \ref{semigroup.expansion:1} respectively. Recall the definition of $q (x, \sqrt{\Delta t} v) \equiv {p} (x, v)$ in Remark \ref{remark.q.notation:1}.

\subsection{Proof of Proposition \ref{semigroup.expansion:1}.1}

\begin{proof}[Proof of Proposition \ref{semigroup.expansion:1}.1]
Recall that after one step, the process starting from $x \in \mathbb{R}^d$ will transition to 
% $X_{1}= x + \xi \ast b$, 
$X_{1}= x + \Delta t^{1/2} \sigma (x) 
\cdot (b \ast \nu)$
where $\nu \sim N(0,I_d)$ and for all $i \in \{ 1,...,d \}$,  
% [$\xi_i=\sqrt{\Delta t}   \sigma_{i,i}(x) \nu_i$],  and 
$b_i=+1$ with probability 
$p_i (x, \nu)$
or $b_i=-1$ with probability 
$1 - p_i (x, \nu)$. 
Recall that for two vectors $c, d \in \mathbb{R}^d$, we write $c \ast d \in \mathbb{R}^d$ to denote the vector of element-wise products between $c$ and $d$.

Then we can write 
\begin{align}\label{semigroup.expansion.product:1}
\mathbb{E}_x\left[ f(X_{1})-f(x) \right]
= \sum_{b \in \{ \pm 1 \}^d}\mathbb{E}_{\nu} \Bigg[ &\left(f \bigl(x+ \Delta t^{1/2} \sigma(x) (b \ast \nu) \bigr) 
-f(x) \right)
\\
&\cdot \prod_{i=1}^d \left[ q_i(x, \Delta t^{1/2} \nu) \mathbb{I}(b_i=1)+(1- q_i(x, \Delta t^{1/2} \nu))\mathbb{I}(b_i=-1) \right] \Bigg] \nonumber.
\end{align}
Let us introduce the following notation. For any $n \in \mathbb{N}$, and any vector $w=(w_1,...,w_d) \in \mathbb{R}^d$, we write 
\begin{equation*}
    f^{(n)}(y)\left( w \right)^n= \sum_{i_1,...,i_n \in \{ 1,...,d \} }\partial^y_{i_1}\partial^y_{i_2} ... \partial^y_{i_n}f(y) w_{i_1}w_{i_2}...w_{i_n}.
\end{equation*}
Then, a Taylor expansion with respect to $\Delta t^{1/2}$ around $0$ gives
    \begin{align}\label{semigroup.expansion.f.expansion:1}
& f(x+\Delta t^{1/2} \sigma(x) (b \ast \nu))  -f(x)\\
    & =  \Delta t^{1/2} f^{(1)}(x)( \sigma(x) (b \ast \nu))^1 + \frac{1}{2}\Delta t^{2/2} f^{(2)}(x)( \sigma(x) (b \ast \nu) )^2
    \nonumber
    \\ &+ \frac{1}{3!} \Delta t^{3/2} f^{(3)}(x)(\sigma(x) (b \ast \nu))^3
    + \frac{1}{4!}\Delta t^{4/2} f^{(4)}(x)(\sigma(x) (b \ast \nu))^4 
    \nonumber
    \\
    & + \frac{1}{5!}\Delta t^{5/2} f^{(5)}(x)(\sigma(x) (b \ast \nu))^5
    + \frac{1}{6!}\Delta t^{6/2} f^{(6)}(x+\tau_1 \sigma(x) (b \ast \nu))( \sigma(x) (b \ast \nu))^6,
    \nonumber
    \end{align}
    for some $\tau_1 \in (0,\Delta t^{1/2})$, and 
    another Taylor expansion of $q_i( x, \sqrt{\Delta t} \nu)$ with respect to 
    $\Delta t^{1/2}$ around $0$ gives $q_i( x, \sqrt{\Delta t} \nu) = q_i ( x, 0) + \mathscr{P}_i (x, \sqrt{\Delta t} \nu)$ with 
    \begin{align}\label{def.Pi:1}
    \mathscr{P}_i (x, \sqrt{\Delta t} \nu) 
    & = \Delta t^{1/2} q_i^{(1)}(x,0)(\nu)^1 \!+\!\frac{1}{2}\Delta t^{2/2} q_i^{(2)}(x,0)(\nu)^2 \! + \frac{1}{3!}\Delta t^{3/2} q_i^{(3)}(x,0) (\nu)^3 \\
    & + \!\frac{1}{4!}\!\Delta t^{4/2} q_i^{(4)}(x,0)(\nu)^4 \! 
    + \frac{1}{5!}\Delta t^{5/2}q_i^{(5)}(x,0)(\nu)^5 \!+  \!\frac{1}{6!}\Delta t^{6/2} q_i^{(6)} (x,\tau_2 \nu)(\nu)^6 \nonumber
    \end{align}
    for some $\tau_2 \in (0,\Delta t^{1/2})$, where all the derivatives for 
    %$q_i(x,\xi)$
    $q_i (x, \xi)$
    are taken with respect to $\xi$. Noting that %$q_i(x,0)=\frac{1}{2}$,
    $q_i (x, 0) = \tfrac{1}{2}$, we see that
\begin{align}\label{semigroup.expansion.p.expansion:1}
q_i(x,\sqrt{\Delta t} \nu) \mathbb{I}(b_i=1)+ \bigl(1-q_i(x, \sqrt{\Delta t} \nu) \bigr)\mathbb{I}(b_i=-1) =\frac{1}{2} + b_i \times \mathscr{P}_i (x, \sqrt{\Delta t} \nu).   
\end{align}
    
    Combining \eqref{semigroup.expansion.f.expansion:1} and \eqref{semigroup.expansion.p.expansion:1} we view the quantity inside the expectation of \eqref{semigroup.expansion.product:1} as a polynomial in $\Delta t^{1/2}$. %, up to an error term. 
    We will now identify the coefficients of this polynomial.
    First of all, we consider the coefficient of the term $\Delta t^{1/2}$. This is
    \begin{align*}
        f^{(1)}(x)(\sigma(x) (b \ast \nu))^1 \left( \frac{1}{2} \right)^d= \left( \frac{1}{2} \right)^d \sum_{i, j =1}^d\partial_{i} f(x) \sigma_{i, j}(x) \nu_j b_j.
    \end{align*}
    Taking the expectation over $\nu \sim N (0, I_d)$, we see that this is equal to zero. 
    % , since $\sigma(x)  \nu \sim N(0,\sigma\sigma^T)$.
    
    Next, we consider the coefficient of $\Delta t$. Considering the different ways in which the $d+1$ factors in \eqref{semigroup.expansion.product:1} can give order $\Delta t$ though the Taylor expansions \eqref{semigroup.expansion.f.expansion:1} and \eqref{semigroup.expansion.p.expansion:1}, we see that the coefficient is
\begin{align}\label{delta.term:1}
    & f^{(1)}(x)( \sigma (x) (b \ast \nu))^1 \sum_{k=1}^d b_k q_k^{(1)}(x,0)(\nu)^1 \left( \frac{1}{2} \right)^{d-1}
      \!\! + \frac{1}{2!}f^{(2)}(x)(\sigma (x) (b \ast \nu))^2 \left( \frac{1}{2} \right)^d \nonumber \\
    & \equiv C_2^{(\mathrm{I})} (b, \nu) +  C_2^{(\mathrm{II})} (b, \nu). 
    \end{align}
    Let's consider each part of the sum separately. We have that 
\begin{align*}
    C_2^{(\mathrm{I})} (b, \nu) 
    & = \left( \sum_{i, j  = 1}^d \partial_i f(x) \sigma_{i, j} (x) b_j \nu_j  \right) 
    \left( \sum_{k,l  = 1}^d b_k \partial_l^{\xi}q_k(x,0) \nu_l \right) \left( \frac{1}{2} \right)^{d-1} \\ 
    & = \left( \frac{1}{2} \right)^{d-1} 
     \sum_{i, j, k , l = 1}^d \partial_i  f (x) \sigma_{i, j} (x) \partial_l^{\xi} q_k (x,0)  v_j v_l b_j b_k. 
\end{align*}
Summing over $b \in \{ \pm 1 \}^d$ to obtain \eqref{semigroup.expansion.product:1}, we get 
\begin{align*} %\label{delta.term:2}
\sum_{b \in \{ \pm 1 \}^d} C_2^{(\mathrm{I})} (b, \nu) 
= 2 \sum_{i, j, l = 1}^d \partial_i  f (x) \sigma_{i, j} (x) \partial_l^{\xi}q_j(x,0)  v_j v_l, \end{align*}
where the equality is due to the following result from Lemma \ref{lemma.betas:1}:  
% we have
\begin{equation}\label{delta.term:3}
\sum_{b \in \{ \pm 1 \}^d } b_j b_k=2^d\mathbb{I}(j=k). 
\end{equation}
Taking the expectation over $\nu \sim N(0,I_d)$, we see that 
\begin{align} \label{delta.term:4} 
& \sum_{b \in \{ \pm 1 \}^d } \mathbb{E}_\nu \left[ C_2^{(\mathrm{I})} (b, \nu) \right] 
= 2 \sum_{i, j = 1}^d \partial_i  f (x) \sigma_{i, j} (x) \partial_j^{\xi}q_j(x,0) \nonumber \\ 
& =  \sum_{i = 1}^d \partial_i  f (x) \left( 2 \sum_{j=1}^d \sigma_{i, j} (x) \partial_j^{\xi}q_j(x,0) \right)
= \mu(x) \cdot \nabla f(x),   
\end{align}
where in the last equality we have used Assumption \ref{assumption.p.derivative:001}:  $\partial_j^\xi q_j (x, 0) = \tfrac{1}{2} \Psi_j (x)$ for all $j \in \{1,\cdots, d\}$, leading to $2 \sum_{j = 1}^d \sigma_{i, j} (x) \partial_j^{\xi}q_j(x,0) = \mu_i(x)$, for all $i \in \{ 1, \dots , d \}$.

For the other part of the sum in \eqref{delta.term:1} we have
\begin{align*}
C_2^{(\mathrm{II})} (b, \nu) 
= \frac{1}{2^{d+1}} \sum_{i, j, k, \ell = 1}^d \partial_{i} \partial_{j} f(x) \sigma_{i, k} (x) \sigma_{j, \ell} (x) \,  v_k v_{\ell} \, b_k b_\ell. 
\end{align*}
Summing over $b \in \{ \pm 1 \}^d$ with (\ref{delta.term:3}) and taking the expectation w.r.t. $\nu$, we get that 
\begin{align*}
\sum_{b \in \{ \pm 1\}^d} \mathbb{E}_\nu \bigl[ C_2^{(\mathrm{II})} (b, \nu) \bigr]
= \frac{1}{2} \sum_{i, j, k = 1}^d 
\partial_i \partial_j f(x) \sigma_{i, k} (x)  \sigma_{j, k} (x) \mathbb{E}_\nu \left[ (\nu_k)^2 \right]
= \frac{1}{2} \mathrm{Tr} \left( \nabla^2 f(x) \sigma (x) \sigma^\top (x) \right).  
\end{align*}
Combining this with \eqref{delta.term:4}, we see that the coefficient of $\Delta t$ is $Lf(x)$, where $L$ as in \eqref{generator.diffusion:1} is the generator of the diffusion we are approximating.

We now consider the coefficient of the term $\Delta t^{3/2}$. Considering the different ways in which the $d+1$ factors in \eqref{semigroup.expansion.product:1} can give order $\Delta t^{3/2}$ through the Taylor expansions \eqref{semigroup.expansion.f.expansion:1} and \eqref{semigroup.expansion.p.expansion:1}, we see that the coefficient is
\begin{align*}
& f^{(1)}(x)((\sigma(x) (b \ast \nu) )^1\sum_{i=1}^d\frac{1}{2!}b_i q_i^{(2)}(x,0)(\nu)^2 
\left( \frac{1}{2} \right)^{d-1} \\
&+  f^{(1)}(x)((\sigma(x) (b \ast \nu))^1 \sum_{i \neq j}b_i q_i^{(1)}(x,0)(\nu)^1 b_j q_j^{(1)}(x,0)(\nu)^1 \left(  \frac{1}{2} \right)^{d-2} \\
&+ \frac{1}{2!}f^{(2)}(x)((\sigma(x) (b \ast \nu))^2 \sum_{i=1}^d b_i 
q_i^{(1)} (x,0)(\nu)^1 \left( \frac{1}{2} \right)^{d-1} \\
&+ \frac{1}{3!}f^{(3)}(x)((\sigma(x) (b \ast \nu))^3 \left( \frac{1}{2} \right)^{d}.
\end{align*} 
Taking expectations over $\nu$, all four summands involve terms of the form $\mathbb{E}_{\nu}\left[ \nu_{i_1} \nu_{i_2} \nu_{i_3} \right]$, for $i_1,i_2,i_3 \in \{ 1,2,...,d \}$. From Theorem \ref{isserlis.thm:0}, since $\nu \sim N(0,I_d)$, these expectations are zero, so the coefficient of $\Delta t^{3/2}$ is zero. 

Under our assumptions, the rest of the terms are shown to be bounded by $\Delta t^2 K (x)$ for some $K \in C_p^{\infty} (\mathbb{R}^d)$ using similar arguments to those provided below for proving the higher order error expansion (\ref{semigroup.expansion:2}). We thus conclude the proof of   (\ref{semigroup.expansion.basic:2}).
\end{proof}

\subsection{Proof of Proposition \ref{semigroup.expansion:1}.2}

\begin{proof}[Proof of Proposition \ref{semigroup.expansion:1}.1]

We now assume that the volatility matrix is diagonal. Following the proof of the first part of Proposition \ref{semigroup.expansion:1}
we recover the same coefficients for the polynomial on the terms $\Delta t^{1/2}, \Delta t , \Delta t^{3/2}$. We now use equations (\ref{semigroup.expansion.product:1}),(\ref{semigroup.expansion.f.expansion:1}) and (\ref{semigroup.expansion.p.expansion:1}) in order to identify the coefficients of the terms $\Delta t^2 , \Delta t^{5/2}$ and $\Delta t^3$.

We first focus on the coefficient of the term $\Delta t ^{5/2}$. Using a similar argument to the case of $\Delta t^{3/2}$, we see that the coefficient  involves summing terms of the form $\mathbb{E}_{\nu}\left[ \nu_{i_1} \nu_{i_2} \nu_{i_3} \nu_{i_4} \nu_{i_5} \right]$, for $i_1,i_2,i_3, i_4, i_5 \in \{ 1,2,...,d \}$. All these terms are zero due to Isserlis' Theorem, therefore the coefficient of $\Delta t^{5/2}$ is zero.

We now consider the coefficient of the term $\Delta t^2$. Considering the different ways in which the $d+1$ factors in \eqref{semigroup.expansion.product:1} can give order $\Delta t^{4/2}$ through the Taylor expansions \eqref{semigroup.expansion.f.expansion:1} and \eqref{semigroup.expansion.p.expansion:1}, we see that the coefficient is
\begin{align}\label{delta.term:6}
&f^{(1)}(x)(\sigma(x)(\nu\ast b))^1\sum_{i=1}^d\frac{1}{3!}b_i {q}_i^{(3)}(x,0)(\nu)^3 \left( \frac{1}{2} \right)^{d-1} \\
&+f^{(1)}(x)(\sigma(x)(\nu\ast b))^1\sum_{i \neq j}\frac{1}{2!}b_iq_i^{(2)}(x,0)(\nu)^2 b_jq_j^{(1)}(x,0)(\nu)^1 \left( \frac{1}{2} \right)^{d-2} \nonumber \\
&+f^{(1)}(x)(\sigma(x)(\nu\ast b))^1\sum_{i \neq j \neq k \neq i} b_iq_i^{(1)}(x,0)(\nu)^1 b_jq_j^{(1)}(x,0)(\nu)^1 b_kq_k^{(1)}(x,0)(\nu)^1  \left( \frac{1}{2} \right)^{d-3} \nonumber \\
&+\frac{1}{2!}f^{(2)}(x)(\sigma(x)(\nu\ast b))^2\sum_{i=1}^d \frac{1}{2!}b_iq_i^{(2)}(x,0)(\nu)^2 \left( \frac{1}{2} \right)^{d-1} \nonumber \\
&+\frac{1}{2!}f^{(2)}(x)(\sigma(x)(\nu\ast b))^2\sum_{i \neq j} b_iq_i^{(1)}(x,0)(\nu)^1b_jq_j^{(1)}(x,0)(\nu)^1 \left( \frac{1}{2} \right)^{d-2}  \nonumber \\
&+\frac{1}{3!}f^{(3)}(x)(\sigma(x)(\nu\ast b))^3\sum_{i=1}^d b_iq_i^{(1)}(x,0)(\nu)^1 \left( \frac{1}{2} \right)^{d-1}  \nonumber\\
&+\frac{1}{4!}f^{(4)}(x)(\sigma(x)(\nu\ast b))^4\left( \frac{1}{2} \right)^{d}.  \nonumber
\end{align}
Let us consider each of those 7 terms separately. For the first term, we write
\begin{align*}
f^{(1)}(x)(\sigma(x)(\nu\ast b))^1\sum_{i=1}^d\frac{1}{3!}b_iq_i^{(3)}(x,0)(\nu)^3 \left( \frac{1}{2} \right)^{d-1}& \\
=\frac{1}{3! 2^{d-1}}\!\!\!\!\!\!\!\!\sum_{i,k,i_1,i_2,i_3=1}^d\!\!\!\!\!\!\!\partial_kf(x)b_k\sigma_{k,k}(x) \nu_k b_i\partial^{\xi}_{i_1}\partial^{\xi}_{i_2}\partial^{\xi}_{i_3}q_i(x,0) \nu_{i_1}&\nu_{i_2}\nu_{i_3}.
\end{align*}
Summing over $b \in \{ \pm 1 \}^d$ and using \eqref{delta.term:3}, we get 
\begin{align*}
&\frac{1}{3! 2^{d-1}}\sum_{i,k,i_1,i_2,i_3=1}^d\partial_kf(x)\sigma_{k,k}(x) \nu_k\partial^{\xi}_{i_1}\partial^{\xi}_{i_2}\partial^{\xi}_{i_3}q_i(x,0) \nu_{i_1} \nu_{i_2} \nu_{i_3}\sum_{b \in\{ \pm 1 \}^d}b_ib_k\\
&=\frac{2^d}{3! 2^{d-1}}\sum_{i,i_1,i_2,i_3=1}^d\partial_if(x)\sigma_{i,i}(x) \nu_i\partial^{\xi}_{i_1}\partial^{\xi}_{i_2}\partial^{\xi}_{i_3}q_i(x,0) \nu_{i_1}\nu_{i_2}\nu_{i_3}.
\end{align*}
Taking the expectation over $\nu$, we obtain 
\begin{equation*}
\frac{1}{3} \sum_{i,i_1,i_2,i_3=1}^d\partial_if(x)\partial^{\xi}_{i_1}\partial^{\xi}_{i_2}\partial^{\xi}_{i_3}q_i(x,0)\sigma_{i,i}(x)\mathbb{E}_{\nu}\left[ \nu_i \nu_{i_1}\nu_{i_2} \nu_{i_3}\right].
\end{equation*}
Consider all cases regarding the equality or inequality of 
$i$,$i_1$,$i_2$,$i_3$. There are three distinctions: The first case is that at least one of the $i$,$i_1$,$i_2$,$i_3$ is different from the other three, in which case $\mathbb{E}\left[ \nu_i \nu_{i_1} \nu_{i_2} \nu_{i_3} \right]=0$. The second case is that all of the $i, i_1, i_2, i_3$ are equal. Then $\mathbb{E}\left[ \nu_i \nu_{i_1} \nu_{i_2} \nu_{i_3} \right]= \mathbb{E}\left[ \nu_i^4 \right]=3$.  We then get  
\begin{equation}\label{weird.counting:1}
\frac{1}{3} \sum_{i=1}^d\partial_if(x)\partial_{i}^{\xi}\partial_{i}^{\xi}\partial_{i}^{\xi}q_i(x,0) 3 \sigma_{i,i}(x)= \sum_{i=1}^d\partial_if(x)\partial_{i}^{\xi}\partial_{i}^{\xi}\partial_{i}^{\xi}q_i(x,0)  \sigma_{i,i}(x). 
\end{equation}
The last case is that two of the $\{ i, i_1, i_2, i_3 \}$ are equal and the other two are equal and different from the other two (that is, there are two pairs). In this case, by counting three times (since either of $i_1,i_2,i_3$ could be the one equal to $i$) we get
\begin{equation}\label{weird.counting:2}
\frac{1}{3}\sum_{i \neq k}^d\partial_if(x)\partial^{\xi}_{i}\partial^{\xi}_{k}\partial^{\xi}_{k}q_i(x,0) 3 \sigma_{i,i}(x)  =\sum_{i \neq k}^d\partial_if(x)\partial^{\xi}_{i}\partial^{\xi}_{k}\partial^{\xi}_{k}q_i(x,0)  \sigma_{i,i}(x).  
\end{equation}
     Adding \eqref{weird.counting:1} and \eqref{weird.counting:2}, we get that the first term of \eqref{delta.term:6} is equal to
     \begin{equation*}
         \sum_{i,k=1}^d\partial_if(x)\partial^{\xi}_{i}\partial^{\xi}_{k}\partial^{\xi}_{k}q_i(x,0)  \sigma_{i,i}(x), 
     \end{equation*}
     which is the first term of $\mathcal{A}_2$ as in \eqref{second.derivative.numerical.scheme:1}.
     
     For the second term of \eqref{delta.term:6} we write
\begin{align*}
&\frac{1}{2^{d-1}}f^{(1)}(x)(\sigma(x)(\nu\ast b))^1\sum_{i \neq j}b_iq_i^{(2)}(x,0)(\nu)^2 b_jq_j^{(1)}(x,0)(\nu)^1  \\
&=\frac{1}{2^{d-1}}\sum_{\substack{i,j,k \\ i \neq j}} \partial_kf(x)b_k \sigma_{k,k}(x) \nu_k b_iq_i^{(2)}(x,0)(\nu)^2 b_jq_j^{(1)}(x,0)(\nu)^1. 
\end{align*} 
Recall that due to Lemma \ref{lemma.betas:1} we have
\begin{equation}\label{delta.term:7}
\sum_{b \in \{ \pm 1 \}^d}b_ib_jb_k=0,
\end{equation}
therefore, on summing over $b \in \{ \pm 1 \}^d$, we see that this term is zero.

The third term in \eqref{delta.term:6} is
\begin{equation}\label{delta.term:8}
f^{(1)}(x)(\sigma(x)(\nu\ast b))^1 \sum_{i \neq j \neq k \neq i}b_iq_i^{(1)}(x,0)(\nu)^1 b_jq_j^{(1)}(x,0)(\nu)^1 b_kq_k^{(1)}(x,0)(\nu)^1  \left( \frac{1}{2} \right)^{d-3}.
\end{equation}
Expanding 
\begin{equation*}
f^{(1)}(x)(\sigma(x)(\nu\ast b))^1=\sum_{i_1=1}^d\partial_{i_1}f(x)b_{i_1}\sigma_{i_1,i_1}(x)\nu_{i_1},
\end{equation*}
we see that when we sum over all $b \in \{ \pm 1 \}^d$, inside the sum of (\ref{delta.term:8}) there will appear the quantity $\sum_{b \in \{ \pm 1 \}^d }b_{i_1}b_ib_jb_k$, where $i \neq j \neq k \neq i$. Due to  Lemma \ref{lemma.betas:1} this term will be zero, so (\ref{delta.term:8}) will also be zero.

For the fourth term of \eqref{delta.term:6} we write
\begin{align*}
&\frac{1}{2^{d+1}}f^{(2)}(x)(\sigma(x)(\nu\ast b))^2\sum_{i=1}^d b_iq_i^{(2)}(x,0)(\nu)^2\\
&=\frac{1}{2^{d+1}}\sum_{i,j,k=1}^d\partial_j\partial_kf(x)b_j \sigma_{j,j}(x)\nu_j b_k \sigma_{k,k}(x) \nu_k b_iq_i^{(2)}(x,0)(\nu)^2.
\end{align*}
When summing over $b \in \{ \pm 1 \}^d$ we get a term of the form $\sum_{b \in \{ \pm 1 \}^d}b_ib_jb_k$ which is zero due to \eqref{lemma.betas:1}, so this term is also zero.

For the fifth term of (\ref{delta.term:6}) we write
\begin{equation*}
\begin{split}
&\frac{1}{2^{d-1}}f^{(2)}(x)(\sigma(x)(\nu\ast b))^2\sum_{i \neq j}b_iq_i^{(1)}(x,0)(\nu)^1b_jq_j^{(1)}(x,0)(\nu)^1 \\
&\!\!\!=\!\!\frac{1}{2^{d-1}}\!\!\!\!\!\sum_{\substack{i,j,i_1,i_2 \\ i \neq j}} \!\!\!\!\partial_{i_1}\partial_{i_2}f(x)b_{i_1}\sigma_{i_1,i_1}\!(x) \nu_{i_1}b_{i_2}\sigma_{i_2,i_2}\!(x)\nu_{i_2}b_iq_i^{(1)}\!(x,0)(\nu)^1b_jq_j^{(1)}\!(x,0)(\nu)^1\!\!.
\end{split}
\end{equation*}
On summing over $b \in \{ \pm 1 \}^d$ we get terms of the form $\sum_{b \in \{ \pm  1 \}^d }b_{i_1}b_{i_2}b_ib_j$ with $i \neq j$. Due to Lemma \ref{lemma.betas:1}, for this not to be zero we need $i_1=i$ and $i_2=j$ or the other way around, in which cases the sum equals $2^d$. Due to symmetry, after summing over $b \in \{ \pm 1 \}^d$, we can write the last expression as 
\begin{align*}
&\frac{2\cdot 2^d}{2^{d-1}}\sum_{ i \neq j} \partial_{i}\partial_{j}f(x)\sigma_{i,i}(x) \nu_{i}\sigma_{j,j}(x) \nu_{j}q_i^{(1)}(x,0)(\nu)^1q_j^{(1)}(x,0)(\nu)^1=\\
&4\sum_{\substack{i,j,i_1,i_2 \\ i \neq j}}\partial_{i}\partial_{j}f(x)\sigma_{i,i}(x)\nu_{i}\sigma_{j,j}(x) \nu_{j} \partial^{\xi}_{i_1}q_i(x,0)  \nu_{i_1}\partial^{\xi}_{i_2}q_j(x,0) \nu_{i_2}.
\end{align*}
On taking the expectation over $\nu$ we get
\begin{align*} &4\sum_{\substack{i,j,i_1,i_2 \\ i \neq j}}\partial_{i}\partial_{j}f(x) \partial^{\xi}_{i_1}q_i(x,0)\partial^{\xi}_{i_2}q_j(x,0) \sigma_{i,i}(x) \sigma_{j,j}(x) \mathbb{E}\left[ \nu_i \nu_j \nu_{i_1} \nu_{i_2} \right].
\end{align*}
In order for the expectations not to be zero, we must have $i_1=i$ and $i_2=j$, or $i_1=j$ and $i_2=i$. The first case gives
\begin{equation*}
4 \sum_{i \neq j} \partial_{i}\partial_{j}f(x) \partial^{\xi}_{i}q_i(x,0)\partial^{\xi}_{j} q_j(x,0) \sigma_{i,i}(x) \sigma_{j,j}(x), 
\end{equation*}
whereas the second case gives 
\begin{equation*}
4 \sum_{i \neq j} \partial_{i}\partial_{j}f(x) \partial^{\xi}_{j}q_i(x,0)\partial^{\xi}_{i}q_j(x,0) \sigma_{i,i}(x) \sigma_{j,j}(x).
\end{equation*}
Adding these two terms, using \eqref{condition.p.derivative:0}, and the fact that $\sigma$ is diagonal, we get the second and third term of $\mathcal{A}_2$ as in \eqref{second.derivative.numerical.scheme:1}.

For the sixth term of \eqref{delta.term:6} we write
\begin{equation}\label{delta.term:6.5}
    \begin{split}
&\frac{1}{3 \cdot 2^d}f^{(3)}(x)(\sigma(x)(\nu\ast b))^3\sum_{i=1}^db_iq_i^{(1)}(x,0)(\nu)^1  \\
=&\frac{1}{3 \cdot 2^d}\!\!\!\!\!\!\sum_{i_1,i_2,i_3,i=1}^d\!\!\!\!\!\!\partial_{i_1}\partial_{i_2}\partial_{i_3}f(x)b_{i_1}\sigma_{i_1,i_1}\!(x) \nu_{i_1}b_{i_2}\sigma_{i_2,i_2}\!(x) \nu_{i_2} b_{i_3}\sigma_{i_3,i_3}(x) \nu_{i_3}b_iq_i^{(1)}\!(x,0)(\nu)^1. \nonumber
\end{split}
\end{equation}     
On summing over $b \in \{ \pm 1 \}^d$ we get terms of the form $\sum_{b \in \{ \pm 1 \}^d}b_{i_1}b_{i_2}b_{i_3}b_i$. Due to Lemma \ref{lemma.betas:1}, there are two ways for this to not be zero (and be equal to $2^d$). We need either $i=i_1=i_2=i_3$, or $i$ is equal to one of the $i_j$'s and the other two are also equal (but not equal to $i$). We consider the two cases separately. The first case gives
\begin{equation*}
\frac{2^d}{3 \cdot 2^d}\sum_{i=1}^d\partial_{i}^3f(x)\sigma_{i,i}^3(x) \nu_{i}^3q_i^{(1)}(x,0)(\nu)=\frac{1}{3} \sum_{i,k=1}^d\partial_{i}^3f(x)\sigma_{i,i}^3(x) \nu_{i}^3 \partial^{\xi}_kq_i(x,0) \nu_k,
\end{equation*}
and on taking the expectation over $\nu$ we get
\begin{equation*}
\frac{1}{3}\sum_{i,k=1}^d\partial_i^3f(x)\partial^{\xi}_kq_i(x,0) \sigma_{i,i}^3(x)  \mathbb{E}\left[ \nu_i^3 \nu_k \right],
\end{equation*}
and since $\mathbb{E}\left[ \nu_i^4 \right]=3$ and for $k \neq i, \mathbb{E}\left[ \nu_i^3 \nu_k \right]=0$, we get
\begin{equation}\label{delta.term:9}
\sum_{i=1}^d\partial_i^3f(x)\partial^{\xi}_iq_i(x,0) \sigma_{i,i}^3(x).
\end{equation}
When we consider the second case for (\ref{delta.term:6.5}), where $i$ is equal to one of the $\{ i_1,i_2,i_3 \}$ and the other two are also equal, due to the symmetry between $i_1,i_2,i_3$, when summing over $b \in \{ \pm 1 \}^d$ we get
\begin{align*}
&\frac{2^d\cdot 3}{ 3 \cdot 2^d}\sum_{i \neq k}\partial_{i}\partial_{k}^2f(x)\sigma_{i,i}(x) \nu_{i} \sigma_{k,k}^2(x) \nu_{k}^2q_i^{(1)}(x,0)(\nu)\\
&=\sum_{\substack{i,j,k \\ i \neq k}} \partial_{i}\partial_{k}^2f(x)\sigma_{i,i}(x) \nu_{i}\sigma_{k,k}^2(x) \nu_{k}^2\partial^{\xi}_jq_i(x,0) \nu_j.
\end{align*}
On taking expectations with respect to $\nu$ we get
\begin{align*}
&\sum_{\substack{i,j,k \\ i \neq k}} \partial_{i}\partial_{k}^2f(x)\partial^{\xi}_jq_i(x,0) \sigma_{i,i}(x)\sigma_{k,k}^2(x) \mathbb{E}\left[ \nu_i \nu_j \nu_k^2 \right].
\end{align*}
When $j \neq i$, $\mathbb{E}\left[ \nu_i \nu_j \nu_k^2 \right]=0$, whereas $\mathbb{E}\left[ \nu_i^2 \nu_k^2 \right]=1$ for $j = i$. Therefore the last expression is equal to
\begin{equation*}
\sum_{i \neq k} \partial_{i}\partial_{k}^2f(x)\partial^{\xi}_iq_i(x,0) \sigma_{i,i}(x)  \sigma_{k,k}^2(x).
\end{equation*}
Adding this to \eqref{delta.term:9}, we see that the sixth term in \eqref{delta.term:6} is 
\begin{equation*}
\sum_{i, k=1}^d \partial_{i}\partial_{k}^2f(x)\partial^{\xi}_iq_i(x,0) \sigma_{i,i}(x)   \sigma_{k,k}^2(x),
\end{equation*}
and using \eqref{condition.p.derivative:0} and the fact that $\sigma$ is diagonal, we get the fourth term of $\mathcal{A}_2$ as in \eqref{second.derivative.numerical.scheme:1}.
     
Finally, for the seventh term in \eqref{delta.term:6} we write
\begin{align*}
&\frac{1}{24 \cdot 2^d}f^{(4)}(x)(\sigma(x)(\nu\ast b))^4\\
&= \frac{1}{24 \cdot 2^d}\!\!\!\!\!\!\!\sum_{i_1,i_2,i_3,i_4=1}^d \!\!\!\!\!\!\!\partial_{i_1}\partial_{i_2}\partial_{i_3}\partial_{i_4}f(x) b_{i_1}\sigma_{i_1,i_1}(x)
\nu_{i_1}b_{i_2}\sigma_{i_2,i_2}(x)
\nu_{i_2 }b_{i_3}\sigma_{i_3,i_3}(x) \nu_{i_3} b_{i_4} \sigma_{i_4,i_4}(x) \nu_{i_4}.
\end{align*}
As in the previous case, when we sum over $b \in \{ \pm 1 \}^d$, we get a term of the form $\sum_{b \in \{ \pm 1 \}^d}b_{i_1}b_{i_2}b_{i_3}b_{i_4}$ which is non-zero (and equal to $2^d$) in only two cases: either $i_1=i_2=i_3=i_4$, or there are two pairs of equal $i_j$'s. In the first case we get
\begin{align*}
\frac{2^d}{24 \cdot 2^d}\sum_{i=1}^d \partial_{i}^4f(x)\sigma_{i,i}^4(x)
\nu_{i}^4,
\end{align*}
and when we take the expectation over $\nu$ we get 
\begin{align}\label{delta.term:10}
\frac{1}{8}\sum_{i=1}^d\partial_i^4f(x)  \sigma_{i,i}^4(x).
\end{align}
In the second case, due to symmetry between the $i_j$'s, and since there are 3 ways to pair up the 4 $i_j$'s, we get
\begin{align*}
\frac{2^d\cdot 3}{24 \cdot 2^d} \sum_{i \neq k}  \partial_i^2\partial_k^2f(x)\sigma_{i,i}^2(x) \nu_i^2 \sigma_{k,k}^2(x) \nu_k^2.
\end{align*}
Taking the expectation over $\nu$ we get
\begin{equation*}
\frac{1}{8} \sum_{i \neq k} \partial_i^2\partial_k^2f(x)  \sigma_{i,i}^2(x) 
\sigma_{k,k}^2(x).
\end{equation*}
Adding this to \eqref{delta.term:10}, we get that the seventh term in \eqref{delta.term:6} is
\begin{equation*}
\frac{1}{8} \sum_{i,k=1}^d \partial_i^2\partial_k^2f(x) \sigma_{i,i}^2(x) \sigma_{k,k}^2(x),
\end{equation*}
which is the last term of $\mathcal{A}_2$ as in \eqref{second.derivative.numerical.scheme:1}.
     
Combining all the seven cases above, we see that the coefficient of $\Delta t^2$ is $\mathcal{A}_2f(x)$ as in \eqref{second.derivative.numerical.scheme:1}.

Finally, we consider the coefficient of order $\Delta t^3$ and higher. We recall that from Assumptions \ref{ass.drift.general:1}, \ref{ass.volatility.general:1} and \ref{regularity.coefficients:1} we have polynomial bounds on all $|\mu_i|$, $\sigma_{i,i}(x)$ and all the derivatives of $q_i$ and $f$. Using also the fact that all the moments of $\sigma(x) \nu$ are finite, along with the fact the $\Delta t \in (0,1)$, we get that the higher order terms can be bounded above in absolute value by $K_2(x) \Delta t^3$, for some function $K_2 \in C^{\infty}_P(\mathbb{R}^d)$. 
\end{proof}

\section{Proof of Theorem \ref{Generalisation.theorem:1}}\label{proof.of.regularity:00}

The proof proceeds by estimating the moments associated to \eqref{eq:SDE_appendix} and its derivatives. 
We heavily rely on the theory of random diffeomorphisms induced by SDEs \citep{kunita2019stochastic}, which in general requires global Lipschitz conditions. We circumvent this problem by taming the drift outside of a ball of large enough radius (see \eqref{eq:tamed sde} below). We also note that, in order to shorten the notation, in the proof below the same letter $C$ will be used to denote potentially different upper bounding constants.

\begin{proof}[Proof of Theorem \ref{Generalisation.theorem:1}]
To ease notation, let us define $a(x) := \sigma(x) \sigma^\top(x)$, for $x \in \mathbb{R}^d$. By Lipschitz continuity of $\sigma$ (Assumption \ref{ass.volatility.general:1}), $a$ grows at most quadratically,
\begin{equation}
\label{eq:a bound}
\Vert a(x) \Vert_F \le C(1+ \Vert x \Vert^2), \qquad \qquad x \in \mathbb{R}^d,   
\end{equation}
for an appropriate constant $C>0$.
 For $p \ge 2$, we will make repeated use of the derivatives 
\begin{equation}
\label{eq:Frobenius d1}
\frac{\partial \Vert A \Vert_F^p}{\partial A_{i_1,\ldots,i_n}} = p \Vert A \Vert^{p-2}_F A_{i_1,\ldots,i_n}
\end{equation}
and
\begin{align}
\label{eq:Frobenius d2}
\frac{\partial^2 \Vert A \Vert_F^p }{\partial A_{i_1,\ldots,i_n} \partial A_{i'_1,\ldots,i'_n}} =  & p \Vert A \Vert_F^{p-2} \delta_{(i_1,\ldots,i_n),(i'_1,\ldots,i'_n)} \\ & + p (p-2) \Vert A \Vert_F^{p-4} A_{i_1,\ldots,i_n} A_{i_1',\ldots,i_n'}, 
\nonumber
\end{align}
where in \eqref{eq:Frobenius d2}, $\delta_{(i_1,\ldots,i_n),(i'_1,\ldots,i'_n)}$ is equal to one if all indices coincide ($i_1 = i'_1$, $i_2 = i'_2$, ..., $i_n = i'_n$), and zero otherwise. 

\noindent\emph{Estimates on $Y^x_t$.}
By It{\^o}'s formula and \eqref{eq:Frobenius d1}-\eqref{eq:Frobenius d2}, we have a.s.
\begin{align}\label{ito.no.derivative:1}
\Vert Y^x_t \Vert^p = & \Vert x \Vert^p + \int_0^t p \Vert Y^x_s \Vert^{p-2} Y^x_s  \, \mu\left( Y^x_s \right) ds + \int_0^t \Vert Y^x_s \Vert^{p-2} Y^x_s  \, \sigma\left( Y^x_s \right) dW_s \nonumber \\
 &+ \int_0^t \frac{1}{2} p \Vert Y^x_s \Vert^{p-2} \mathrm{Tr}\,a(Y^x_s) \, \mathrm{d}s 
 + \int_0^t \frac{1}{2}p (p-2) \Vert Y_s \Vert^{p-4} Y_s^\top a(Y^x_s) Y^x_s \, \mathrm{d}s,
\end{align}
for all $t \in [0,T]$. In particular, if we introduce for any $R \in \mathbb{N}$ the stopping time 
\begin{equation}\label{stopping.time.tau.R:0}
    \tau_R=\inf \left\{ t \geq 0 : \ \| Y^x_t \| \geq R \right\},
\end{equation}
 and denote $t \wedge \tau_R  = \min \{  t , \tau_R \}$ we get a.s. for all $R \in \mathbb{N}$ and $t \in [0,T]$,
\begin{align*}
&\Vert Y^x_{t \wedge \tau_R} \Vert^p =  \Vert x \Vert^p + \int_0^{t \wedge \tau_R} p \Vert Y^x_s \Vert^{p-2} Y^x_s  \, \mu\left( Y^x_s \right) ds + \int_0^{t \wedge \tau_R} \Vert Y^x_s \Vert^{p-2} Y^x_s  \, \sigma\left( Y^x_s \right) dW_s \\
 &+ \int_0^{t \wedge \tau_R} \frac{1}{2} p \Vert Y^x_s \Vert^{p-2} \mathrm{Tr}\,a(Y^x_s) \, \mathrm{d}s 
 + \int_0^{t \wedge \tau_R} \frac{1}{2}p (p-2) \Vert Y^x_s \Vert^{p-4} \left(Y_s^x\right)^{\top} a(Y^x_s) Y^x_s \, \mathrm{d}s. \nonumber
\end{align*}
We observe that $\left(\int_0^{t \wedge \tau_R} \Vert Y^x_s \Vert^{p-2} Y^x_s  \, \sigma\left( Y^x_s \right) dW_s \right)_{t \geq 0}$ is a true martingale, since the integrand is bounded, therefore it has zero expectation. Taking expectations, using the one-sided Lipschitz condition on $\mu$ (Assumption \ref{ass.drift.general:1}) as well as \eqref{eq:a bound}, and using Fubini's theorem to exchange integral and expectation we see that there exists a constant $C>0$ such that
\begin{equation*}
 \mathbb{E} \left[ \Vert Y^x_{t \wedge \tau_R} \Vert^p \right] \le \Vert x \Vert^p + C \int_0^{t} \mathbb{E}[\Vert Y^x_{s \wedge \tau_R} \Vert^{p-2}] \, \mathrm{d}s + C \int_0^{t} \mathbb{E}[\Vert Y^x_{s \wedge \tau_R} \Vert^p] \, \mathrm{d}s ,   
\end{equation*}
for all $t \in [0,T]$. In particular, 
\begin{align*}
    \mathbb{E} \left[ \Vert Y^x_{t \wedge \tau_R} \Vert^p \right] &\le  \Vert x \Vert^p + C \int_0^{t} \mathbb{E}[\Vert Y^x_{s \wedge \tau_R} \Vert^p] +1 \, \mathrm{d}s + C \int_0^{t} \mathbb{E}[\Vert Y^x_{s \wedge \tau_R} \Vert^p] \, \mathrm{d}s \\
    &\le  \Vert x \Vert^p + CT+ 2 C  \int_0^{t} \mathbb{E}[\Vert Y^x_{s \wedge \tau_R} \Vert^p] \, \mathrm{d}s. 
\end{align*}
Gronwall's inequality then implies
\begin{equation}
\label{eq:Y.R bound}
\mathbb{E}[\Vert Y^x_{t \wedge \tau_R} \Vert^p] \le C_p \left( \Vert x \Vert^p + 1 \right), \qquad x \in \mathbb{R}^d,
\end{equation}
for an appropriate constant $C_p>0$, independent of $t \in [0,T]$ and $R \in \mathbb{N}$. In particular, for any set $A \in \mathcal{B}(\mathbb{R}^d)$, using H{\"o}lder's inequality inequality for any $p'>p$, we have that
\begin{equation*}
\mathbb{E}[\Vert Y^x_{t \wedge \tau_R} \Vert^p 1_{A}] \leq \mathbb{E}[\Vert Y^x_{t \wedge \tau_R} \Vert^{p'}]^{1/p'} \mathbb{P}\left( A \right) ^{1-1/p'}  \leq C_{p'} \left( \Vert x \Vert^{p'} +1 \right) \mathbb{P}\left( A \right)^{1-1/p'}.
\end{equation*}
Consequently, the family of random variables $\left( \Vert Y^x_{t \wedge \tau_R} \Vert^p\right)_{R \in \mathbb{N}}$ is uniformly integrable in the sense of \cite[Section 4.5]{bogachev2007measure}. Since a.s. $\lim_{R \rightarrow \infty} Y^x_{t \wedge \tau_R}=Y^x_t$, using Vitali's theorem  \citep[Theorem 4.5.4]{bogachev2007measure} we get
\begin{equation*}
  \lim_{R \rightarrow \infty}  \mathbb{E}[\Vert Y^x_{t \wedge \tau_R} \Vert^p]  = \mathbb{E}[\Vert Y^x_{t} \Vert^p], 
\end{equation*}
and using (\ref{eq:Y.R bound}) we get
\begin{equation}
\label{eq:Y bound}
\mathbb{E}[\Vert Y^x_{t} \Vert^p] \le C_p \left( \Vert x \Vert^p + 1 \right), \qquad x \in \mathbb{R}^d.
\end{equation}
Together with $f \in C_P^l(\mathbb{R}^d)$, the bound \eqref{eq:moment bound u} follows. 

 \emph{Estimates on $\nabla_x Y^x_t$.}
 In order to deal with the non-Lipschitz character of the drift $\mu$, we first establish results on the tamed version \eqref{eq:tamed sde} below.
For any cut-off radius $r \in \mathbb{N}$, we fix a drift $\mu^{(r)} \in C_b^{\infty}(\mathbb{R}^d;\mathbb{R}^d)$ so that $\mu^{(r)}(x) = \mu(x)$ if $\Vert x\Vert \le r$, %$\mu^{(r)}$ is Lipscchitz
, and such that 
\begin{equation}
\label{eq:r contracting}
\xi^\top \nabla \mu^{(r)}(x) \xi \le C \Vert \xi \Vert^2, \qquad \text{for all}\, x, \xi \in \mathbb{R}^d, 
\end{equation}
with the same constant $C$ as in \eqref{eq:drift contracting}, independent of $r$ (the drifts $\mu^{(r)}$ can straightforwardly be obtained by modifying $\mu$ outside of $\{x \in \mathbb{R}^d : \Vert x \Vert \le r\}$, in such a way that \eqref{eq:drift contracting} is not violated). We consider the SDEs
\begin{equation}
\label{eq:tamed sde}
\mathrm{d}Y^{x,(r)}_t = \mu^{(r)}(Y^{x,(r)}_t) \, \mathrm{d}t + \sigma(Y^{x,(r)}_t) \, \mathrm{d}W_t, \qquad Y^{x,(r)}_0 = x.
\end{equation}
By \cite[Theorem 3.4.1]{kunita2019stochastic}, there exists a modification of  $(Y^{x,(r)}_t)_{t \in [0,T]}$ which is continuously differentiable with respect to the initial condition, for all $t \in [0,T]$. The derivative $\nabla_x Y_t^{x,(r)} \in \mathbb{R}^{d\times d}$ satisfies the SDEs %{\bf not sure what $\sigma_k$ is, same on following eq.}
%\begin{equation}
%\nonumber
%\mathrm{d} \nabla_x Y^{x,(r)}_t\! =\! \nabla_x Y^{x,(r)}_t \nabla \mu^{(r)}(Y^{(r)}_t) \, \mathrm{d}t + \nabla_x Y^{x,(r)}_t \nabla_x \sigma(Y^{x,(r)}_t) \, \mathrm{d}W_t, \quad \nabla_x Y^{x,(r)}_0 = I_{d}, 
%\end{equation}
%or in index form 
\begin{equation}
    \mathrm{d} \partial_j Y^{x,(r)}_{t, i} = \sum_{l=1}^d \partial_l \mu^{(r)}_{i}(Y^{x,(r)}_t) \partial_j Y^{(x,r)}_{t, l} \, \mathrm{d}t + \sum_{k=1}^d \sum_{l=1}^d \partial_l \sigma^{(r)}_{i,k}(Y^{x,(r)}_t) \partial_j Y^{x,(r)}_{t,l} \, \mathrm{d}W_t^k, %\qquad i,j = 1,\ldots,d.
\end{equation}
for all $i,j=1, \ldots , d$, with initial condition $
    \nabla_x Y^{x,(r)}_0 = I_{d}$.
%where we have temporarily dropped the $r$-superscipts on $\mu$ and $Y_t$ to simplify the notation. 
%In order to apply It{\^o}'s formula to the Frobenius norm $\Vert \nabla_x Y_t^{x,(r)} \Vert_F$, we compute the evolution of the  quadratic variation: for all $i,i',j,j'=1, \ldots , d$ we have
%\begin{equation}
%\mathrm{d} \langle \partial_{j} Y_{\cdot, i}^{x,(r)}, \partial_{j'} Y_{\cdot, i'}^{x,(r)} \rangle_t = \sum_{k=1}^d \sum_{l,l'=1}^d \partial_l \sigma_{i,k}(Y_t^{x,(r)}) \partial_{l'} \sigma_{i',k}(Y^{x,(r)}_t) \partial_j Y^{x,(r)}_{t,l} \partial_{j'}Y^{x,(r)}_{t,l'} \, \mathrm{d}t.
%\end{equation}
Based on \eqref{eq:Frobenius d1} and \eqref{eq:Frobenius d2}, using It{\^o}'s formula we get 
\begin{subequations}
\label{eq:ItoDY}
\begin{align}
\nonumber
 \mathrm{d} \Vert \nabla_x  Y^{x,(r)}_t \Vert_F^p&  = p \,  \Vert \nabla_x Y^{x,(r)}_t \Vert_F^{p-2} \sum_{i,j,l = 1}^d \partial_l \mu_i(Y^{x,(r)}_t)\partial_jY_t^l  \partial_j Y_t^i \ \mathrm{d}t
 \\
 \label{eq:DY2}
 &\hspace{-1.4cm}
  + \frac{p}{2}  \Vert \nabla_x Y^{x,(r)}_t \Vert_F^{p-2} \sum_{i,j,k=1}^d \left( \sum_{l=1}^d \partial_l\sigma_{i,k}(Y_t^{x,(r)}) \partial_jY_{t,l}^{x,(r)}\right)^2 \mathrm{d}t 
  %\sum_{i,j,l,l' = 1}^d \partial_l \sigma_i(Y^{x,(r)}_t) \partial_{l'} \sigma_i(Y_t) \partial_j Y_t^l \partial_j Y_t^{l'}\right] \mathrm{d}t
 \\
 \label{eq:DY3}
 &\hspace{-1.4cm} + \frac{p(p-2)}{2} \Vert \nabla_x Y^{x,(r)}_t \Vert_F^{p-4}\!\! \\
 &\hspace{-1.85cm}\sum_{i,i', j,j',l,l',k = 1}^d \partial_l\sigma_{i,k}(Y^{x,(r)}_t) \partial_{l'}\sigma_{i',k}(Y^{x,(r)}_t) \partial_j Y^{x,(r)}_{t,l} \partial_{j'}Y^{x,(r)}_{t,l'} \partial_j Y^{x,(r)}_{t,i} \partial_{j'} Y^{x,(r)}_{t,i'} \!\! \ \mathrm{d}t \nonumber \\
 & + p \,  \Vert \nabla_x Y^{x,(r)}_t \Vert_F^{p-2} \sum_{k=1}^d\sum_{i,j,l = 1}^d \partial_l \sigma_{i,k}(Y^{x,(r)}_t)\partial_jY_t^l  \partial_j Y_t^i \ \mathrm{d}W^k_t. \nonumber
 \end{align}
\end{subequations}
We observe that $\Vert \nabla_x Y_0^{x,(r)}\Vert_F^p = d^{p/2}$. Furthermore, for any $t \in [0,T]$, we have the bound
\begin{equation*}
\int_0^t %\mathbb{E} \left[
\Vert \nabla_x Y^{x,(r)}_s \Vert_F^{p-2} \sum_{i,j,l = 1}^d \partial_l \mu^i(Y^{x,(r)}_s)\partial_jY_s^l  \partial_j Y_s^i %\right]
\,\mathrm{d}s \le C \int_0^t %\mathbb{E} \left[
\Vert \nabla_x Y^{x,(r)}_s \Vert_F^{p} %\right] 
\,\mathrm{d}s,
\end{equation*}
using \eqref{eq:r contracting}, and where $C>0$ does not depend on $x$, $r$ or $t$. By the boundedness of $\nabla \sigma$ (Assumption \ref{ass:der assumption}), we have a similar bound on the terms in \eqref{eq:DY2} and \eqref{eq:DY3}. Taking expectations, using a similar argument as in the discussion below (\ref{ito.no.derivative:1}) (by introducing appropriate stopping times $\tau_R$) to argue that any integral term with respect to the Brownian motion will have zero expectation, and exchanging integral and expectation using Fubini, we obtain for all $R \in \mathbb{N}$
\begin{equation}
\label{eq:nabla Y bound}
\mathbb{E} \left[\Vert \nabla_x Y^{x,(r)}_{t \wedge \tau_R} \Vert_F^p\right]  \le d^{p/2} + C(T) \int_0^t \mathbb{E} \left[\Vert \nabla_x Y^{x,(r)}_{s \wedge \tau_R} \Vert_F^{p} \right] \mathrm{d}s,
\end{equation}
for all $t \in [0,T]$, where $C$ does not depend on $r, R \in \mathbb{N}$, $t \in [0,T]$ or $x \in \mathbb{R}^d$. By Gronwall's Lemma, we get that $\mathbb{E} \left[\Vert \nabla_x Y^{x,(r)}_{t \wedge \tau_R} \Vert_F^p\right]$ is bounded uniformly in $r , R \in \mathbb{N}$, $t \in [0,T]$ and $x \in \mathbb{R}^d$. Therefore, letting $R \rightarrow \infty$ and using Vitali's theorem, we conclude that  $\mathbb{E} \left[\Vert \nabla_x Y^{x,(r)}_t \Vert_F^p \right]$ is bounded uniformly in $r \in \mathbb{N}$, $t \in [0,T]$ and $x \in \mathbb{R}^d$.

\emph{Controlling the gradient of the cut-off semigroup.} Defining the cut-off version of $u_t$,
\begin{equation}
\label{eq:utr}
 u_t^{(r)}(x) := \mathbb{E} [f(Y_t^{x,(r)})], \qquad x \in \mathbb{R}^d,  
\end{equation}
we next argue that $u_t^{(r)}$ is differentiable, with 
\begin{equation}
\label{eq:nabla u}
\nabla_x u_t^{(r)}(x) = \mathbb{E}\left[\nabla_x Y_t^{x,(r)} \nabla f(Y^{x,(r)}_t) \right], 
\end{equation}
that is, we can apply the gradient operation to \eqref{eq:utr} and exchange expectation and derivatives. To that end, notice that $x \mapsto f(Y_t^{x,(r)})$ is differentiable almost surely, with derivative $\nabla_x Y_t^{x,(r)}\nabla f(Y_t^{x,(r)})$. For fixed $x \in \mathbb{R}^d$, choose a bounded and open neighbourhood $\mathcal{U}_x \subset \mathbb{R}^d$. 
 We observe that the argument establishing (\ref{eq:Y bound}) works for the process $Y^{y,(r)}$ for any $y \in U_x$ and $r>0$, and the moment bound of (\ref{eq:Y bound}) is independent of $r$. Combining this with the uniform bound on $\Vert \nabla _y Y_t^{y,(r)} \Vert_F^p$ from above, as well as the fact that $f \in C_P^l(\mathbb{R}^d)$, we see that 
\begin{subequations}
\begin{align}\label{bound.the.first.u.derivavative:12}
\mathbb{E} \left[ \Vert\nabla_y Y_t^{y,(r)} \nabla f(Y_t^{y,(r)})\Vert^2\right] & \le \left(\mathbb{E} [\Vert \nabla_y Y_t^{y,(r)} \Vert_F^4]\right)^{1/2} \left( \mathbb{E} [\Vert \nabla f(Y_t^{y,(r)})\Vert^4]  \right)^{1/2} \nonumber
\\
& \le C(1 + \Vert y \Vert^m), 
\end{align}
\end{subequations}
for appropriate constants $C>0$ and $m \in \mathbb{N}$, independent of $r$. Consequently the family of random variables $(\nabla_y Y_t^{(r)}\nabla f(Y_t^{(r)}))_{y \in \mathcal{U}_x}$ is uniformly integrable. %in the sense of \cite[Section 4.5]{bogachev2007measure}.
Therefore, combining Vitali's theorem with the proof technique from \cite[Corollary 2.8.7(ii)]{bogachev2007measure}, we can exchange differentiation and integration:
Indeed, since $x \mapsto f(Y_t^{x,(r)})$ is continuously differentiable \citep[Theorem 3.4.1]{kunita2019stochastic}, the mean-value theorem allows us to choose (random) $\xi_{h, v} \in \mathbb{R}^d$ so that
\begin{equation}
\tfrac{1}{h}\left(f(Y_t^{x + hv,(r)}) - f(Y_t^{x,(r)})\right) = v \cdot \nabla_x Y_t^{\xi_{h,v},(r)} \nabla f(Y_t^{\xi_{h,v},(r)}),
\end{equation}
for all $h>0$ and $v \in \mathbb{R}^d$ such that $x + hv \in \mathcal{U}_x$.  According to Vitali's theorem, we can then pass this expression to the limit and exchange differentiation and integration. This proves (\ref{eq:nabla u}). We also observe that a similar use of Vitali's theorem shows that $\nabla_x u_t^{(r)} \in C(U;\mathbb{R}^d)$.

\emph{Letting $r \rightarrow \infty$.} To remove the cut-off, we show that for any open and bounded set $\mathcal{U} \subset \mathbb{R}^d$ and fixed $t \in [0,T]$, the sequence $(u_t^{(r)})_{r \in \mathbb{N}}$ converges uniformly to $u_t$ on $\mathcal{U}$, and likewise $(\nabla_x u^{(r)}_t)_{r \in \mathbb{N}}$ is a   Cauchy sequence in $C(\mathcal{U};\mathbb{R}^d)$. Together, these claims show that $u_t$ is differentiable on $\mathcal{U}$, and the derivative coincides with the limit of $(\nabla_x u^{(r)}_t)_{r \in \mathbb{N}}$. Furthermore, since the upper bound on (\ref{bound.the.first.u.derivavative:12}) does not depend on $r$, we obtain the bound \eqref{eq:der u bound} for the limit $\nabla u_t$.

To show that $u_t^{(r)} \rightarrow u_t$, uniformly on $\mathcal{U}$, notice that
\begin{equation}
\sup_{x \in \mathcal{U}} |u_t^{(r)}(x) - u_t(x) | = \sup_{x \in \mathcal{U}} \left| \mathbb{E} \left[ \left(f(Y_t^{x,(r)}) - f(Y^{x}_t) \right) \mathbb{I}\left(\sup_{0 \le s \le t} \Vert Y^x_t \Vert \ge r\right)\right] \right|, 
\end{equation}
based on the fact that $Y_t^{(r)}$ and ${Y_t}$ can be coupled to coincide almost surely on ${\{\sup_{0 \le s \le t}\Vert Y^x_s\Vert \le r \}}$. Using Markov's inequality, the fact that $f \in C_P^l(\mathbb{R}^d)$ and the bound \eqref{eq:Y bound}, we see that up to a multiplicative constant, this expression can be bounded by the supremum over $x \in U$ of the quantity
\begin{equation}
\frac{1}{r} \mathbb{E} \left[ \sup_{0 \le s \le t} \Vert Y^x_s \Vert \right] \le \frac{1}{r} \mathbb{E} \left[ \Vert x \Vert + \int_0^t \Vert \mu(Y_s^x) \Vert \mathrm{d}s + \sup_{0 \le s \le t}\left\Vert \int_0^s \sigma(Y_{s'}^x) \, \mathrm{d}W_{s'}\right \Vert \right]. 
\end{equation}
According to the Burkholder-Davis-Gundy (and using the polynomial growth assumption on $\mu$ and $\sigma$), this expression indeed converges to zero as $r \rightarrow \infty$, uniformly in $x \in \mathcal{U}$.

 To show that $(\nabla_x u^{(r)}_t)_{r \in \mathbb{N}}$ is a   Cauchy sequence in $C(\mathcal{U};\mathbb{R}^d)$, %we show that for any fixed $\widetilde{r} \in \mathbb{R}$
%\begin{equation}
%\sup_{x \in \mathcal{U}}\left \Vert \nabla_x u^{(r)}_t(x) - \nabla_x u^{(r + \widetilde{r})}_t(x) \right \Vert \xrightarrow{r \rightarrow \infty} 0.
%\end{equation}
we observe that for any $r,\widetilde{r}>0$
\begin{subequations}
\begin{align*}
& \left \Vert \nabla_x u^{(r)}_t(x) - \nabla_x u^{(r + \widetilde{r})}_t(x) \right \Vert = \left\Vert \mathbb{E} \left[\nabla_xY_t^{x,(r)} \nabla f(Y_t^{x,(r)}) - \nabla_xY_t^{x,(r+ \widetilde{r})} \nabla f(Y_t^{x,(r + \widetilde{r})})  \right] \right\Vert 
\\
& =\left\Vert \mathbb{E} \left[\left(\nabla_xY_t^{x,(r)} \nabla f(Y_t^{x,(r)}) - \nabla_xY_t^{x,(r+ \widetilde{r})} \nabla f(Y_t^{x,(r + \widetilde{r})})  \right) \mathbb{I}\left({\sup_{0 \le s \le t}\Vert Y_t^{x,(r)}\Vert  \ge r}\right)\right]\right\Vert,
\end{align*}
\end{subequations}
Proceeding as above, and on letting $r \rightarrow \infty$, we get that $(\nabla_x u^{(r)}_t)_{r \in \mathbb{N}}$ is a   Cauchy sequence in $C(\mathcal{U};\mathbb{R}^d)$.

\emph{Estimates on $\nabla^n Y^x_t$.} To establish bounds on the higher-order derivatives, we proceed as before and consider the tamed SDE \eqref{eq:tamed sde}. Exchanging derivatives and expectations as well as removal of the cut-off $r \in \mathbb{N}$ works in the same way as for the first-order case. We therefore omit these details and lay out the main steps to establish bounds on moments of $\nabla_x^n Y_t^x$.

Before we write the SDE that is satisfied by $\nabla_x^nY_t^{x,(r)}$ for $n \geq 2$, we need to introduce the following notation:
For $k \in \mathbb{N}$ and for the vectors $\zeta^1,...,\zeta^k \in \mathbb{R}^d$, we write
\begin{equation*}
    \nabla^k \mu_i(x)\left[ \zeta^1,...,\zeta^k \right] := \sum_{j_1,\dots,j_k=1}^d\partial_{j_1} \partial_{j_2} \dots \partial_{j_k} \mu_i(x) \zeta^1_{j_1}\zeta^2_{j_2} \dots \zeta^k_{j_k}.
\end{equation*}
Fix a set of indices $\left\{ i_1,..., i_n \right\} \subset \left\{ 1,...,d \right\}$. For any set $B= \left\{ j_1,..., j_m \right\} \subset \left\{ i_1,..., i_n \right\} $ we write $\nabla^{B}Y_t^{x,(r)}$ to denote the $d$-dimensional vector with $i$-coordinate given by 
\begin{equation*}
    \nabla^{B}Y_{t,i}^{x,(r)}=\partial_{j_1} \dots \partial_{j_m} Y_{t,i}^{x,(r)}.
\end{equation*}
Now, let $\mathcal{A}^{k}_{i_1,\dots , i_n}$ be the set of all possible partitions of set $\{ i_1, \dots , i_n \}$ that are composed of $k$ subsets of  $\{ i_1, \dots , i_n \}$. For any such partition $A=\left\{ A_1, \dots, A_k \right\} \in \mathcal{A}^{k}_{i_1,\dots , i_n}$ we write
\begin{equation*}
    \nabla^k \mu_i(x)\left[ Y_t^{x,(r)}, A \right]:= \nabla^k \mu_i(x)\left[ \nabla^{A_1}Y_t^{x,(r)},\dots, \nabla^{A_k}Y_t^{x,(r)} \right].
\end{equation*}

Then, using an induction argument for $n \in \mathbb{N}$, along with \cite[Theorem 3.4.1]{kunita2019stochastic} and straightforward calculations, one can conclude the following. For $n \geq 2$, the SDE that $\nabla_x^nY_t^{x,(r)}$ satisfies is such that for any $i,i_1,\dots,i_n \in \left\{ 1,
\dots,d \right\}$, the drift of $\partial_{i_1} \dots \partial_{i_n} Y_{t,i}^{x,(r)}$ is given by
\begin{equation}\label{any.derivative.drift.sde:1}
    \nabla^1 \mu_{i}(Y_t^{x,(r)}) \left[ \nabla^{i_1,...,i_n}Y_t^{x,(r)} \right] + \sum_{k=2}^n \sum_{A \subset \mathcal{A}^{k}_{i_1,\dots , i_n}}\!\!\!\nabla^k \mu_i(Y_t^{x,(r)})\left[ Y_t^{x,(r)} , A \right] \mathrm{d}t,
\end{equation}
and the volatility is given by 
\begin{equation}\label{any.derivative.volatility.sde:1}
   \sum_{j=1}^d  \nabla^1 \sigma_{i,j}(Y_t^{x,(r)}) \left[ \nabla^{i_1,...,i_n}Y_t^{x,(r)} \right] + \sum_{k=2}^n \sum_{A \subset \mathcal{A}^{k}_{i_1,\dots , i_n}}\!\!\nabla^k \sigma_{i,j}(Y_t^{x,(r)})\left[ Y_t^{x,(r)} , A \right]  \mathrm{d}W^j_t.
\end{equation}
with initial conditions 
$\nabla^n_x Y^{x,(r)}_0  = 0$. We observe that the only terms that involve $n$'th order derivatives of $Y_t^{x,(r)}$ appear on the first term of (\ref{any.derivative.drift.sde:1}) and (\ref{any.derivative.volatility.sde:1}), with coefficients $\nabla \mu_i$ and $\nabla \sigma_{i,j}$ respectively. All the other terms involve lower order derivatives.

In complete analogy with \eqref{eq:ItoDY}, we apply It{\^o}'s formula to obtain an evolution equation for $\mathbb{E} [\Vert \nabla^n_x Y^{x,(r)}_t \Vert^p]$. The only terms in the It{\^o} expansion that will involve the quantity $\Vert \nabla^n_x Y^{x,(r)}_t \Vert^p$ come from the first terms of %pertaining to the contributions in
(\ref{any.derivative.drift.sde:1}) and (\ref{any.derivative.volatility.sde:1}), and these can be bounded in the same way as for $n=1$ (see the inequality below \eqref{eq:DY3}), making essential use of \eqref{eq:drift contracting} and the boundedness of $\nabla \sigma$. All the other terms in the  It{\^o} expansion, correspond to products between $\Vert \nabla^n_x Y^{x,(r)}_t \Vert^q$ for various $q < p$ and lower order derivatives of $Y^{x,(r)}_t$. The expectations of these contributions %corresponding to the lower-order terms in \eqref{eq:DnY2}
can be bounded inductively: First we use H{\"o}lder's inequality to bound the expectation of such product by the product between the expectation of $\Vert \nabla^n_x Y^{x,(r)}_t \Vert^p$ (raised to the power $\frac{q}{p}<1$) and a moment of lower order derivatives. These moments have been shown to be bounded uniformly in $r$ and $t$ in the previous steps, where we consider lower order derivatives. Finally, any moment of $\nabla^k\sigma(Y_t^{x,(r)})$ or $\nabla^k\mu(Y_t^{x,(r)})$ for any $k=0,...,n$ can be bounded using boundedness of derivatives of $\sigma$ and the polynomial bounds on derivatives of $\mu$, in conjuction with the the bound on the the moments of $Y_t^{x}$ established in (\ref{eq:Y bound}). This shows the uniform bound (over $t,r$) of all moments of $n$'th order derivatives, given the bounds for order $k=0,...,n-1$.
\end{proof}

\section{Proof of Theorem \ref{theorem.weak.convergence:0}}\label{proof.weak.convergence:000}

We start with the following observation.

\begin{lemma}\label{a.small.lemma:1}
 Let $(u_n)_{n \in \mathbb{N}}$ be a sequence of non-negative real numbers. Assume that there exist $a>1$, $b>0$ such that for all $n \geq 1$
 \begin{equation}\label{a.small.lemma.assumption:1}
     u_n \leq a \ u_{n-1} + b.
 \end{equation}
 Then, for all $n \in \mathbb{N}$,
 \begin{equation}\label{a.small.lemma.result:1}
     u_n \leq a^{n} u_0 + b \frac{a^n-1}{a-1}.
 \end{equation}
\end{lemma}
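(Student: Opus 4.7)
The plan is to prove this by straightforward induction on $n \in \mathbb{N}$, since it is the standard discrete analogue of Gr\"onwall's inequality.

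First, I would verify the base case $n = 0$: the right-hand side of \eqref{a.small.lemma.result:1} reduces to $a^0 u_0 + b \cdot \frac{a^0 - 1}{a - 1} = u_0$, which matches the (trivial) left-hand side.

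Next, for the inductive step, I would assume the bound \eqref{a.small.lemma.result:1} holds at step $n-1$, apply the recursive hypothesis \eqref{a.small.lemma.assumption:1}, and simplify. Explicitly,
\begin{equation*}
u_n \leq a \, u_{n-1} + b \leq a \left( a^{n-1} u_0 + b \frac{a^{n-1} - 1}{a-1} \right) + b = a^n u_0 + b \, \frac{a(a^{n-1} - 1) + (a-1)}{a-1},
\end{equation*}
and the numerator simplifies as $a^n - a + a - 1 = a^n - 1$, yielding the desired inequality at step $n$.

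There is no real obstacle here; the only minor point worth noting is that the assumption $a > 1$ ensures $a - 1 > 0$ so that the geometric series expression $\frac{a^n - 1}{a - 1}$ is well-defined and non-negative (and equal to $\sum_{k=0}^{n-1} a^k$, which is the natural way to motivate the formula by iterating \eqref{a.small.lemma.assumption:1} directly, if one prefers a non-inductive presentation).
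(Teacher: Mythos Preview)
Your proposal is correct and follows essentially the same induction argument as the paper's proof, with the same algebraic simplification in the inductive step. The only cosmetic difference is that the paper also verifies the case $n=1$ separately (which is redundant given the $n=0$ base case and the inductive step).
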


\begin{proof}[Proof of Lemma \ref{a.small.lemma:1}.]
   The proof proceeds by induction. For $n=0$ the result is trivial since $a>1,b>0$. For $n=1$, the result follows from (\ref{a.small.lemma.assumption:1}). Assume that for some $n \geq 1$, (\ref{a.small.lemma.result:1}) holds. Using (\ref{a.small.lemma.assumption:1})for $n+1$, we directly calculate,
   \begin{align*}
       u_{n+1} &\leq a \ u_n +b \leq a \left( a^{n} u_0 + b \frac{a^n-1}{a-1} \right)+ b = a^{n+1} u_0 + a b \frac{a^n-1}{a-1}+b \\ & = a^{n+1} u_0 + b \left( \frac{a^{n+1}-a}{a-1} + 1 \right) =  a^{n+1} u_0 + b \frac{a^{n+1}-1}{a-1},  
   \end{align*}
   which proves (\ref{a.small.lemma.result:1}) for $n+1$ and completes the induction.
\end{proof}

In order to prove Theorem \ref{theorem.weak.convergence:0} we also need the following intermediate result.

\begin{lemma}\label{weak.conv.third.point.proof:1}
    Assume that Assumptions  \ref{ass.drift.general:1}, \ref{ass.volatility.general:1}, \ref{assumption.p.derivative:001} and \ref{regularity.coefficients:1} hold. Fix a time horizon $T > 0$. For all $x \in \mathbb{R}^d$ and $m \in \mathbb{N}$, there exists $C=C(T,m,x)$ such that for all $\Delta t \in (0,1)$ and $k \leq N=\left \lfloor{\frac{T}{\Delta t}}\right \rfloor$, 
   \begin{equation}\label{moments.control:1}
       \mathbb{E}_x\left[  \| X^k \|^m\right] \leq C.
   \end{equation}
\end{lemma}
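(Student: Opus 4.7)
The plan is as follows. Without loss of generality we take $m=2p$ even (the odd case follows from $\|X_k\|^m \le 1 + \|X_k\|^{m+1}$ together with the result for $m+1$). Introduce the Lyapunov function $V_p(x) := 1 + \|x\|^{2p}$ and note that $\|x\|^m \le V_p(x)$. The strategy is to establish a one-step geometric drift condition of the form
\begin{equation*}
\mathbb{E}\left[V_p(X_{n+1}) \mid X_n\right] \le (1 + C_p \Delta t)\, V_p(X_n) + C'_p \Delta t, \qquad n \le N,
\end{equation*}
with constants $C_p, C'_p$ independent of $\Delta t \in (0,1)$ and $n$, and then invoke Lemma \ref{a.small.lemma:1} to obtain $\mathbb{E}_x[V_p(X_k)] \le e^{C_p T} V_p(x) + (C'_p / C_p)(e^{C_p T} - 1)$.

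To obtain the one-step bound, I would expand $\|X_n + h\|^{2p}$ via Taylor (around $h = 0$) with $h := X_{n+1} - X_n = \sqrt{\Delta t}\, b_n * (\sigma(X_n) \nu_n)$, writing
\begin{equation*}
\|X_n + h\|^{2p} - \|X_n\|^{2p} = 2p\,\|X_n\|^{2p-2} \langle X_n, h \rangle + R_p(X_n, h),
\end{equation*}
with a remainder satisfying $|R_p(y,h)| \le C_p(\|y\|^{2p-2} + \|h\|^{2p-2})\|h\|^2$. Two ingredients then control the conditional expectation. First, from Assumption \ref{ass.volatility.general:1} (Lipschitz and diagonal $\sigma$), the coordinate identity $h_i = \sqrt{\Delta t}\, b_n^i \sigma_{i,i}(X_n) \nu_n^i$ (in which $b_n^i \in \{\pm 1\}$ disappears upon squaring), combined with finite Gaussian moments of $\nu_n$, gives $\mathbb{E}[\|h\|^{2j} \mid X_n] \le C_j \Delta t^j (1 + \|X_n\|^{2j})$ for every $j \ge 1$. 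Second, Proposition \ref{semigroup.expansion:1} applied coordinate-wise (or the Taylor expansion underlying its proof) yields $\mathbb{E}[h \mid X_n] = \Delta t\, \mu(X_n) + \Delta t^{3/2}\, r(X_n, \Delta t)$ with $\|r(X_n, \Delta t)\| \le C(1 + \|X_n\|^{m_r})$ from Assumptions \ref{assumption.p.derivative:001} and \ref{regularity.coefficients:1}. The one-sided Lipschitz condition on $\mu$ (Assumption \ref{ass.drift.general:1}), applied with $y=0$, gives the crucial inequality $\langle x, \mu(x) \rangle \le C(1 + \|x\|^2)$, so that
\begin{equation*}
2p\,\|X_n\|^{2p-2}\, \mathbb{E}[\langle X_n, h\rangle \mid X_n] \le C_p \Delta t \left(1 + \|X_n\|^{2p}\right) + C_p \Delta t^{3/2} \|X_n\|^{2p-1}(1 + \|X_n\|^{m_r}).
\end{equation*}
Combining the estimate on $\mathbb{E}[R_p(X_n, h)\mid X_n]$ (which is $O(\Delta t\,(1 + \|X_n\|^{2p}))$ after using the moment bounds on $h$) with the cross-term estimate and absorbing the $\Delta t^{3/2}$ remainder using $\Delta t \le 1$ and Young's inequality to swallow the higher-degree polynomial terms into $\Delta t \cdot V_p(X_n)$, we arrive at the desired drift condition.

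The main obstacle is the remainder-control step: naively, the correction in $\mathbb{E}[h \mid X_n]$ and the higher-order Taylor terms contribute polynomial factors of degree strictly larger than $2p$, so one cannot simply close the recursion in terms of $V_p$ alone. The resolution exploits the fact that these higher-degree contributions come multiplied by at least an extra factor of $\Delta t^{1/2}$, so Young's inequality $a \cdot b \le \tfrac{\varepsilon}{2} a^2 + \tfrac{1}{2\varepsilon} b^2$ (with appropriately chosen exponents) splits such a product into a small constant multiple of $V_p(X_n)$ and a term absorbed into the deterministic constant $C'_p \Delta t$. This closes the recursion in $V_p$ and, after applying Lemma \ref{a.small.lemma:1} with $a = 1 + C_p \Delta t$ and $b = C'_p \Delta t$, one recovers $\sup_{k \le N}\mathbb{E}_x[\|X_k\|^m] \le C(T, m, x)$ since $(1 + C_p \Delta t)^k \le e^{C_p T}$ for $k \le N$.
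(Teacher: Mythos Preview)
Your overall strategy—establish a one-step drift inequality and iterate via the discrete Gronwall Lemma \ref{a.small.lemma:1}—matches the paper's. The paper also reduces to even exponents and concludes with $(1+c\Delta t)^N \le e^{cT}$. The key difference lies in how the one-step bound is obtained, and here your argument has a genuine gap.

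The paper exploits the specific structure of the skew-symmetric increment: since $b_i^2 = 1$, the random variable $\|X_1\|^{2m}$ expands exactly as a finite sum $\sum_{k=0}^{2m} \Delta t^{k/2} P_k(x,b,\nu)$ in which every coefficient satisfies the pointwise bound $|P_k| \le C_k(1+\|x\|^{2m})G_k(\nu)$, obtained from $|b_i| \le 1$ and the linear growth of $\sigma$ alone. Crucially, neither the drift $\mu$ nor the derivatives of the $q_i$ enter these bounds; the magnitude of the increment is governed solely by $\sigma$. The paper then argues, via Proposition \ref{semigroup.expansion:1}, that the $\sqrt{\Delta t}$-coefficient is zero, so all remaining contributions are $O(\Delta t)(1+\|x\|^{2m})$.

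Your route—writing $\mathbb{E}[h\mid X_n] = \Delta t\,\mu(X_n) + \Delta t^{3/2} r(X_n,\Delta t)$ and invoking the one-sided Lipschitz condition on $\mu$—introduces $\mu$ and $r$ explicitly, and with them polynomial factors of degree strictly larger than $2p$. The Young's-inequality step you propose cannot repair this: Young's inequality redistributes exponents among factors but cannot reduce the total polynomial degree. A term like $\Delta t^{3/2}\|X_n\|^{2p-1+m_r}$ with $m_r > 1$ cannot be bounded by $\varepsilon\Delta t\,V_p(X_n) + C_\varepsilon\Delta t$ uniformly in $X_n$; the extra half-power of $\Delta t$ buys you nothing in the $x$-degree. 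The same issue afflicts the remainder $R_p(X_n,h)$ once you allow contributions from the $\Delta t^{3/2}$-expansion of $\mathbb{E}[h]$ interacting with higher derivatives. This is a genuine gap, not a technicality.

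The remedy is precisely the paper's observation: do not expand $\mathbb{E}[h]$ through $\mu$ at all. Instead bound the coefficients in the $\sqrt{\Delta t}$-expansion of $\|X_1\|^{2m}$ directly, using only $|b_i|\le 1$ and the linear growth of $\sigma$, which automatically keeps every coefficient at degree $\le 2m$ in $x$. The one-sided Lipschitz condition on $\mu$ is in fact not invoked in the paper's proof of this lemma.
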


\begin{proof}[Proof of Lemma \ref{weak.conv.third.point.proof:1}]
First of all, we observe that it suffices to prove (\ref{moments.control:1}) for all $m \in \mathbb{N}$ even. The result generalises to $m$ odd, using, for example, H\"older's inequality.

Recall that $X^1=x+\Delta t^{1/2} \sigma(x) \left(b \ast \nu \right)$, for $b \in \{ \pm 1  \}^d$ and $\nu \sim N(0,I_d)$. Therefore, for any $m \in \mathbb{N}$,
\begin{align*}
    &\| X^1 \|^{2m}= \left( \left(x_1+\Delta t^{1/2} (\sigma(x) \left(b \ast \nu \right))_1 \right)^2 +  ... + \left(x_d+\Delta t^{1/2} (\sigma(x) \left(b \ast \nu \right))_d \right)^2 \right)^m \\
    &=\left(  \| x \|^2 + \Delta t^{1/2} 2 \sum_{i,j=1}^d x_i\sigma_{i,j}(x)b_j\nu_j +\Delta t \| \sigma(x) (b \ast \nu) \|^2 \right)^m,
\end{align*}
    which is clearly a polynomial with respect to $\Delta t^{1/2}$. Since $\nu \sim N(0,I_d)$ has moments of all orders, by taking expectations of the above expression we see that $\mathbb{E}_x\left[ \| X^1\|^{2m} \right]$ is also a polynomial with respect to $\Delta t^{1/2}$. The constant term of the polynomial is equal to $\| x \|^{2m}$. Furthermore, using the test function $f(x)=\| x \|^{2m}$ in Proposition \ref{semigroup.expansion:1} and letting $\Delta t \rightarrow 0$ shows that the first order term of the polynomial (i.e. the coefficient of $\Delta t^{1/2}$) is zero. By Assumption \ref{ass.volatility.general:1}, there exists $c_1>0$ such that for all $i,j \in \{ 1,...,d \}$ we have that $ \sigma_{i,j}(x)  \leq c_1 (1+ \| x \|)$. This implies that all the other coefficients of the polynomial are of order $O(1+ \| x \|^{2m})$ with respect to $x$. Overall this means that there exists a constant $c=c(m)$ such that for all $x \in \mathbb{R}^d$,
\begin{equation*}
    \mathbb{E}_x\left[ \| X^1 \|^{2m} \right] \leq \|  x \|^{2m} + \Delta t \ c \|  x \|^{2m} + \Delta t \ c = \| x \|^{2m} \left( 1+ c \ \Delta t  \right) + c \ \Delta t.
\end{equation*}
Therefore, using the Markov property, for any $k=0,..., N-1$,
\begin{equation*}
    \mathbb{E}_x\left[ \| X^{k+1} \|^{2m} \right] \leq \mathbb{E}_x \left[ \|  X^{k} \|^{2m} \right] \left( 1+ c \Delta t  \right) + c \ \Delta t.
\end{equation*}
Using Lemma \ref{a.small.lemma:1}, we get that for all $k=0,...,N$ and $x \in \mathbb{R}^d$,
\begin{align*}
    \mathbb{E}_x\left[ \| X^{k} \|^{2m} \right] &\leq \| x \|^{2m} \left( 1+ c \ \Delta t  \right)^{k} + c \ \Delta t \frac{(1+ c \ \Delta t)^k-1}{1+ c \ \Delta t -1} \\
    &\leq \| x \|^{2m} \left( 1+ c \Delta t  \right)^{N}+ \left( 1+ c \Delta t  \right)^{N}.
\end{align*}
Recall that $N=\left \lfloor{\frac{T}{\Delta t}}\right \rfloor \leq \frac{T}{\Delta t}$, so $\Delta t \leq \frac{T}{N}$. This implies that
\begin{equation*}
    \mathbb{E}_x\left[ \| X^{k} \|^{2m} \right] \leq \| x \|^{2m} \left( 1+  \frac{cT}{N}  \right)^{N} + \left( 1+  \frac{cT}{N}  \right)^{N}  \leq \left( \| x \|^{2m} +1 \right) \exp\{  cT\},
\end{equation*}
which completes the proof.
\end{proof}

\begin{proof}[Proof of Theorem \ref{theorem.weak.convergence:0}]
From Lemma \ref{weak.conv.third.point.proof:1}, Assumption d) of the general convergence theorem of Milstein \& Tretyakov \cite[Theorem 2.2.1]{milstein.tretyakov:21} holds. From Proposition \ref{semigroup.expansion:1}.1, equation (2.2.7) of \cite[Theorem 2.2.1]{milstein.tretyakov:21} holds with $p=1$, while from Theorem \ref{Generalisation.theorem:1}, (2.2.7) holds for any test function $u$ of the form (\ref{def.u:01}). The rest of the proof of \cite[Theorem 2.2.1]{milstein.tretyakov:21} applies and the result follows.
\end{proof}

\section{Proof of Theorem \ref{thm:geometric_ergodicity}}\label{proof.geom.ergo:000}

We begin with the following proposition.

\begin{proposition} \label{prop:small_set}
Under Assumption \ref{ass:long_time} the Markov chain with transition $P$ associated with the skew-symmetric numerical scheme is $\lambda_\text{Leb}$-irreducible and aperiodic, and all compact sets are small. In addition the following holds for fixed $s>0$ and setting $V(x) := e^{s\|x\|_{\infty}}$: For any compact set $C \subset \mathbb{R}^d$ 
there exists a constant $K_{s,C} >0$ such that
\begin{equation} \label{eq:lyapunov_compact}    \int V(y) P(x,\mathrm{d}y) < K_{s,C},
\end{equation}
for all $x \in C$.
\end{proposition}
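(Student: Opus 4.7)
My plan is to build everything from an explicit expression for the transition density, exploiting that Assumption \ref{ass:long_time}.3 decouples the coordinates. Writing $\eta_i = b_i \nu_i$, a coordinate-wise change of variables and a summation over $b \in \{\pm 1\}^d$ yields that, conditional on $X_n = x$, the law of $X_{n+1}$ admits the Lebesgue density
\begin{equation*}
p(x,y) = \prod_{i=1}^d \frac{1}{\sqrt{\Delta t}\,\sigma_{i,i}(x)} \bigl[g_i(x,\eta_i(y)) + 1 - g_i(x,-\eta_i(y))\bigr]\phi(\eta_i(y)),
\end{equation*}
where $\eta_i(y) = (y_i - x_i)/(\sqrt{\Delta t}\,\sigma_{i,i}(x))$ and $\phi$ is the standard Gaussian density. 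Since $g_i \in (0,1)$ and $\phi > 0$, we get $p(x,y) > 0$ for every $x,y \in \mathbb{R}^d$.

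From this I would read off the three structural properties. First, positivity of $p$ immediately gives $P(x,A) > 0$ for every $x \in \mathbb{R}^d$ and every Borel set $A$ with $\lambda_{\text{Leb}}(A) > 0$, so $P$ is $\lambda_{\text{Leb}}$-irreducible. Second, fix any compact $C \subset \mathbb{R}^d$ and choose a bounded closed ball $B$. On the compact rectangle $C \times B$, the values $\sigma_{i,i}(x)$ lie in $[N,M]$ by Assumption \ref{ass:long_time}.1, the arguments $\eta_i(y)$ remain bounded, $g_i$ is bounded away from zero on compact sets by Assumption \ref{ass:long_time}.3, and $\phi$ is bounded below on bounded intervals. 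Hence there exists $\varepsilon = \varepsilon(C,B) > 0$ with $p(x,y) \geq \varepsilon$ for all $(x,y) \in C \times B$, which gives the small-set inequality $P(x,\cdot) \geq \varepsilon\lambda_{\text{Leb}}(\cdot \cap B)$ for $x \in C$. Choosing $C = B$ in this construction produces a small set on which the minorizing measure charges the same set, giving strong aperiodicity and hence aperiodicity in the usual sense.

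For the Lyapunov-type estimate \eqref{eq:lyapunov_compact} I would not use the density at all but argue directly from the update rule. Since $|X_1^i| \leq |x_i| + \sqrt{\Delta t}\,M |\nu_i|$ coordinate-wise (using $\sigma_{i,i} \leq M$ from Assumption \ref{ass:long_time}.1 and $|b_i| = 1$), taking maxima gives
\begin{equation*}
\|X_1\|_\infty \leq \|x\|_\infty + \sqrt{\Delta t}\,M \max_{1 \le i \le d}|\nu_i|,
\end{equation*}
so that $\mathbb{E}_x[V(X_1)] \leq e^{s\|x\|_\infty}\,\mathbb{E}\bigl[e^{s\sqrt{\Delta t}\,M \max_i |\nu_i|}\bigr]$. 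The remaining expectation is bounded by $\sum_{i=1}^d \mathbb{E}[e^{s\sqrt{\Delta t}\,M|\nu_i|}]$, which is a finite constant depending only on $s,\Delta t,M,d$. Taking $x \in C$ and using that $\|x\|_\infty$ is bounded on $C$ yields the claimed uniform bound $K_{s,C}$.

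None of the steps is deep; the only thing requiring care is the coordinate-by-coordinate change of variables together with the sum over $b \in \{\pm 1\}^d$ that produces the explicit density formula. Once that is in place, every item in the proposition reduces to a compactness argument or a Gaussian moment bound.
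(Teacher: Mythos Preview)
Your proof is correct and follows essentially the same approach as the paper: both write the transition density as a coordinate-wise product, use positivity and compactness to obtain irreducibility, aperiodicity, and the small-set property, and then bound $PV$ via the Gaussian tails of the increments. The only notable difference is in the Lyapunov estimate \eqref{eq:lyapunov_compact}: the paper bounds the density from above by $2\phi/(N\sqrt{\Delta t})$ per coordinate and integrates, whereas you argue directly from the pathwise bound $|X_1^i| \le |x_i| + \sqrt{\Delta t}\,M|\nu_i|$, which is slightly cleaner and avoids the density altogether.
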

\begin{proof}[Proof of Proposition \ref{prop:small_set}]
    Note that by the skew-symmetric nature of the increments, and by Assumption \ref{ass:long_time}.4, the transition $P$ has a Lebesgue density that can be written $\gamma:=\prod_{i=1}^d \gamma_i$, where each 
    \begin{align*}
    &\gamma_i(x, x_i + z_i) \\  &=\left(g_i\left(x,\frac{z_i}{\sqrt{\Delta t}\sigma_{i,i}(x)}\right) + 1 - g_i\left(x,\frac{-z_i}{\sqrt{\Delta t}\sigma_{i,i}(x)}\right) \right) \frac{1}{\sqrt{\Delta t}\sigma_{i,i}(x)}\phi\left(\frac{z_i}{\sqrt{\Delta t}\sigma_{i,i}(x)} \right), 
    \end{align*}
    with $\phi$ being the standard Gaussian density function on $\mathbb{R}$. Under Assumption \ref{ass:long_time} each $g_i(x,\nu_i)$ is bounded away from 0 on compact sets and $\sigma_{ii}(x) \leq M$, which shows $\lambda_\text{Leb}$-irreducibility and aperiodicity.  It also implies that, by fixing a threshold value $A>0$, for any compact $C \subset \mathbb{R}^d$ there is an $\epsilon_{C} > 0$ such that for any $x \in C$
    \begin{equation*}
    \gamma_i(x ,x_i + z_i) \geq \epsilon_C \phi\left(\frac{z_i}{M}\right) \mathbb{I}_{|z_i| \leq A}
    \end{equation*}
    for all $i$ (noting also that $\phi$ is monotone on $[0,\infty)$). The standard Gaussian distribution on $\mathbb{R}^d$, constrained on $[-A,A]^d$, can therefore be taken as a minorising measure for $P$ on $C$ using the constant $\epsilon^d_C$. Since this argument holds for any compact $C$ then this shows that any compact set is small.  Finally using the skew-symmetric representation of the transition density it follows that for any $x \in C$,
    \begin{equation*}
    \int V(y)P(x,dy)  \leq \frac{2^d}{N^d \Delta t^{d/2}}\int V(y) \phi\left(\frac{y_1-x_1}{\sqrt{\Delta t}\sigma_{1,1}(x)}\right)...\phi\left(\frac{y_d-x_d}{\sqrt{\Delta t }\sigma_{d,d}(x)}\right)dy,
    \end{equation*}
    which is straightforwardly bounded above by some finite constant $K_{s,C}$ using properties of the Gaussian distribution. This completes the proof.
\end{proof}

\begin{lemma} \label{lem:1d_drift} 
Under Assumption \ref{ass:long_time}, setting $Y_i = x_i + b_i\sqrt{\Delta t}\sigma_{i,i}(x)\nu_i$ for each $i \in \{1,...,d\}$, and for any $s > 4d/\left(N\sqrt{\Delta t 2 \pi}\right)$,
\begin{equation} \label{eq:1d_drift}
    \limsup_{\|x\| \to \infty} \mathbb{E} [\exp(s|Y_i|-s|x_i|)]\mathbb{I}(|x_i|>\|x\|_\infty/2) < \frac{1}{d}.
\end{equation}
\end{lemma}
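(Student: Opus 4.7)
By the symmetry of Assumption \ref{ass:long_time}.2--3 under the reflection $x\mapsto -x$ (and $\nu_i\mapsto -\nu_i$), it suffices to treat the case $x_i>\|x\|_\infty/2$, in which case Assumption \ref{ass:long_time}.2 forces $\mu_i(x)\to -\infty$ as $\|x\|\to\infty$. The guiding intuition is that the skew-symmetric update then pushes the $i$-th coordinate almost deterministically toward the origin, so $|Y_i|-|x_i|$ concentrates on $-Z_i$ with $Z_i:=\sqrt{\Delta t}\,\sigma_{i,i}(x)|\nu_i|$, and the moment generating function is controlled by the half-normal Laplace transform $\mathbb{E}[e^{-sZ_i}]$, which is of order $1/s$ by a Mill's-ratio estimate. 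To make this precise, I would write $Y_i=x_i+\epsilon_i Z_i$ with $\epsilon_i:=\mathrm{sgn}(b_i\nu_i)\in\{\pm 1\}$; conditioning on $|\nu_i|=z$ and using Gaussian symmetry gives $\mathbb{P}(\epsilon_i=+1\mid |\nu_i|=z)=\tfrac12+h_i(x,z)$ with $h_i(x,z):=\tfrac12(g_i(x,z)-g_i(x,-z))$, and Assumption \ref{ass:long_time}.3 applied to $\nu_i=\pm z$ yields $h_i(x,z)\to -\tfrac12$ pointwise for each $z>0$.

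Next, I would decompose $\mathbb{E}[e^{s|Y_i|-s|x_i|}]=(\mathrm A)+(\mathrm B)+(\mathrm C)$ according to the three disjoint cases $\epsilon_i=+1$ (giving $|Y_i|-|x_i|=Z_i$), $\epsilon_i=-1$ with $Z_i\le x_i$ (giving $-Z_i$), and $\epsilon_i=-1$ with $Z_i>x_i$ (giving $Z_i-2x_i$). For $(\mathrm A)$, the factor $\tfrac12+h_i(x,|\nu_i|)$ tends to $0$ pointwise, and $\sigma_{i,i}\le M$ (Assumption \ref{ass:long_time}.1) gives the $x$-independent Gaussian-integrable dominator $e^{s\sqrt{\Delta t}M|\nu_i|}$, so dominated convergence yields $(\mathrm A)\to 0$. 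Piece $(\mathrm C)$ is bounded by $e^{-2sx_i}\,\mathbb{E}\bigl[e^{s\sqrt{\Delta t}M|\nu_i|}\mathbb{I}\bigl(|\nu_i|>x_i/(\sqrt{\Delta t}M)\bigr)\bigr]$; an explicit Gaussian-tail computation shows the right-hand side is $o(1)$ as $x_i\to\infty$. The dominant piece $(\mathrm B)$ satisfies, using $\sigma_{i,i}\ge N$, the $x$-uniform bound $(\mathrm B)\le\mathbb{E}[e^{-s\sqrt{\Delta t}N|\nu_i|}]=2e^{\lambda^2/2}\Phi(-\lambda)$ with $\lambda:=s\sqrt{\Delta t}N$; Mill's ratio $\Phi(-\lambda)\le\phi(\lambda)/\lambda$ then gives $(\mathrm B)\le 2/(sN\sqrt{2\pi\Delta t})$, which is strictly less than $1/(2d)$ whenever $s>4d/(N\sqrt{2\pi\Delta t})$. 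Subadditivity of $\limsup$ delivers the desired bound $\limsup<1/(2d)<1/d$.

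The principal technical obstacle is that $\sigma_{i,i}(x)$ need not converge as $\|x\|\to\infty$, so a naive pointwise limit of $e^{\pm sZ_i}$ inside the expectation is unavailable; this is sidestepped by exploiting the uniform bounds $N\le\sigma_{i,i}\le M$ to produce $x$-independent integrable dominators. The factor of $4$ in the lower bound on $s$ provides the buffer needed to absorb the (asymptotically vanishing) contributions of $(\mathrm A)$ and $(\mathrm C)$ and is the reason for the explicit constants appearing in the statement.
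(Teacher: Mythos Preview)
Your argument is correct and uses essentially the same ingredients as the paper: reduction to $x_i>\|x\|_\infty/2$ (hence $x_i\to\infty$), a dominated-convergence step with the $x$-independent Gaussian dominator $e^{s\sqrt{\Delta t}M|\nu_i|}$, the pointwise convergence of $g_i$ from Assumption~\ref{ass:long_time}.2--3, and the Mills-ratio bound $e^{\lambda^2/2}\Phi(-\lambda)\le (\lambda\sqrt{2\pi})^{-1}$ applied to the half-normal Laplace transform with $\lambda=s\sqrt{\Delta t}N$.

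The only organisational difference is the decomposition. The paper conditions on $b_i\in\{\pm1\}$, obtaining two pieces $A(x)$ and $B(x)$; after the $\limsup$ each concentrates on a half-line in $\nu_i$ and contributes $2/(\tilde s\sqrt{2\pi})$, so the sum is $4/(\tilde s\sqrt{2\pi})<1/d$. You instead condition on $\epsilon_i=\mathrm{sgn}(b_i\nu_i)$ and split off the crossover event $\{Z_i>x_i\}$, obtaining three pieces; two vanish in the limit and the surviving one is bounded by $2/(\tilde s\sqrt{2\pi})<1/(2d)$. Your parametrisation removes the symmetric double-counting at the cost of handling the crossover term $(\mathrm C)$ explicitly, and yields the slightly stronger bound $1/(2d)$; the paper's two-piece split avoids the crossover analysis but uses the full factor $4$. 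Both routes are equivalent in substance.
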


Intuitively Lemma \ref{lem:1d_drift} ensures that any coordinate far enough away from the centre of the space will tend to drift back towards it.  The constant $1/d$ ensures that the respective terms sum to less than one when summing over the coordinates.

\begin{proof}[Proof of Lemma \ref{lem:1d_drift}] 
Note that the stipulation $|x_i|>\|x\|_\infty/2$ and the equivalence of norms on $\mathbb{R}^d$ implies that we need only consider the situation in which $|x_i| \to\infty$, as in any other case the limit is 0.  We give details when $x_i \to \infty$ (and therefore consider $x_i>0$ here). The case $x_i \to - \infty$ is analogous.  Writing $Y_i = x_i + b_i \sqrt{\Delta t} \sigma_{i,i}(x) \nu_i$, we have that $\mathbb{E} [\exp(s|Y_i|-s|x_i|)\mathbb{I}(x_i>\|x\|_\infty/2)]$ is equal to
\begin{equation*}
\mathbb{E}_{\nu_i} \mathbb{E}_{b_i|\nu_i} [e^{s|x_i + b_i \sqrt{\Delta t} \sigma_{i,i}(x) \nu_i| - s|x_i|}\mathbb{I}(x_i>\|x\|_\infty/2)],
\end{equation*}
which upon further simplification can be written $A(x) + B(x)$, where
\begin{equation*}
\begin{aligned}
A(x) &:= \mathbb{E}_{\nu_i} [e^{s|x_i + \sqrt{\Delta t} \sigma_{i,i}(x) \nu_i| - s|x_i|} g_i(x, \nu_i) \mathbb{I}(x_i>\|x\|_\infty/2)]
\\
B(x) &:= \mathbb{E}_{\nu_i} [ e^{s|x_i - \sqrt{\Delta t} \sigma_{i,i}(x) \nu_i| - s|x_i|} (1 - g_i(x, \nu_i)) \mathbb{I}(x_i>\|x\|_\infty/2)].
\end{aligned}
\end{equation*}

We consider the behaviour of $A(x)$ and $B(x)$ in the limit superior.  In both cases the limit superior can be brought inside the expectation by the reverse Fatou lemma using the dominating function $e^{\sqrt{\Delta t}M s | \nu_i| }$, which is integrable since $\nu$ is Gaussian.  We have 
\begin{equation*}
\begin{aligned}
\limsup_{\|x\| \to \infty} A(x)  
&\leq  
\mathbb{E}_{\nu} [\limsup_{\|x\| \to \infty} e^{s|x_i + \sqrt{\Delta t}\sigma_{i,i}(x) \nu_i| - s|x_i|} g_i(x, \nu_i) \mathbb{I}(x_i>\|x\|_\infty/2)] 
\\
&=  
\mathbb{E}_{\nu} [e^{s\sqrt{\Delta t}\sigma_{i,i}(x) \nu_i}\limsup_{\|x\| \to \infty} g_i(x, \nu_i)\mathbb{I}(x_i>\|x\|_\infty/2)].
\end{aligned}
\end{equation*}
If $\nu_i > 0$ then $\nu_i\cdot\mu_i(x) \to -\infty$ under the restriction on $x_i$ by Assumption \ref{ass:long_time}.3, which implies that $g_i(x,\nu_i) \to 0$ by Assumption \ref{ass:long_time}.4. When $\nu_i < 0$ then by the same argument $g_i(x,\nu_i) \to 1$. Writing $\tilde{s} := s\sqrt{\Delta t} N$, it follows that
\begin{align*}
\limsup_{\|x\|\to \infty} A(x) &
\leq \frac{1}{\sqrt{2\pi}}\int_{-\infty}^0 e^{s\sqrt{\Delta t}\sigma_{i,i}(x) \nu_i - \nu_i^2/2}d\nu_i \leq 
\frac{1}{\sqrt{2\pi}}\int_{-\infty}^0 e^{\tilde{s}\nu_i - \nu_i^2/2}d\nu_i \\
&= \frac{1}{2}e^{\tilde{s}^2/2}\text{erfc}\left( \frac{\tilde{s}}{\sqrt{2}}\right) 
\leq \frac{2}{\tilde{s}\sqrt{2\pi}},
\end{align*}
where $\text{erfc}(t)$ denotes the complementary error function evaluated at $t \in \mathbb{R}$, and the last inequality is obtained using standard upper bounds from the literature (e.g. \cite{abramowitz1948handbook}).

By a similar argument, for $B(x)$ we have
\begin{equation*}
\begin{aligned}
\limsup_{\|x\|\to\infty}B(x)
%&\leq 
%\mathbb{E}_{\nu_i}[e^{-\tilde{s}\nu_i}(1-\limsup_{\|x\|\to\infty}g_i(x,\nu_i))]\mathbb{I}(x_i>\|x\|_\infty/2)
%\\
%&
\leq 
\frac{1}{\sqrt{2\pi}}\int_0^\infty e^{-\tilde{s}\nu_i - \nu_i^2/2}d\nu_i 
\leq
\frac{2}{\tilde{s}\sqrt{2\pi}}.
\end{aligned}
\end{equation*}
Combining and applying the same argument to the case $x_i \to -\infty$ gives
\begin{equation*}
\limsup_{\|x\| \to \infty}\mathbb{E} [\exp(s|Y_i|-s|x_i|)]\mathbb{I}(|x_i|>\|x\|_\infty/2) \leq \frac{4}{\tilde{s}\sqrt{2\pi}},
\end{equation*}
which will be $< 1/d$ since $s > 4d/\left(N\sqrt{\Delta t 2 \pi}\right)$. This completes the proof.
\end{proof}

\begin{proposition} \label{prop:drift_condition}
Let $s > 4d/\left(N\sqrt{\Delta t 2 \pi}\right)$ and $V(x) := \exp(s\| x \|_\infty)$. Under Assumption \ref{ass:long_time} the Markov chain produced by the $d$-dimensional skew-symmetric scheme with step-size $\Delta t$ satisfies 
\begin{equation*}
\limsup_{\| x\| \to \infty} \int \frac{V(y)}{V(x)} P(x,dy) < 1.
\end{equation*}
\end{proposition}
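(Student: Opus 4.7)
The plan is to exploit the coordinate-wise decomposition $\exp(s\|y\|_\infty) \leq \sum_{i=1}^d \exp(s|y_i|)$, which reduces control of $V(y)/V(x)$ to the sum of $d$ univariate ratios and opens the door to applying Lemma~\ref{lem:1d_drift} term by term. Specifically, I would start from
\begin{equation*}
\mathbb{E}_x\!\left[\frac{V(Y)}{V(x)}\right] \leq \sum_{i=1}^d \mathbb{E}_x\!\left[\exp\bigl(s|Y_i| - s\|x\|_\infty\bigr)\right],
\end{equation*}
where $Y_i = x_i + b_i\sqrt{\Delta t}\,\sigma_{i,i}(x)\nu_i$, and split each summand according to whether $|x_i| > \|x\|_\infty/2$ or $|x_i| \leq \|x\|_\infty/2$.

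For indices in the first class, since $|x_i|\leq \|x\|_\infty$ I can drop down to $\mathbb{E}_x[\exp(s|Y_i|-s|x_i|)]$, which is exactly what Lemma~\ref{lem:1d_drift} controls; the hypothesis $s>4d/(N\sqrt{\Delta t\,2\pi})$ then gives $\limsup_{\|x\|\to\infty}$ strictly smaller than $1/d$ for each such term. For indices with $|x_i| \leq \|x\|_\infty/2$, the trivial bound $|Y_i|\leq |x_i| + \sqrt{\Delta t}\,\sigma_{i,i}(x)|\nu_i|$ together with $\sigma_{i,i}(x)\leq M$ (Assumption~\ref{ass:long_time}.1) yields
\begin{equation*}
\mathbb{E}_x\!\left[\exp(s|Y_i| - s\|x\|_\infty)\right] \leq e^{-s\|x\|_\infty/2}\,\mathbb{E}\!\left[e^{s\sqrt{\Delta t}\,M|\nu_i|}\right],
\end{equation*}
and the second factor is a fixed finite Gaussian moment, so these terms tend to $0$ as $\|x\|\to\infty$.

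Summing the two groups and using subadditivity of $\limsup$, I would conclude
\begin{equation*}
\limsup_{\|x\|\to\infty}\mathbb{E}_x\!\left[\frac{V(Y)}{V(x)}\right] \leq \sum_{i=1}^d \limsup_{\|x\|\to\infty}\mathbb{E}_x\!\left[\exp(s|Y_i|-s|x_i|)\,\mathbb{I}\!\left(|x_i|>\|x\|_\infty/2\right)\right] < d\cdot\tfrac{1}{d}=1.
\end{equation*}
The only real subtlety, and the reason Lemma~\ref{lem:1d_drift} is stated with threshold $1/d$ rather than $1$, is that the crude union bound $\exp(s\|y\|_\infty)\leq \sum_i \exp(s|y_i|)$ introduces a factor of $d$; absorbing this factor without losing the strict inequality is precisely what the condition $s > 4d/(N\sqrt{\Delta t\,2\pi})$ is calibrated to ensure. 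Beyond this bookkeeping, the argument is routine: no coordinates' contributions interact, since the skew-symmetric increments are handled one index at a time, so no joint distributional control beyond Lemma~\ref{lem:1d_drift} is needed.
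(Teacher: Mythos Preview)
Your proposal is correct and essentially identical to the paper's own proof: the same union bound $\exp(s\|y\|_\infty)\le\sum_i\exp(s|y_i|)$, the same case split at $|x_i|\lessgtr\|x\|_\infty/2$, the Gaussian tail bound with $\sigma_{i,i}\le M$ for the small-$|x_i|$ case, and Lemma~\ref{lem:1d_drift} for the large-$|x_i|$ case.
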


\begin{proof}[Proof of Proposition \ref{prop:drift_condition}]
Let $x \in \mathbb{R}^d$ and $Y_i = x_i + b_i\sqrt{\Delta t}\sigma_{i,i}(x)\nu_i$ for $i \in\{1,...,d\}$. Since $V(y) \le \sum_{i = 1}^d \exp(s|y_i|)$, we have 
\begin{equation*}
\mathbb{E} \left[ \frac{V(Y)}{V(x)}  \right] \le \sum_{i=1}^d \mathbb{E} \left[ \exp (s | Y_i | - s \| x\|_\infty ) \right].
\end{equation*}
Let $i \in \{ 1,...,d \}$. We will consider two cases: (1) $|x_i| \le \| x\|_\infty /2 $ and (2) $\| x\|_\infty /2 < |x_i|$.  When $|x_i| \le \| x\|_\infty /2$,
\begin{equation*}
|Y_i| = |x_i + b_i \sqrt{\Delta t}\sigma_{i,i}(x)  \nu_i| \le |x_i| +  | b_i \sqrt{\Delta t} \sigma_{i,i}(x) \nu_i| \leq  \sqrt{\Delta t} M | \nu_i| +  \| x\|_\infty / 2,
\end{equation*}
which implies
\begin{align*}
&\limsup_{\| x\| \to \infty} \mathbb{E} \left[ \exp (s | Y_i | - s \| x\|_\infty ) \right]\mathbb{I}(|x_i|\leq \|x\|_\infty/2) \\
&\leq \limsup_{\| x\| \to \infty} e^{-  s\| x\|_\infty / 2}\mathbb{E} \left[ \exp (s \sqrt{\Delta t}M | \nu_i|) \right] = 0.
\end{align*}
When $\| x\|_\infty /2 < |x_i| \leq \|x\|_{\infty}$ we have 
\begin{equation*}
\mathbb{E} \left[ \exp (s | Y_i | - s \| x\|_\infty ) \right]\mathbb{I}(|x_i|> \|x\|_\infty/2) \le  \mathbb{E} \left[ \exp (s | Y_i | - s | x_i| )\right]\mathbb{I}(|x_i| > \|x\|_\infty/2).
\end{equation*}
Since $s > 4d/(N\sqrt{\Delta t 2 \pi})$, by Lemma \ref{lem:1d_drift} we get
\begin{equation*}
\limsup_{\| x\| \to \infty} \mathbb{E} \left[ \exp (s | Y_i | - s \| x\|_\infty ) \right]\mathbb{I}(|x_i| > \|x\|_\infty/2) 
< \frac{1}{d},
\end{equation*}
from which we can conclude.
\end{proof}

\begin{proof}[Proof of Theorem \ref{thm:geometric_ergodicity}]
 We prove the theorem using the drift and minorisation approach popularised in \cite{meyn2012markov}, see e.g. Chapters 15-16 of that work. The main theorem we use is also stated as Theorem 2.25 in \cite{lelievre2016partial}, and a concise proof is provided in \cite{hairer2011yet}. In Proposition \ref{prop:small_set} we establish that the Markov chain is $\lambda_{\text{Leb}}$-irreducible and aperiodic, where $\lambda_\text{Leb}$ denotes the Lebesgue measure on $\mathbb{R}^d$, and that every compact set $C \subset \mathbb{R}^d$ is small in the 
sense that there exists a Borel probability measure $\varphi$ on $\mathbb{R}^d$ and a positive constant $\alpha_C$ such that $\inf_{x \in C}P(x,\cdot) \geq \alpha_C \varphi(\cdot)$. This establishes Assumption 2.23 in \cite{lelievre2016partial} for any compact $C$.  In Proposition \ref{prop:drift_condition} (which relies on Lemma \ref{lem:1d_drift}), we establish a geometric drift condition using the Lyapunov function $V(x) := e^{s\|x\|_\infty}$ in the limit as $\|x\| \to\infty$ for suitably large $s>0$.  Combining this with \eqref{eq:lyapunov_compact}, which is proven in Proposition \ref{prop:small_set}, shows that, by choosing a large enough compact set, and a large enough $s>0$, the inequality $\int V(y) P(x,\mathrm{d}y) \leq \lambda V(x) + K$ can be established, for some $\lambda<1$ and $K>0$, and all $x \in \mathbb{R}^d$. This means that Assumption 2.24 of \cite{lelievre2016partial} is satisfied, therefore Theorem 2.25 of the same work can be applied, giving the result.
\end{proof}

\section{Proof of Theorem \ref{thm.bias.equilibrium:1}}\label{proof.bias.equilibrium:00}

\begin{proof}
We verify the assumptions of \cite[Theorem 3.3]{lelievre2016partial} from which the result follows (see also \cite{leimkuhler2016computation} and \cite{abdulle2015long} for previous similar results). In particular, $C^{\infty}_P(\mathbb{R}^d)$ will play the role of $S$ in \cite[Theorem 3.3]{lelievre2016partial}, which is dense in $L^2(\pi)$ since all moments of $\pi$ are finite due to Assumption \ref{stoltz.ass.pi:1}.

First, by Proposition \ref{semigroup.expansion:1}, equation (3.15) of \cite{lelievre2016partial} holds with $p=1$.
Then, from the regularity properties of $\mu$ and $\sigma$ (Assumptions \ref{ass.drift.general:1} and \ref{ass.volatility.general:1}) it follows that if $L$ and $\mathcal{A}_2$ are as in (\ref{generator.diffusion:1}) and (\ref{second.derivative.numerical.scheme:1}) respectively, then for any $f \in C^{\infty}_P(\mathbb{R}^d)$, we have $Lf , \mathcal{A}_2f \in C^{\infty}_P(\mathbb{R}^d)$.  Furthermore, since $\pi$ is invariant for the diffusion (\ref{multi.dimensional.sde:0}), for any $f \in C^{\infty}_P(\mathbb{R}^d)$ it holds that $\int Lf(x)\pi(\mathrm{d}x)=0$. Therefore, $C^{\infty}_{P,0}(\mathbb{R}^d)= \left\{  f \in C^{\infty}_P(\mathbb{R}^d): \int f \,\mathrm{d} \pi=0 \right\}$ is invariant under $L$.
Moreover, if $L^{\ast}$ denotes the formal $L^2(\pi)$-adjoint of $L$, then $L^{\ast}=L$ as the diffusion is time-reversible. Hence, by \cite[Theorem 1]{pardoux.veretennikov:01} together with \cite[Theorem 9.19]{gilbarg1977elliptic}, we have that $L$ is invertible on $C^{\infty}_{P,0}(\mathbb{R}^d)$.

Since $\log \pi \in C^4_P(\mathbb{R}^d)$, we get that for all $k=0,1,...,4$, if $\pi^{(k)}$ is any partial derivative of $\pi$ of order $k$, then $\pi^{(k)}/\pi$ is polynomially bounded. Then, using integration by parts, we directly calculate that for $\mathcal{A}_2^{\ast}$, the formal $L^2(\pi)$-adjoint of $\mathcal{A}_2$, and for the function ${\bf 1}: \mathbb{R}^d \rightarrow \mathbb{R}$ equal to 1 everywhere, we have 
\begin{align*}
    &\mathcal{A}_2^{\ast}{\bf 1} (x)= -\sum_{i,k=1}^d \frac{\partial_i\left[ \partial_i^{\xi}\partial_k^{\xi}\partial_k^{\xi} q_i(x,0)  \sigma_{i,i}(x)  \pi(x) \right]}{\pi(x)} +
     \\
     & - \sum_{i \neq k}\frac{\partial_i\partial_k\left[ \mu_i(x) \mu_k(x) \pi(x) \right]}{\pi(x)} -\sum_{i \neq k} 4 \frac{\partial_i \partial_k \left[ \partial_k^{\xi}q_i(x,0) \partial_i^{\xi} q_k(x,0)  \sigma_{i,i}(x)   \sigma_{k,k}(x) \pi(x) \right]}{\pi(x)} \\
     &- \sum_{i,k=1}^d \frac{\partial_i \partial_k^2 \left[ \mu_i(x) \sigma_{k,k}(x)^2 \pi(x) \right]}{2 \pi(x)} -\sum_{i,k=1}^d \frac{\partial_i^2\partial_k^2\left[ \sigma_{i.i}(x)^2  \sigma_{k,k}(x)^2 \pi(x) \right]}{8\pi(x)},
\end{align*}

implying that $\mathcal{A}_2^{\ast}{\bf 1} \in C^{\infty}_P(\mathbb{R}^d)$ by Assumptions \ref{ass.drift.general:1}, \ref{ass.volatility.general:1}, \ref{regularity.coefficients:1} and \ref{stoltz.ass.pi:1}.
Observe also that 
\begin{equation*}
\int \mathcal{A}_2^{\ast}{\bf 1} \, \mathrm{d}\pi = \langle \mathcal{A}_2^{\ast}{\bf 1} , {\bf 1} \rangle_{\pi} = \langle {\bf 1} , \mathcal{A}_2{\bf 1} \rangle_{\pi} =0,
\end{equation*}
since $\mathcal{A}_2$ is a differential operator. From this, together with the expression for $\mathcal{A}_2^{\ast}{\bf 1} (x)$, it follows that 
$
\mathcal{A}_2^{\ast}{\bf 1} \in C^{\infty}_{P,0}(\mathbb{R}^d)
$.
Finally, from Theorem \ref{thm:geometric_ergodicity}, the skew-symmetric numerical scheme admits a unique invariant probability measure $\pi_{\Delta t}$, with all moments finite, meaning that for all $n \in \mathbb{N}$,
\begin{equation*}
\int \left( 1 + \| x \|^n \right) \pi_{\Delta t}(\mathrm{d}x) < \infty.
\end{equation*}
Each assumption of \cite[Theorem 3.3]{lelievre2016partial} is therefore satisfied, and the result follows.
\end{proof}

\clearpage % optional: force new page
\section*{Supplementary Material A: Comparison between the skew-symmetric scheme with other schemes}

In this section we will present some conceptual comparisons between the skew-symmetric numerical scheme and other state-of-the-art explicit schemes for stochastic differential equations with non-Lipschitz drift.

Assume for this section that $p$ is the form of Proposition 2.1, where $F$ and $f$ are the cumulative distribution function and probability density function respectively of the standard Gaussian distribution. Then in the one dimensional setting, the skew-symmetric scheme can be written as
$X^{n+1} = X^n + \sqrt{\Delta t}\cdot \sigma(X^n) \cdot \gamma_n$,
where $\gamma_n$ is a skew-Normal random variable. With this choice, the mean increment is explicit \citep{azzalini2013skew}, given by
\begin{equation*}
\mathbb{E}[X^{n+1} - X^n] = 
\Delta t \cdot \frac{\mu(X^n)}{\sqrt{1+\Delta t \cdot \frac{\pi}{2} \cdot \frac{\mu(X^n)^2}{\sigma(X^n)^2}}}.
\end{equation*}
This allows some comparison with the tamed and adaptive Euler schemes, although it should be noted that the random variable $X^{n+1}-X^n$ is also skewed in the direction of $\mu/\sigma$ for the skew-symmetric scheme.

\subsection*{The Langevin case}

When $\mu(x) = -U'(x)$ and $\sigma(x) = \sqrt{2}$, for large values of $|U'(x)|$, we have
\begin{equation*}
\mathbb{E}[X^{n+1} - X^n] \approx -\frac{2}{\sqrt{\pi}}\cdot \sqrt{\Delta t} \cdot \frac{U'(X^n)}{|U'(X^n)|}.
\end{equation*}
This is close to the tamed Euler scheme, in which the mean increment will be approximately 
$
-(\Delta t)^{1-\alpha} \cdot U'(X^n_T) / |U'(X^n_T)|
$
under the same setting. The standard variant of taming sets $\alpha = 1$, but \cite{sabanis2013note} argues that $\alpha = 1/2$ is also natural and shows its validity. The schemes therefore appear similar, however it is worth highlighting that in the skew-symmetric case as $U'(x) \to \infty$ then $X^{n+1} - X^n$ converges to a truncated Gaussian, meaning that $\mathbb{P}(X^{n+1} - X^n < 0) \to 1$,
whereas in the tamed Euler case, in the same limit, the increment will converge in distribution to
$
N(-(\Delta t)^{1-\alpha}, \Delta t),
$
which could still be positive for fixed $\Delta t$.

\subsection*{Linear volatility}

Assume instead that $\sigma(x) = |x|$ when $|x| > \epsilon$. Then for large $|\mu(X_n)|$,
\begin{equation*}
\mathbb{E}[X^{n+1} - X^n] \approx
\sqrt{\Delta t}\cdot \sqrt{\frac{2}{\pi}} \cdot \frac{\mu(X^n)}{|\mu(X^n)|} \cdot |X^n|.
\end{equation*}
This is close to the adaptive Euler scheme with a choice of step-size
$
h(X^n) = \Delta t \cdot |X^n| / |\mu(X^n)|.
$
Similar choices are recommended in \cite{fang2020adaptive}.  To illustrate benefits over the tamed Euler scheme in this setting, consider the stochastic differential equation
$
dY_t = aY_tdt + bY_tdW_t$ with $Y_0 = x_0 > 0,
$
which has explicit solution given by
$
Y_t = x_0 \exp\left\{ at - \frac{1}{2}b^2t + bW_t\right\}
$
(see \cite{skiadas2010exact}).  Since $\mathbb{E}[e^{bW_t}] = e^{\frac{1}{2}b^2t}$, it follows that
\begin{equation*}
\mathbb{E}[Y_{\Delta t} - x_0]  = \Delta t \cdot a x_0  + O(\Delta t^2).
\end{equation*}
A single step of the skew-symmetric scheme satisfies
\begin{equation*}    
\mathbb{E}[X^1 - x_0] = \Delta t \cdot \frac{a x_0}{\sqrt{1+ \Delta t \cdot \frac{\pi}{2} \cdot \frac{a^2}{b^2} }}.
\end{equation*}
The tamed Euler scheme (denoted by $Z^1,Z^2,\dots$) satisfies
\begin{equation*}
\mathbb{E}[Z^1 - x_0] = \Delta t \cdot \frac{a x_0}{1+\Delta t^\alpha |a x_0|}
\end{equation*}
for some taming parameter $\alpha >0$.  Comparing schemes we see that to leading order in $\Delta t$,
$
\mathbb{E}[Y_{\Delta t} - x_0] / \mathbb{E}[X^1 - x_0]
$
is constant with respect to $x_0$, whereas
$
\mathbb{E}[Y_{\Delta t} - x_0] / \mathbb{E}[Z^1 - x_0]
$
grows linearly.

\begin{remark} 
In \cite{livingstone2022barker} the authors compared algorithms that can be viewed as simulating overdamped Langevin diffusions using the tamed Euler scheme, a particular version of adaptive step-size Euler and the skew-symmetric schemes (called the Barker proposal in that work), and then applying a Metropolis--Hastings correction.  It was found that the skew-symmetric scheme facilitated extremely fast convergence of adaptive tuning parameters that are employed within Metropolis--Hastings, allowing the resulting Markov chain to mix much more quickly than in the other two cases, which is desirable in the context of that work. The numerical results and accompanying discussion can be found in Section 8.3 of the supplement to \cite{livingstone2022barker}.
\end{remark}

\section*{Supplementary Material B: Illustration of weak order \& equilibrium bias
}

We illustrate the behaviour of the skew-symmetric scheme when simulating an Ornstein--Uhlenbeck process $\mathrm{d}Y_t = \theta (\mu - Y_t) \mathrm{d}t + \sigma \mathrm{d}W_t$, as compared to the Euler--Maruyama scheme and the tamed Euler approach of \cite{hutzenthaler2012strong}. Performance is assessed by considering the absolute difference between the exact solution $\mathbb{E}_x[f(Y_T)]$ and a Monte Carlo average of numerical values of $f(X_T)$. Given that each scheme is weakly first order, the logarithm of the absolute error as compared to the truth should grow approximately linearly with $\log(\Delta t)$.  We chose to compare estimates at time $T = 5$ of the simulated path using test function $f(x) = x^2$. The diffusion parameters were set to $\mu = 0$, $\theta = 1$, and $\sigma = \sqrt{2}$, with initial value for all simulations set to be $x = 1$.  Results are shown on the left-hand side of Figure \ref{fig:OU-simulation}, which indicates that all schemes behave similarly for small $\Delta t$. 

To assess equilibrium bias the right-hand side of Figure \ref{fig:OU-simulation} shows the absolute error in computing the 2nd, 4th and 6th moments of a distribution on $\mathbb{R}$ with density $\pi(x) \propto e^{-x^4/4}$. The true moments were computed using a quadrature scheme at high precision to be 0.676, 1.000 and 2.028 respectively. Approximations were computed using the skew-symmetric scheme by simulating the diffusion $\mathrm{d}Y_t = -Y_t^3\mathrm{d}t + \sqrt{2}\mathrm{d}W_t$ initialised close to equilibrium for 10,000 iterations and computing ergodic averages. In each case the error is shown to grow linearly with step-size in accordance with theory. We note that the Euler--Maruyama scheme is transient in this setting.

\begin{figure}[h!]  
\centering
  \includegraphics[width=12.3cm]{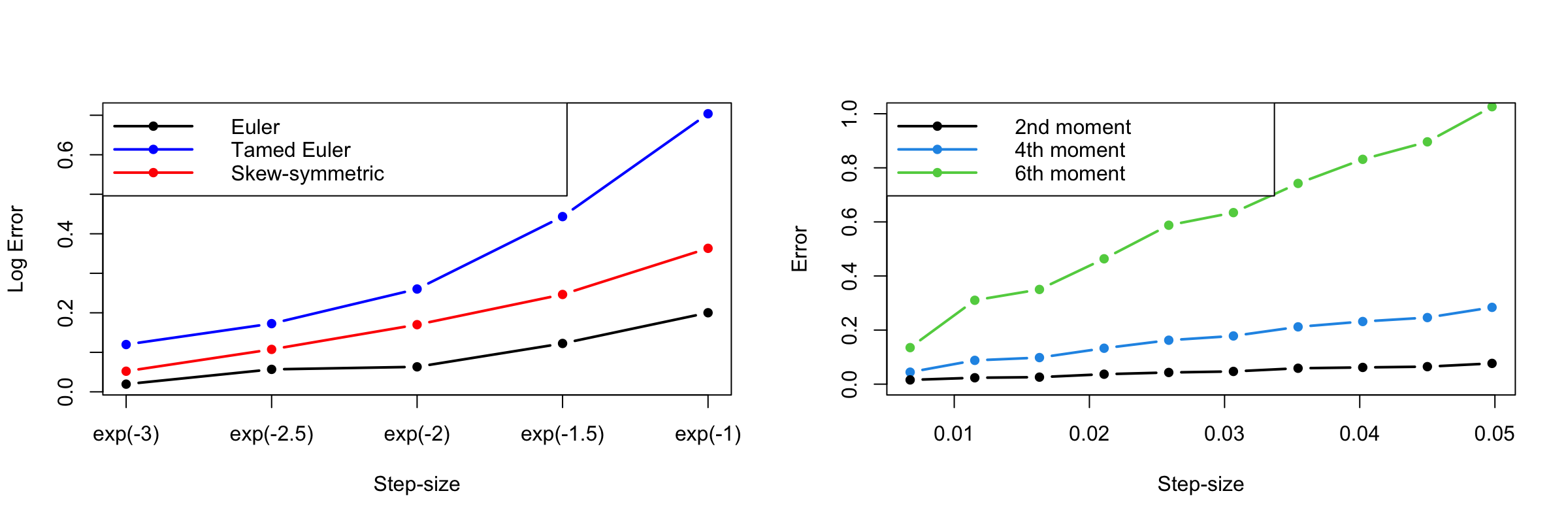}%was13cm
  \caption{Figure on the left-hand side shows logarithm of the absolute error when computing $\mathbb{E}_x[Y_T^2]$ for $T$ fixed for the Ornstein--Uhlenbeck process. Figure on the right-hand side shows absolute error when computing moments of the density $\pi(x) \propto e^{-x^4/4}$ by simulating the overdamped Langevin diffusion over long time scales.}
  \label{fig:OU-simulation}
\centering
\end{figure}

%USE THE BELOW OPTIONS IN CASE YOU NEED AUTHOR YEAR FORMAT.
%\bibliographystyle{abbrvnat}
%\bibliography{reference}

\end{document}